\documentclass[twoside]{article}
\usepackage[accepted]{aistats2022}
\usepackage[round]{natbib}

\bibliographystyle{plainnat_JA}
\setcitestyle{authoryear, open={((},close={)}}
\usepackage{algorithm}
\usepackage{algpseudocode}
\usepackage{amsmath,amssymb,amsthm}
\usepackage{hyperref}
 \hypersetup{
         colorlinks   = true,
         linkcolor    = blue,
         citecolor    = blue
 }
 \usepackage{thmtools}
 \usepackage{thm-restate}
 \newtheorem{theorem}{Theorem}
 \newtheorem{proposition}{Proposition}
 \newtheorem{lemma}{Lemma}

 \newtheorem{assumption}{Assumption}
 \theoremstyle{remark}
 \newtheorem{remark}{Remark}

\DeclareMathOperator*{\argmin}{arg\,min}

\renewcommand{\t}{^{\top}}
\newcommand{\R}{\mathbb{R}}

\newcommand{\inv}{^{-1}}
\newcommand{\tr}{\text{\normalfont{Tr}}}

\newcommand{\eps}{\ensuremath{\varepsilon}}
\newcommand{\p}{\mathbb{P}}
\newcommand{\E}{\mathbb{E}}

\newcommand{\diag}{\text{diag}}

\newcommand{\sjp}{\Sigma_{JJ}^{\perp}}


\begin{document}
\runningtitle{Entrywise Sparse PCA}
\aistatstitle{Entrywise Recovery Guarantees for Sparse PCA \\ via Sparsistent Algorithms}
\aistatsauthor{ Joshua Agterberg \And Jeremias Sulam}
\aistatsaddress{Johns Hopkins University \And Johns Hopkins University} 

\begin{abstract}
Sparse Principal Component Analysis (PCA) is a prevalent tool across a plethora of subfields of applied statistics. While several results have characterized the recovery error of the principal eigenvectors, these are typically in spectral or Frobenius norms. In this paper, we provide entrywise $\ell_{2,\infty}$ bounds for Sparse PCA under a general high-dimensional subgaussian design.  In particular, our results hold for any algorithm that selects the correct support with high probability, those that are \emph{sparsistent}.  Our bound improves upon known results by providing a finer characterization of the estimation error, and our proof uses techniques recently developed for entrywise subspace perturbation theory.
\end{abstract}

\section{INTRODUCTION}
Principal component analysis (PCA) is a standard statistical technique for dimensionality reduction of data in an unsupervised manner.  Given i.i.d mean-zero observations $X_1,\dots, X_n \in \R^p$ with covariance matrix $\Sigma \in \R^{p\times p}$, the goal of PCA is to estimate the leading $k$-dimensional subspace of $\Sigma$, which has the interpretation of representing each observation as a linear combination of \emph{principal components}, where each principal component is a direction of maximal variance.  The classical theory of PCA (e.g. \citet{anderson_introduction_2003}) shows that if the number of covariates $p$ is fixed and the number of samples $n$ tends to infinity, then the leading eigenvectors of the sample covariance approximate the leading eigenvectors of the population covariance well.


 In the modern era of big data, it is often unrealistic to assume that $p$ remains fixed in $n$.
In the seminal work of \citet{johnstone_consistency_2009}, the authors introduced the \emph{spiked covariance model} where the leading eigenvalue of the population covariance satisfies $\lambda_1 > 1$, while all other eigenvalues are all 1.   In \citet{johnstone_consistency_2009}, the authors showed that if $\hat u_1$ is the leading eigenvector of the sample covariance  and $u_1$ is the leading eigenvector of the population covariance, then $\langle \hat u_1, u_1 \rangle$ need not tend to 1 as $p$ and $n$ tend to infinity unless either $p/n \to 0$ or the leading eigenvalue $\lambda_1$ tends to infinity.  They then went on to  show that if $\lambda_1$ remains bounded away from infinity but the leading eigenvector is \emph{sparse} then a simple thresholding estimator could yield consistent estimation.  Since then, there has been much work on generalizing the model in \citet{johnstone_consistency_2009} to settings where either the leading eigenvalues tend to infinity \citep{bao_statistical_2020,cai_limiting_2020,cai_subspace_2021,fan_asymptotic_2020,yan_inference_2021} or the leading eigenvectors are sparse \citep{amini_high-dimensional_2009,daspremont_direct_2007,cai_sparse_2013,gao_sparse_2017,gataric_sparse_2020,gu_sparse_2014,lei_sparsistency_2015,ma_sparse_2013,yang_sparse_2015}.  

\onecolumn

In this paper we consider the setting where the leading eigenvalues of the covariance matrix are bounded away from zero and infinity, but the leading $k$ eigenvectors are $s$-sparse as $n$ and $p$ tend to infinity.  There have been substantial theoretical
\citep{banks_information-theoretic_2018,cai_sparse_2013,krauthgamer_semidefinite_2015,vu_minimax_2013,wang_statistical_2016} and methodological \citep{berthet_optimal_2013,chen_new_2020,gataric_sparse_2020,ma_sparse_2013,rohe_vintage_2020,xie_bayesian_2019} developments in sparse PCA.  In \citet{vu_fantope_2013} the authors propose a semidefinite program enforcing sparsity to estimate the leading eigenvectors of the population covariance given only the sample covariance, and in \citet{lei_sparsistency_2015} the authors provide general results for which the algorithm in \citet{vu_fantope_2013} selects the correct support.  Similarly, \citet{gu_sparse_2014} propose a nonconvex algorithm that selects the correct support with high probability.   

In  many of the existing theoretical results on sparse PCA, authors are primarily concerned with subspace estimation error in spectral or Frobenius norm (e.g. \citet{cai_sparse_2013,vu_fantope_2013,vu_minimax_2013}).  However, in many situations entrywise guarantees can lead to more refined results which can be useful for downstream inference.  In this paper, building upon a host of recent works on entrywise guarantees for eigenvectors \citep{abbe_ell_p_2020,abbe_entrywise_2020,agterberg_entrywise_2021,cai_subspace_2021,cape_signal-plus-noise_2019,cape_two--infinity_2019,charisopoulos_entrywise_2020,chen_spectral_2020,damle_uniform_2020,fan_ell_infty_2018,jin_estimating_2019,lei_unified_2019,mao_estimating_2020, xia_statistical_2020,xie_bayesian_2019,xie_entrywise_2021,yan_inference_2021}, we study entrywise guarantees for sparse PCA for a very general class of models. Our main results hold for any \emph{sparsistent} algorithm, i.e. one that selects the correct support for the eigenvectors, with high probability.   Sparsistency has also been studied in other contexts in high-dimensional statistics, such as in sparse linear models \citep{fan_variable_2001,wainwright_sharp_2009,zhao_model_2006}.  See \citet{buhlmann_statistics_2011} for a more comprehensive overview.

The literature on entrywise eigenvector analysis includes a suite of tools and techniques to bound the entries of eigenvectors in ways that classical matrix perturbation theory (e.g. \citet{horn_matrix_2012,g._w._stewart_matrix_1990,bhatia_matrix_1997}) fails to address.  The Davis-Kahan Theorem \citep{yu_useful_2014} provides a useful benchmark for eigenvector analysis, but this can lead to suboptimal entrywise bounds.  The primary reason for the lack of optimality is due to the fact that the Davis-Kahan Theorem can be somewhat coarse, as it fails to take into account the probabilistic nature of empirical eigenvectors in statistical settings.  Therefore, entrywise eigenvector bounds require careful probabilistic and matrix analysis techniques that go beyond what the Davis-Kahan Theorem and classical matrix perturbation theory can do. See \citet{chen_spectral_2020} for an accessible introduction to entrywise eigenvector estimation.  The only other work on entrywise eigenvector analysis in sparse PCA is in \citet{xie_bayesian_2019}, which is a Bayesian setting under the relatively stringent spiked model.  In this paper we develop entrywise bounds for sparse PCA  under a much more general model class.  More specifically, our results hold for models satisfying a mild eigengap requirement (see Assumption \ref{assumption:eigenvalues}) that includes the spiked model.  

The rest of this paper is organized as follows.  In Section \ref{sec:2} we provide the requisite background for sparse PCA and existing results on sparsistency.  In Section \ref{sec:mainresults} we provide our main results, and Section \ref{sec:discussion} includes the discussion.   We include a sketch of our main proof in Section \ref{sec:proofs}, but the full proofs are relegated to the supplementary material.  



\subsection{Notation}
We use capital letters to denote matrices and random vectors, which will be clear from context, and lower case letters to denote fixed vectors.   We let $X_1, \dots , X_n$ denote a collection of $n$ random variables in $\R^p$. For a generic real-valued random variable $X$, its $\psi_{\alpha}$ \emph{Orlicz norm of order $\alpha$} (or just $\psi_{\alpha}$ norm)  is defined via $\|X \|_{\psi_{\alpha}} := \inf\{t > 0: \E\exp( |X|^{\alpha}/t) \leq 1\}$.  Random variables with finite $\psi_2$ norm are called \emph{subgaussian} and random variables with finite $\psi_1$ norm are called \emph{subexponential}.  More discussion on Orlicz norms is included in Appendix \ref{sec:bernstein} in the supplementary material. 

For $d_1 \geq d_2$, we define the set of matrices $U \in \R^{d_1 \times d_2}$ with orthonormal columns as $\mathbb{O}(d_1,d_2)$ and when $d = d_1 =d_2$, we denote this set as $\mathbb{O}(d)$.  We use $\| \cdot \|$ as the spectral norm on matrices and the Euclidean norm on vectors, $\| \cdot \|_F$ as the Frobenius norm, and $\|\cdot \|_{\max}$ for the maximum entry norm.  Except for the spectral norm, we write $\| \cdot \|_{p \to q}$ as the operator norm from $\ell_p \to \ell_q$; that is $\| M\|_{p\to q} := \sup_{\|x\|_p=1} \| Mx \|_q$.  Of  particular importance is the $2\to\infty$ norm, which is the maximum row norm of a matrix. Except for the maximum entry norm, we write $\|\cdot\|_{p}$ to denote the entrywise $p$ norm of a matrix viewed as a long vector.  For a matrix $M$, $\diag(M)$ extracts its diagonal, and $\tr(M)$ is its trace.  For two symmetric matrices $A$ and $B$, we write $A \succcurlyeq B$ if $A - B$ is positive semidefinite. For a matrix $M$, $M_{j\cdot}$ and $M_{\cdot i}$ denote its $j$'th row and $i$'th column respectively.  For a collection of indices $J$, $M_{JJ}$ denotes the principal submatrix of $M$ found by taking its columns and rows corresponding to indices in $J$, and for a vector $x$, $x[J]$ denotes the components of $x$ corresponding to indices in $J$.  For a matrix $M$, the operator $\text{supp}(M)$ denotes its support, i.e. the indices corresponding to nonzero components in $M$.  We denote the \emph{reduced condition number} of $\Sigma$ (with respect to the dimension $k$) as $\kappa := \frac{\lambda_1}{\lambda_k}$.

For two functions $f(n)$ and $g(n)$, we write $f(n) \lesssim g(n)$ or $f(n) = O(g(n))$ if there exists a constant $C$ such that $f(n) \leq C g(n)$ for all $n$ sufficiently large, and we write $f(n) \ll g(n)$ or $f(n) = o(g(n))$ if $f(n)/g(n) \to 0$ as $n \to \infty$.  In the proofs, a generic constant $C$ may change from line to line.

\section{SPARSE PCA AND SPARSISTENCY} \label{sec:2}
Suppose $\{X_i\}_{i=1}^{n} \in \R^p$ are mean-zero random variables with covariance matrix $\Sigma$ and eigenvalues $\lambda_1 \geq  \cdots \geq \lambda_p \geq 0$. 
Define the empirical covariance $\hat \Sigma := \frac{1}{n} \sum_{i=1}^{n} X_i X_i\t$, which is just the usual method of moments estimator.  We assume that $\Sigma$ has a \emph{sparse} $k$-dimensional leading subspace, meaning that its leading $k$ eigenvectors are $s$-sparse,  in the sense that 
there is a set  $J \subset \{1, \dots, p\}$ with cardinality at most $s$, with each eigenvector's nonzero support restricted to indices in $J$.  In the language of \citet{vu_minimax_2013}, this setting refers to \emph{row}-sparsity (as opposed to \emph{column}-sparsity).   See \citet{vu_minimax_2013} for a comparison.  We denote the $p \times k$ matrix $U$ as the matrix of $k$ orthonormal eigenvectors of $\Sigma$.   Since $U$ is assumed row-sparse, it has at most $s$ nonzero \emph{rows}.   Concretely, this means that the nonzero support of each column of $U$ is restricted to rows with indices in $J$.  A useful interpretation of the set $J$ is that it corresponds to the subset of covariates that contribute to the directions of maximum variance.  In order for $\Sigma$  to have a well-defined (sparse) leading $k$-dimensional subspace, it must have an eigengap, meaning that $\lambda_k - \lambda_{k+1} > 0$. In Section \ref{sec:mainresults}, Assumption \ref{assumption:eigenvalues} offers a slightly more quantitative condition on this eigengap.  

The \emph{sparse PCA problem} consists of estimating the matrix $U$ from the observations $\{X_i\}_{i=1}^{n}$.  There have been a number of approaches, including, but not limited to semidefinite programming \citet{amini_high-dimensional_2009,daspremont_direct_2007}, Fantope Projection and Selection algorithm \citep{vu_fantope_2013,lei_sparsistency_2015}, nonconvex approaches \citep{gu_sparse_2014}, Bayesian approaches \citep{xie_bayesian_2019}, amongst others \citep{gataric_sparse_2020,chen_new_2020,wang_tighten_2014,ma_sparse_2013}.  In this paper we consider any algorithm that selects the correct support with high probability (see Assumption \ref{assumption:sparsistency}) in an asymptotic regime where $k \ll 
s \ll n \lesssim p$.  From a practical standpoint, it is useful to consider the regime where $k$ stays fixed but $s$ tends to infinity as $n$ and $p$ at a rate $s = o(n)$.  This regime is similar to that studied in the literature on high-dimensional sparse linear models, where one assumes that the coefficients are $s$-sparse with $s \ll n$.  While it is possible to use analogous techniques to those in sparse linear models to study sparse PCA (e.g. \citet{jankova_-biased_2021}), the unsupervised problem of sparse PCA is markedly distinct from the \emph{supervised} setting of sparse linear regression, and often requires additional considerations.


Note that if $\Pi$ is a permutation matrix, then $\Pi \Sigma \Pi\t (\Pi U) = \Pi \Sigma U = \Pi U \Lambda$, where $\Lambda$ is the $k \times k$ diagonal matrix of leading eigenvalues of $\Sigma$.  This shows that $\Pi U$ are eigenvectors of $\Pi \Sigma \Pi\t$.  Therefore, given the set of nonzero indices $J$, without loss of generality, we can assume $J = \{1, \dots , s\}$ by permuting $\Sigma$ if necessary.  We can partition $\Sigma$ via
\begin{align*}
    \Sigma := \begin{pmatrix}
    \Sigma_{JJ} & \Sigma_{JJ^c} \\ \Sigma_{JJ^c}\t & \Sigma_{J^c J^c}
    \end{pmatrix};
\end{align*}
a similar partition holds for $\hat \Sigma$ and $U$. 
Under the assumption that the leading eigenvectors of $\Sigma$ are sparse, we have from the eigenvector equation that
\begin{align*}
     \Sigma  U 
     &= \begin{pmatrix}
    \Sigma_{JJ} & \Sigma_{JJ^c} \\ \Sigma_{JJ^c}\t & \Sigma_{J^c J^c}
    \end{pmatrix} \begin{pmatrix} U_J \\ 0  \end{pmatrix} = \begin{pmatrix} \Sigma_{JJ} U_J \\ \Sigma_{JJ^c}\t U_J
    \end{pmatrix} = \begin{pmatrix} U_J \\ 0 \end{pmatrix} \Lambda
\end{align*}
which shows also that $U_J$ is orthogonal to the matrix $\Sigma_{JJ^c}\t$ and that the leading $k$ eigenvectors and eigenvalues of $\Sigma_{JJ}$ are exactly the leading $k$ eigenvectors of $\Sigma$ with the zeros removed.  

An important property of any sparse PCA algorithm is identifying the support $J$ with high probability.  Suppose $\hat U$ is any estimator for $U$ (or, equivalently, $\widehat{UU\t}$ is any estimator for $UU\t$).  In this work we consider a ``debiased'' version of sparse PCA under the assumption that $\hat U$ and $U$ contain the same set of nonzero components, which implies that the estimator $\hat U$ equivalently estimates the support $J$.  We defer the particular details of this assumption to Assumption \ref{assumption:sparsistency}. Our estimator is then defined as the following modification on any sparsistent algorithm: given any set $J$, let $\tilde U_J$ be the $s \times k$ matrix of eigenvectors of the principal submatrix $\hat \Sigma_{JJ}$, and define $\tilde U := \begin{pmatrix} \tilde U_J \\ 0 \end{pmatrix}$.  
If the algorithm is sparsistent, then the correct set $J$ will be selected with high probability. In this way, the particular choice of sparse PCA algorithm can be viewed as a variable selection procedure as opposed to an estimation procedure. The full procedure is presented in Algorithm \ref{alg:spca}. 

\begin{algorithm}[t]
\caption{}
\label{alg:spca}
\begin{algorithmic}[1]
\Require Sparsistent sparse PCA algorithm \texttt{SparsePCA}, empirical covariance matrix $\hat \Sigma$\;
\State Run \texttt{SparsePCA} algorithm on $\hat \Sigma$, obtaining support set estimate $\hat J \subset \{1, \dots, p\}$.\;
\State Define $\tilde U_{\hat{J}}$ as the leading $k$ eigenvectors of $\hat \Sigma_{\hat J\hat J}$.\;
\Return Full matrix $\tilde U$, where \begin{align*}
    \tilde U_{i\cdot} = \begin{cases} (\tilde U_{\hat J})_{i\cdot} & i \in \hat J \\ 0 & i \notin \hat J \end{cases}
\end{align*}
\end{algorithmic}
\end{algorithm}

A natural question is whether sparsistent algorithms for sparse PCA exist. The answer is positive:  in Theorem 1 of \citet{lei_sparsistency_2015}, the authors provide deterministic conditions on $\Sigma$ guaranteeing that the Fantope Projection and Selection estimator  is unique and has support set $J$ with probability at least $1 - O(p^{-2})$ when $s\sqrt{\frac{\log(p)}{n}} \to 0$.  Their conditions require an error bound on $\|\hat \Sigma - \Sigma \|_{\max}$ as well as conditions on the magnitudes of the eigengaps and entries of the projection matrices.   Similarly, \citet{gu_sparse_2014} provide general conditions on $\Sigma$ (in terms of the magnitudes of the entries) so that their (nonconvex) algorithm obtains the support set $J$ with probability at least $1 - O(n^{-2})$ when $\frac{sk\log(p)}{n} \to 0$. In general, sparsistency is a property of an algorithm, and the particular structure of $\Sigma$ must be taken into account.  Therefore, our results will hold for general matrices $\Sigma$ with only mild conditions,  and can be coupled with additional structural assumptions and algorithms to yield improved recovery guarantees.

\section{MAIN RESULTS} \label{sec:mainresults}
In order to state our main results, we need a few assumptions.  Our main results will be stated for large $n$ with $p,s$ and $k$ functions of $n$.  We have the following assumption on the dimensions.

\begin{assumption}[Sample Size and Dimension]\label{assumption:dimensions}
The sample size $n$ and dimension $p$ satisfy
\begin{align*}
    s\log(p) \ll n; \qquad k \ll s.
\end{align*}
\end{assumption}
The assumption that $s\log(p) \ll n$ is weaker than the assumption $s \lesssim \sqrt{n/\log(p)}$ as is the condition in \citet{lei_sparsistency_2015} for sparsistency.  However, this still allows $p/n \to \infty$; e.g. $p = n^{c}$ for any $c \geq 1$.  The second condition $k \ll s$ is not explicitly required, but it does rule out the degenerate case $k = O(s)$, since $k \leq s$ by definition.   In many works $k = 1$ (e.g. \citet{amini_high-dimensional_2009,elsener_sparse_2019,jankova_-biased_2021}).  

The next assumption imposes the condition that whatever variable selection procedure we use selects the correct support set $J$ with high probability.

\begin{assumption}[Sparsistency]\label{assumption:sparsistency}
The algorithm is \emph{sparsistent}, meaning that with probability $1 -\delta$ the correct set $J$ is chosen.  
\end{assumption}

Note that Theorem 1 of \citet{lei_sparsistency_2015} provides sufficient conditions for  Assumption \ref{assumption:sparsistency} to hold, as does Theorem 1 of \citet{gu_sparse_2014}.   In general, this assumption is the hardest 
to check as it depends on the particular variable selection algorithm.  In \citet{lei_sparsistency_2015}, the authors show that $\delta = O(p^{-2})$ when $s\sqrt{\frac{\log(p)}{n}}\to 0$ (in addition to some other conditions omitted here).  Similarly, \citet{gu_sparse_2014} show that $\delta = O(n^{-2})$ when $\frac{s\log(p)}{n} \to 0$ (in addition to other conditions omitted here).  Typically the other conditions include some ``signal-strength'' requirements, such as the magnitudes of the entries of $\Sigma$ being sufficiently large.  The particular details for these requirements can be found in \citet{lei_sparsistency_2015} and \citet{gu_sparse_2014} respectively.  

The following assumption imposes general tail conditions on the distribution of the observations $X_1, \dots ,X_n$.



\begin{assumption}[Randomness]\label{assumption:randomness}
The variables $X_i$ are mean zero and satisfy $X_i = \Sigma^{1/2} Y_i$ for independent random variables $Y_i$ with independent coordinates with unit variance.  Furthermore, the $\psi_2$ norm of each coordinate $Y_{ij}$ satisfies $\| Y_{ij} \|_{\psi_2} = 1$ .
\end{assumption}

This assumption says that the $X_i$'s are linear combinations of $Y_i$'s whose entries are independent.  In general, assuming that each observation is a linear combination of independent random variables is a little stringent, but still common in the random matrix theory literature (e.g. \citet{el_karoui_spectrum_2010,knowles_anisotropic_2017,bao_statistical_2020,ding_spiked_2021,yang_edge_2019,yang_linear_2020}).  While a more general result may be possible, Assumption \ref{assumption:randomness} includes the setting that the $Y_i$'s are i.i.d. Gaussians with identity covariance.  

The following assumption imposes a quantitative condition on the eigengap (note that the existence of an eigengap is required for identifiability).


\begin{assumption}[Eigenvalues]\label{assumption:eigenvalues}
The top eigenvalues of $\Sigma$ satisfy
\begin{align*}
    C \lambda_1\bigg( \sqrt{ \frac{s}{n} } + \sqrt{\frac{\log(p)}{n}} \bigg)+ \frac{\lambda_{k+1}}{8} \leq \frac{\lambda_k}{8}
\end{align*}
for some sufficiently large constant $C$. In addition, for all $p$, we have that $2\lambda_{k+1} < (1-\eps)\lambda_k$ for some $\eps > \frac{1}{64}$.
\end{assumption}

The requirement $\eps > \frac{1}{64}$ is somewhat arbitrary and can be relaxed in general to any constant strictly greater than zero.  The other part of the assumption is required to obtain enough signal on the top $k$ eigenvalues of $\Sigma$, and hence $\Sigma_{JJ}$.  Furthermore, in light of Lemma \ref{lem:spectral_bound} (our principal submatrix concentration bound), this ensures that the top $k$ eigenvalues of $\hat \Sigma_{JJ}$ ``track" those of $\Sigma_{JJ}$.  In lieu of stronger assumptions, such as in a spiked model, this is the minimum requirement to guarantee that leading eigenvectors of $\hat \Sigma_{JJ}$ are well-defined.  

The main results will be stated in terms of the $2\to\infty$ norm of the difference of two matrices.  Recall that for a matrix $M \in \R^{p\times k}$, we have that
\begin{align*}
    \| M \|_{2\to\infty} &= \max_{1\leq i \leq p} \| M_{i\cdot} \|_2;
\end{align*}
that is, $\|M\|_{2\to\infty}$ is the maximum (Euclidean) row norm of the matrix $M$.  Moreover, the $2\to\infty$ norm has some attractive geometrical properties; for example, for two matrices $A$ and $B$, we have that $\| AB \|_{2\to\infty} \leq \| A \|_{2\to\infty} \|B\|$. More discussion on these relationships can be found in \citet{cape_two--infinity_2019}.

The following assumption concerns the \emph{incoherence} of the matrix $U$, which is defined as $\|U\|_{2\to\infty}$. This assumption is only included to ease interpretation and is not explicitly required. A more general -- albeit more complicated -- result is provided in the supplementary material.  

 \begin{assumption}[Incoherence and Conditioning]\label{assumption:clarity}
Suppose $\|U\|_{2\to\infty} \lesssim \big(\frac{k}{s}\big)^{1/2}$, that $k \lesssim \sqrt{s}$, and that the eigenvalues satisfy
\begin{align*}
    \lambda_{k+1} \leq \frac{\lambda}{2} < \lambda \leq \lambda_k \leq \lambda_1 \leq \kappa \lambda
\end{align*}
for some parameters $\kappa$ and $\lambda$. 
\end{assumption}

The requirement $k \lesssim \sqrt{s}$ is only needed to simplify terms. 
The incoherence assumption states that the matrix $\Sigma_{JJ}$ is incoherent in the usual sense. In this paper we do not worry about the particular incoherence constant as long as it is $O(1)$, whereas in the matrix completion literature  \citep{candes_matrix_2010,candes_power_2010,chen_noisy_2020,chen_inference_2019} one often studies the precise dependence on the incoherence constant. If one desires a more refined understanding of incoherence, our more general result in the supplementary material shows how our upper bound depends explicitly on the incoherence of $U$.   
  
In addition, Assumption \ref{assumption:clarity} should not be confused with Assumption \ref{assumption:eigenvalues} on the eigengap.
The parameter $\kappa$ is the \emph{reduced condition number} of the leading $k$-dimensional subspace of $\Sigma$, and can be much smaller than the usual (full) condition number of $\Sigma$, especially when the leading $k$ eigenvalues are of comparable order (or ``spiked'') relative to the bottom $p - k$ eigenvalues.
Assumption \ref{assumption:eigenvalues} in fact implies an upper bound on $\kappa$ of order at most $\sqrt{n/(s\log(p))}$, but it is useful to think of the setting that $\kappa = O(1)$, which corresponds to the case where the leading $k$ eigenvalues are of comparable order.  
In the setting that the eigenvalues are uniformly bounded away from zero and infinity, this assumption is not particularly strong; moreover, if the leading $k$ eigenvalues grow sufficiently fast as a function of $n$ and $p$, then the leading $k$ eigenvectors are consistent without additional assumptions.  Consequently, the primary technical condition in Assumption \ref{assumption:clarity} is on the incoherence, i.e. $\|U\|_{2\to\infty} \lesssim \big( \frac{k}{s} \big)^{1/2}$.

Before stating the main theorem, we will require some notions from subspace perturbation theory \citep{bhatia_matrix_1997,g._w._stewart_matrix_1990}. For $V,V' \in \mathbb{O}(p,k)$, the quantity
\begin{align}
    d_F(V, V') = \inf_{W \in \mathbb{O}(k)} \| V- V' W\|_F \label{frob}
\end{align}
defines a metric on $k$-dimensional subspaces invariant to choice of basis. 
Therefore, by analogy, one might wish to study the quantity
\begin{align}
    d_{2\to\infty}(V,V') := \inf_{W \in \mathbb{O}(k)} \| V - V'W\|_{2\to\infty}. \label{twoinf}
\end{align}
Unfortunately, for fixed $V,V'$, one cannot necessarily compute the minimizer in \eqref{twoinf} in closed form.  However, for fixed $V,V'$ the minimizer of \eqref{frob} 
is attained using the singular value decomposition of $V\t V'$.  That is, let $W_1 D W_2\t$ be the singular value decomposition of $V\t V'$.  Then the minimizer of  \eqref{frob}, denoted $W_*$, satisfes $W_*: = W_1 W_2\t$. 
In addition,
\begin{align*}
    d_{2\to\infty}(V,V') &\leq \| V - V' W_*\|_{2\to\infty}.
\end{align*}
Therefore, the results will be stated in terms of the \emph{existence} of an orthogonal matrix $W_* \in \mathbb{O}(k)$ that provides an upper bound for the $2\to\infty$ distance.   In the proof, we show that $W_*$ is actually a specific Frobenius-optimal orthogonal matrix. For convenience, we also include more information on subspace distances in  the supplementary material (Appendix \ref{sec:bernstein}).

%

We are now prepared to state our main result. 

\begin{theorem}\label{cor}
Suppose Assumptions \ref{assumption:dimensions}, \ref{assumption:sparsistency}, \ref{assumption:randomness}, \ref{assumption:eigenvalues}, and \ref{assumption:clarity} are satisfied, and let $\tilde U$ be the output of Algorithm \ref{alg:spca}.  Then with probability at least $1- \delta - p^{-2}$, there exists an orthogonal matrix $W_* \in \mathbb{O}(k)$ such that 
\begin{align*}
   \max_{1\leq i \leq n} \| \tilde U_{i\cdot} - (U W_*)_{i\cdot} \| 
   &\lesssim   \kappa^2\sqrt{\frac{k\log(p)}{n}} + 
   \kappa^3 \frac{s\log(p)}{n}.
\end{align*}
Consequently, if $\kappa = O(1)$, then
\begin{align*}
   \max_{1\leq i \leq n} \| \tilde U_{i\cdot} - (U W_*)_{i\cdot} \| 
   &\lesssim   \sqrt{\frac{k\log(p)}{n}}+ \frac{s\log(p)}{n}.
\end{align*}

\end{theorem}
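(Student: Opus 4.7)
The plan is to use Assumption \ref{assumption:sparsistency} to reduce to an eigenvector perturbation problem on the $s\times s$ principal submatrix $\hat\Sigma_{JJ}$, and then apply an entrywise perturbation analysis in the spirit of recent entrywise eigenvector work. Conditioning on the sparsistency event $\{\hat J = J\}$ (which occurs with probability at least $1-\delta$), both $\tilde U$ and $U$ are supported on $J$, so rows outside $J$ contribute zero to the $2\to\infty$ norm and the quantity of interest becomes $\|\tilde U_J - U_J W_*\|_{2\to\infty}$. By the block identity derived in the text, $U_J$ is an orthonormal basis for the top-$k$ eigenspace of $\Sigma_{JJ}$ with the same eigenvalues $\Lambda$ as those of $\Sigma$.

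Next I would collect two probabilistic ingredients. First, a spectral bound $\|\hat\Sigma_{JJ}-\Sigma_{JJ}\| \lesssim \lambda_1(\sqrt{s/n}+\sqrt{\log(p)/n})$, which combined with Assumption \ref{assumption:eigenvalues} and Davis--Kahan gives $\|\tilde U_J - U_J W_*\| \lesssim \kappa(\sqrt{s/n}+\sqrt{\log(p)/n})$ and the fact that $\tilde\Lambda$ tracks $\Lambda$ up to the same relative error. Second, a row-wise bound $\|(\hat\Sigma_{JJ}-\Sigma_{JJ})U_J\|_{2\to\infty} \lesssim \lambda_1\sqrt{k\log(p)/n}$, proved by applying Bernstein's inequality to each of the $sk$ coordinates of the $s\times k$ matrix $(\hat\Sigma_{JJ}-\Sigma_{JJ})U_J$ (each coordinate is an average of products of subgaussian variables under Assumption \ref{assumption:randomness}) followed by a union bound over the $s$ rows.

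Starting from the eigen-identity $\tilde U_J = \hat\Sigma_{JJ}\tilde U_J \tilde\Lambda^{-1}$ and inserting $\Sigma_{JJ} = U_J \Lambda U_J\t + \sjp$ where $\sjp := \Sigma_{JJ} - U_J\Lambda U_J\t$, I would derive a four-term decomposition of $\tilde U_J - U_J W_*$ consisting of: a linear error $(\hat\Sigma_{JJ}-\Sigma_{JJ})U_J W_*\tilde\Lambda^{-1}$; a second-order cross term $(\hat\Sigma_{JJ}-\Sigma_{JJ})(\tilde U_J - U_J W_*)\tilde\Lambda^{-1}$; an alignment term $U_J(\Lambda U_J\t \tilde U_J \tilde\Lambda^{-1} - W_*)$; and a tail residual $\sjp \tilde U_J \tilde\Lambda^{-1}$. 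Taking $\|\cdot\|_{2\to\infty}$ with $W_*$ the Frobenius-optimal alignment, ingredient (b) controls the first term and yields the dominant $\kappa^2\sqrt{k\log(p)/n}$ rate (with one $\kappa$ from $\lambda_k^{-1}\cdot\lambda_1$ and another from $\tilde\Lambda^{-1}\Lambda$); the alignment term is quadratic in $\|\tilde U_J - U_J W_*\|$ by standard $\sin\Theta$ manipulations and, after multiplication by the incoherence $\|U_J\|_{2\to\infty}\lesssim\sqrt{k/s}$, is lower-order; and the tail residual is controlled by $\lambda_{k+1}/\lambda_k$ times $\|\tilde U_J\|_{2\to\infty}$, which Assumption \ref{assumption:eigenvalues} keeps subdominant.

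The principal obstacle is the second-order cross term: the naive bound $\|\hat\Sigma_{JJ}-\Sigma_{JJ}\|\cdot\|\tilde U_J - U_J W_*\|_{2\to\infty}$ produces a circular inequality because it contains the very quantity we wish to bound. To break this circularity, I would deploy a leave-one-out device, defining for each row index $m\in J$ an auxiliary $\tilde U_J^{(m)}$ computed from a modified sample covariance in which the contribution involving the $m$-th coordinate is replaced by an independent copy, which is valid precisely because Assumption \ref{assumption:randomness} provides coordinate-wise independence of the $Y_i$. Then the $m$-th row of $\hat\Sigma_{JJ}-\Sigma_{JJ}$ is independent of $\tilde U_J^{(m)}$, so Bernstein applies directly to $(\hat\Sigma_{JJ}-\Sigma_{JJ})_{m\cdot}\tilde U_J^{(m)}$; combining this with a spectral perturbation estimate on $\|\tilde U_J - \tilde U_J^{(m)}\|$ closes the loop and produces the residual $\kappa^3 s\log(p)/n$ in the stated rate.
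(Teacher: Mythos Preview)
Your overall scaffolding --- condition on sparsistency, reduce to $\hat\Sigma_{JJ}$ versus $\Sigma_{JJ}$, use a spectral bound plus Davis--Kahan, and expand $\tilde U_J - U_J W_*$ via the eigen-identity --- matches the paper's first moves. The four-term decomposition you write down is essentially the paper's $T_1,T_2,T_3,T_4$ in different packaging.

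The genuine gap is in your treatment of the tail residual $\sjp\,\tilde U_J\,\tilde\Lambda^{-1}$. The bound you assert, $(\lambda_{k+1}/\lambda_k)\,\|\tilde U_J\|_{2\to\infty}$, is false: the $2\to\infty$ norm does not satisfy $\|AB\|_{2\to\infty}\le\|A\|\,\|B\|_{2\to\infty}$, so you cannot pull $\|\sjp\|\le\lambda_{k+1}$ through. Using $\sjp U_J=0$ you get $\sjp\tilde U_J=\sjp(\tilde U_J-U_JW_*)$, but the best naive estimate is then $\|\sjp\|_{2\to\infty}\|\sin\Theta(\tilde U_J,U_J)\|\lesssim\lambda_{k+1}\cdot\kappa\sqrt{s\log(p)/n}$, giving a contribution $\kappa\sqrt{s\log(p)/n}$ after dividing by $\tilde\lambda_k$. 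In the regime $s\log(p)\ll n$ of Assumption~\ref{assumption:dimensions} with $k\ll s$, this term satisfies $\sqrt{s\log(p)/n}\gg\sqrt{k\log(p)/n}$ and $\sqrt{s\log(p)/n}\gg s\log(p)/n$, so it strictly dominates the target rate and cannot be declared ``subdominant.'' Nor can you absorb it by self-bounding: $\|\sjp(\tilde U_J-U_JW_*)\|_{2\to\infty}$ is not bounded by a constant less than one times $\|\tilde U_J-U_JW_*\|_{2\to\infty}$. The paper's Lemma~\ref{lem:perp_bound} handles exactly this term (their $J_2=U_\perp\Lambda_\perp U_\perp^\top\tilde U_J$) by a Neumann-type matrix series expansion for $\tilde U_J$, analyzing each power separately and exploiting that $E U_J=(\hat\Sigma_{JJ}-\Sigma_{JJ})U_J$ is mean-zero even though $E=\hat\Sigma_{JJ}-U_J\Lambda U_J^\top$ is not; this is what produces the $\kappa^2\sqrt{k\log(p)/n}+\kappa^3 s\log(p)/n$ rate.

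On the cross term, your leave-one-out proposal is a genuinely different route from the paper, which instead writes $(\hat\Sigma_{JJ}-\Sigma_{JJ})\tilde U_J$ in terms of the block structure of $\hat\Sigma_{JJ}=\frac{1}{n}(\Sigma^{1/2}Y^\top Y\Sigma^{1/2})_{JJ}$ and splits $\tilde U_J=U_J(U_J^\top\tilde U_J)+U_\perp(U_\perp^\top\tilde U_J)$, applying Hanson--Wright and Bernstein to the deterministic-$U_J$ pieces and paying $\|\sin\Theta\|$ on the $U_\perp$ pieces (Lemmas~\ref{lem:K1}--\ref{lem:K3}). Your leave-one-out idea could in principle work, but be careful: under Assumption~\ref{assumption:randomness}, the $m$-th row of $\hat\Sigma_{JJ}-\Sigma_{JJ}$ depends on all coordinates of $Y$ through $X_{im}=(\Sigma^{1/2})_{m\cdot}Y_i$, so ``replacing the $m$-th coordinate by an independent copy'' does not immediately decouple that row from $\tilde U_J^{(m)}$; you would need to specify the surrogate more carefully (e.g.\ zeroing the $m$-th row and column of $\hat\Sigma_{JJ}-\Sigma_{JJ}$) and verify the perturbation $\|\tilde U_J-\tilde U_J^{(m)}\|$ is small enough.
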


As a brief remark, the dependence on the reduced condition number $\kappa$ here may be suboptimal and could potentially be improved -- we believe this is primarily an artifact of our proof technique and not a fundamental requirement. Recall that in the regime that the eigenvalues are bounded away from zero and infinity, when the leading $k$ eigenvalues are of comparable order, it holds that $\kappa = O(1)$.

Note that by taking $\delta = O(p^{-2})$ and the conditions in \citet{lei_sparsistency_2015} needed for sparsistency, the above bound holds with probability at least $1 - O(p^{-2})$; similarly, under the conditions needed for sparsistency in \citet{gu_sparse_2014}, one has $\delta = O(n^{-2})$, in which case the bound holds with probability at least $1 - O(n^{-2})$.


\section{DISCUSSION}
\label{sec:discussion}

In the regime that the eigenvalues are uniformly bounded away from zero and infinity in $n$, then Theorem \ref{cor} shows that we have the error rate
\begin{align*}
   \max_{1\leq i \leq n} \| \tilde U_{i\cdot} - (U W_*)_{i\cdot} \| &\lesssim    \max\bigg( \sqrt{\frac{k\log(p)}{n}}, \frac{s\log(p)}{n}\bigg).  
\end{align*}
In contrast, under the same conditions, in Frobenius norm, it has been shown in \citet{cai_sparse_2013} that the minimax rate satisfies
\begin{align*}
 \| \tilde U- U W_* \|_{F} &\lesssim  \sqrt{\frac{s\log(p)}{n}},
\end{align*}
so Theorem \ref{cor} improves upon this.  Moreover,  our result improves greatly upon the Frobenius norm bound in \cite{vu_fantope_2013}, as well as the Frobenius minimax rates studied in \citet{cai_sparse_2013} and \citet{vu_minimax_2013}. 
To the best of our knowledge, this is the first $2\to\infty$ guarantee for sparse PCA under a generic sparsistency requirement.  A similar result was found in \citet{xie_bayesian_2019} for spiked sparse covariance matrices, but here the only assumption on the spike is Assumption \ref{assumption:eigenvalues}, which is a much weaker assumption.

Our bounds can also be compared to the spiked covariance matrix setting $\Sigma = U \Lambda U\t + \sigma^2 I$, where $U$ is no longer sparse but $\lambda_k \to \infty$ in $n$ and $p$. In this setting the eigenvectors $\hat U$ of $\hat \Sigma$ are consistent in the following sense.  Define the \emph{effective rank} $\mathfrak{r}(\Sigma) := \frac{\tr(\Sigma)}{\lambda_1}$.  Theorem 1 of \citet{cape_two--infinity_2019} (see also \citet{yan_inference_2021} and \citet{cai_subspace_2021}) shows that if $\lambda_1 \gtrsim d/k$, $\mathfrak{r}(\Sigma) = o(n)$, $\kappa = O(1)$, and $\lambda_k - \sigma^2 \gtrsim \lambda_k$, then 
\begin{align*}
    \max_{1\leq i \leq n} \| \hat U_{i\cdot} - (U W_*)_{i\cdot} \| &\lesssim \sqrt{\frac{\max\{\mathfrak{r}(\Sigma), \log(d)\}}{n}} \sqrt{\frac{k^3}{p}}.
\end{align*}
Here the primary error is no longer in \emph{detecting} the leading eigenvectors (as the assumption that $\lambda_1 \gtrsim d/k$ implies large enough separation), but rather in the inherent statistical error implicit from the difference $\hat \Sigma - \Sigma$.  Our upper bound requires that $J$ is either known or can be estimated consistently (Assumption \ref{assumption:sparsistency}), so that our error depends on the inherent statistical error from $\hat \Sigma_{JJ} - \Sigma_{JJ}$.  In contrast, we do not optimize for factors of $\lambda_1$ in our upper bound, as the setting for sparse PCA typically assumes that the eigenvalues remain bounded in $n$ and $p$.  We instead need only the (milder) eigenvalue separation in Assumption \ref{assumption:eigenvalues}.  

Suppose instead of just observing $X_1,\dots,X_n \in \R^p$, one also observes response variables $Y_i \in \R$.  Consider the linear model $Y_i = X_i\t \beta +  \eps_i$, where $\eps_i$ is a mean-zero error term with variance $\sigma^2$.  Suppose one first performs unsupervised dimensionality reduction on the data matrix via sparse PCA and then computes $\hat \beta$ using ordinary least squares with the reduced data matrix.  The $2\to\infty$ bound in Theorem \ref{cor} could provide a partial answer to the out-of-sample prediction performance using a variable selection procedure.  
To be concrete, define $\hat \beta$ as the output of ordinary least squares by regressing $Y_i$ along $X \tilde U \tilde U\t$, where $\tilde U$ is the output of the sparse PCA procedure in Algorithm \ref{alg:spca} and $X$ is the $n \times p$ matrix of predictors.  Following \citet{huang_dimensionality_2020}, we can bound the risk of a new sample point $(x_*,Y_*)$ via
\begin{align*}
    \E \| Y_* - x_*\t \tilde \beta_{\mathrm{SPCA}} \|^2 | X 
    &\leq \beta\t \big( I - \tilde U \tilde U\t \big) \Sigma \big(I - \tilde U \tilde U\t\big) \beta  + \frac{\sigma^2}{n}\tr\bigg[\bigg( \frac{1}{n} \tilde U \tilde U\t X\t X \tilde U \tilde U\t\bigg)^{\dagger} \Sigma\bigg] + \sigma^2,
\end{align*}
where the first term represents the bias, the second term represents the variance, and the third term ($\sigma^2$) is the noise intrinsic to the problem.  The bias term can be expanded further via
\begin{align*}
    \beta\t \big( I - \tilde U \tilde U\t \big) \Sigma \big(I - \tilde U \tilde U\t\big) \beta 
    &=  \beta\t \big( \tilde U \tilde U\t - UU\t\big) \Sigma \big( \tilde U \tilde U\t - UU\t\big) \beta \\
    &\qquad +  2\beta\t \big( \tilde U \tilde U\t - UU\t\big) \Sigma \big(I- UU\t\big) \beta \\
    &\qquad + \lambda_{k+1} \| \beta\|_2^2.
\end{align*}
Consider the second term.  This could be bounded by noting that
\begin{align*}
    \big| \beta\t \big( \tilde U \tilde U\t - UU\t\big) \Sigma \big( \tilde U \tilde U\t - UU\t\big) \beta \big|
    &\leq \big\| \beta\t\big( \tilde U \tilde U\t - UU\t\big) \big\|_{\infty} \big\| \Sigma \big(I- UU\t\big) \beta \big\|_1 \\
    &\leq \lambda_{k+1} \| \beta\|_{\infty} \|\beta\|_1 \| \tilde U \tilde U\t - UU\t\|_{2\to\infty}.
\end{align*}
This bound has a factor of $\|\tilde U \tilde U\t - UU\t\|_{2\to\infty}$, which, while not exactly the same as what appears in Theorem \ref{cor}, is closely related to it by appealing to notions in subspace perturbation theory (see, e.g. Lemma 1 of \citet{cai_rate-optimal_2018}).  Therefore, through similar analysis, one could obtain bounds for the other bias and variance terms with respect to the eigenvalues of $\Sigma$, the quantity $\|\tilde U\tilde U\t - UU\t\|_{2\to\infty}$ and the quantities $\|\beta\|_1$ and $\|\beta\|_{\infty}$.  Consequently, these bounds would complement those in Theorem 1 of \citet{huang_dimensionality_2020} as sparse PCA is typically needed in a regime when $\mathfrak{r}(\Sigma) \gtrsim n$, whereas \citet{huang_dimensionality_2020} study the setting that $\mathfrak{r}(\Sigma) = o(n)$.  

Finally, our upper bound depends on the debiased estimator $\tilde U_J$, which is the matrix of eigenvectors of $\hat \Sigma_{JJ}$.  A key requirement is that any algorithm obtains the correct set $J$ with probability at least $1 - \delta$.  In general, one must consider the output of an optimization procedure to determine whether a specific algorithm obtains the correct set $J$.  If one additionally wanted to \emph{test} whether a certain row of $U$ is equal to zero (i.e., whether $ i \in J$), then one would need to construct a different debiased estimator as in \citet{jankova_-biased_2021} that uses the first-order necessary optimality conditions.  This procedure therefore relies heavily on the particular algorithm used, whereas our bounds hold for generic algorithms.

\section{OVERVIEW OF THE PROOF OF THEOREM \ref{cor}}

The full proof of Theorem \ref{cor} is in the supplementary material, though we include a brief overview here.  First, our main upper bound holds without Assumption \ref{assumption:clarity}, and we provide this general upper bound in Theorem \ref{thm1} (stated in the supplementary material \ref{sec:proofs}) and show how Theorem \ref{cor} can be deduced from Assumption \ref{assumption:clarity}.  To prove Theorem \ref{thm1}, we first show the following \emph{principal submatrix concentration} bound. 

\begin{restatable}[Principal Submatrix Concentration]{lemma}{spectralbound} \label{lem:spectral_bound}
Let $J$ be an index set of $\{1, ..., p\}$ of size $s$.  Then 
\begin{align*}
    \| \hat \Sigma_{JJ} - \Sigma_{JJ} \| \lesssim  \lambda_1 \bigg( \sqrt{ \frac{s}{n} } + \sqrt{\frac{\log(p)}{n}} \bigg)
\end{align*}
with probability at least $1 -O(p^{-4})$.  
\end{restatable}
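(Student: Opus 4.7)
The plan is to reduce Lemma \ref{lem:spectral_bound} to a standard sample-covariance concentration inequality applied to the projected observations $X_i[J] \in \R^s$, with the deviation parameter chosen so that the failure probability meets the desired $1 - O(p^{-4})$.

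First I would observe that $X_1[J], \dots, X_n[J]$ are i.i.d. mean-zero random vectors in $\R^s$, that their population covariance is $\Sigma_{JJ}$, and that their empirical second-moment matrix is exactly $\hat \Sigma_{JJ}$.  The next step is to verify that each $X_i[J]$ is subgaussian with subgaussian norm at most $C\sqrt{\lambda_1}$.  Fix a unit vector $v \in \R^s$ and let $\tilde v \in \R^p$ denote the zero-padded version of $v$ supported on $J$.  Under Assumption \ref{assumption:randomness}, $X_i = \Sigma^{1/2} Y_i$ with independent coordinates $Y_{ij}$ of unit $\psi_2$ norm, so using the fact that sums of independent subgaussians satisfy $\|\sum_j a_j Y_{ij}\|_{\psi_2}^2 \lesssim \sum_j a_j^2$, we obtain
\[
\|\langle v, X_i[J]\rangle\|_{\psi_2}^2 = \|\langle \Sigma^{1/2} \tilde v, Y_i\rangle\|_{\psi_2}^2 \lesssim \|\Sigma^{1/2}\tilde v\|_2^2 = \tilde v\t \Sigma \tilde v = v\t \Sigma_{JJ} v \leq \lambda_1.
\]

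Next, I would invoke a standard sample-covariance tail bound for $s$-dimensional subgaussian vectors (e.g. Theorem 4.7.1 of Vershynin's \emph{High-Dimensional Probability}, or a Koltchinskii--Lounici-type bound), which yields
\[
\| \hat \Sigma_{JJ} - \Sigma_{JJ} \| \leq C' \lambda_1 \Bigl( \sqrt{\tfrac{s+t}{n}} + \tfrac{s+t}{n} \Bigr)
\]
with probability at least $1 - 2e^{-t}$, for any $t \geq 0$.  Taking $t = c\log(p)$ for a sufficiently large absolute constant $c$ brings the failure probability to at most $2 p^{-c}$, and $c \geq 4$ suffices.

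Finally, I would simplify using Assumption \ref{assumption:dimensions}: the condition $s\log(p) \ll n$ forces both $s/n$ and $\log(p)/n$ to be $o(1)$, so the additive term $(s+\log(p))/n$ is absorbed into the square-root term $\sqrt{(s+\log(p))/n}$, which in turn splits as at most $\sqrt{s/n} + \sqrt{\log(p)/n}$. This yields the stated bound.  The only nontrivial ingredient is the subgaussian-norm bound from the first paragraph; the rest is a direct application of an off-the-shelf concentration inequality, and no union bound over subsets $J$ is required since $J$ is fixed in the lemma's statement, which is precisely why the logarithmic factor in the bound appears as the tail parameter $\sqrt{\log(p)/n}$ rather than a union-bound penalty of order $\sqrt{s\log(p)/n}$.
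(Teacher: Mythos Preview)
Your proposal is correct and follows essentially the same approach as the paper: both rely on the subgaussian norm bound $\|\langle v, X_i[J]\rangle\|_{\psi_2} \lesssim \sqrt{\lambda_1}$ and then an $\eps$-net covariance concentration argument, with the only difference being that the paper reproves the concentration step from scratch (via Rademacher symmetrization and a Chernoff bound) while you invoke it as a packaged theorem. Just be careful that Vershynin's Theorem~4.7.1 as stated is for \emph{isotropic} vectors; the non-isotropic version you actually need is the Koltchinskii--Lounici bound you also cite (or Exercise~4.7.3 in the same book), and your computation in fact verifies the stronger pre-Gaussian condition $\|\langle v, X_i[J]\rangle\|_{\psi_2}^2 \lesssim v\t \Sigma_{JJ} v$ that this result requires.
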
 

The proof is somewhat standard and primarily follows arguments detailed in \citet{wainwright_high-dimensional_2019} via $\eps$-nets and concentration, though we include it in Section \ref{sec:proofs_prelims} for completeness.  It is also very similar to a result in \citet{amini_high-dimensional_2009} for Gaussian random variables.  To the best of our knowledge, there is no general result of this form in the literature for subgaussian random vectors.  The following Lemma shows that the leading $k$ eigenvalues of $\hat \Sigma_{JJ}$ are well-separated from its bottom eigenvalues.

\begin{restatable}[Existence of an Eigengap]{lemma}{eigengaps}\label{lem:eigengaps}
Under the event in Lemma \ref{lem:spectral_bound} and Assumption \ref{assumption:eigenvalues}, the eigenvalues of $\hat \Sigma_{JJ}$ and $\Sigma_{JJ}$ satisfy
\begin{align*}
     \lambda_k - \tilde \lambda_{k+1} \geq \frac{\lambda_k -\lambda_{k+1}}{8}; &\qquad \tilde \lambda_k - \lambda_{k+1} \geq \frac{\lambda_k -\lambda_{k+1}}{8}; \\
          & \tilde \lambda_k \geq \frac{\lambda_k}{4}.
\end{align*}
Consequently, this bound holds with probability at least $1 - O(p^{-4}).$
\end{restatable}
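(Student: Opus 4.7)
The plan is to argue deterministically on the event of Lemma \ref{lem:spectral_bound}, then quote that event's probability at the end. The key engine is Weyl's inequality applied to $\hat\Sigma_{JJ} = \Sigma_{JJ} + (\hat\Sigma_{JJ}-\Sigma_{JJ})$, which gives $|\tilde\lambda_i - \lambda_i(\Sigma_{JJ})| \leq \|\hat\Sigma_{JJ}-\Sigma_{JJ}\|$ for every $i$. The only real preliminary work is identifying the relevant eigenvalues of $\Sigma_{JJ}$: the eigenvector calculation already displayed in Section \ref{sec:2} shows that $\Sigma_{JJ} U_J = U_J \Lambda$, so the leading $k$ eigenvalues of $\Sigma_{JJ}$ are exactly $\lambda_1,\ldots,\lambda_k$. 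For the $(k+1)$-th, since $\Sigma_{JJ}$ is a principal submatrix of $\Sigma$, Cauchy's interlacing theorem gives $\lambda_{k+1}(\Sigma_{JJ}) \leq \lambda_{k+1}(\Sigma) = \lambda_{k+1}$.

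Next I would read off the perturbation budget from Assumption \ref{assumption:eigenvalues}. Rearranging the inequality $C\lambda_1(\sqrt{s/n}+\sqrt{\log(p)/n}) + \lambda_{k+1}/8 \leq \lambda_k/8$ yields
\begin{align*}
C\lambda_1\Big(\sqrt{s/n}+\sqrt{\log(p)/n}\Big) \leq \frac{\lambda_k - \lambda_{k+1}}{8}.
\end{align*}
On the event of Lemma \ref{lem:spectral_bound} the right-hand side dominates $\|\hat\Sigma_{JJ}-\Sigma_{JJ}\|$ (absorbing the constant into $C$). Combining this with Weyl's inequality gives
\begin{align*}
\tilde\lambda_k \geq \lambda_k - \tfrac{1}{8}(\lambda_k-\lambda_{k+1}), \qquad \tilde\lambda_{k+1} \leq \lambda_{k+1}(\Sigma_{JJ}) + \tfrac{1}{8}(\lambda_k-\lambda_{k+1}) \leq \lambda_{k+1} + \tfrac{1}{8}(\lambda_k-\lambda_{k+1}).
\end{align*}
Subtracting yields $\lambda_k - \tilde\lambda_{k+1} \geq \tfrac{7}{8}(\lambda_k-\lambda_{k+1})$ and $\tilde\lambda_k - \lambda_{k+1} \geq \tfrac{7}{8}(\lambda_k-\lambda_{k+1})$, both of which a fortiori imply the claimed $\tfrac{1}{8}(\lambda_k-\lambda_{k+1})$ lower bound.

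For the third inequality $\tilde\lambda_k \geq \lambda_k/4$, I would invoke the second part of Assumption \ref{assumption:eigenvalues}, namely $2\lambda_{k+1} < (1-\varepsilon)\lambda_k$, which forces $\lambda_{k+1} < \lambda_k/2$ and hence $\lambda_k - \lambda_{k+1} \leq \lambda_k$. Substituting into the bound above gives $\tilde\lambda_k \geq \lambda_k - \lambda_k/8 = 7\lambda_k/8 \geq \lambda_k/4$. Finally, all three conclusions are deterministic consequences of the Lemma \ref{lem:spectral_bound} event, so they inherit its probability guarantee $1 - O(p^{-4})$.

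The argument is short and the only non-automatic step is recognizing that the top $k$ eigenvalues of $\Sigma_{JJ}$ match those of $\Sigma$ while the $(k+1)$-th is at most $\lambda_{k+1}$; this is the ``main obstacle,'' though it is essentially just bookkeeping with the sparsity structure and Cauchy interlacing. The remainder is Weyl's inequality paired with a direct rearrangement of Assumption \ref{assumption:eigenvalues}.
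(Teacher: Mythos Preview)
Your proposal is correct and follows essentially the same route as the paper: Weyl's inequality combined with Cauchy interlacing for $\lambda_{k+1}(\Sigma_{JJ})\le\lambda_{k+1}$, then rearranging Assumption~\ref{assumption:eigenvalues} to bound the perturbation by $(\lambda_k-\lambda_{k+1})/8$. The only cosmetic difference is in the third inequality: the paper reads off $C\lambda_1(\sqrt{s/n}+\sqrt{\log(p)/n})\le\lambda_k/8$ directly from Assumption~\ref{assumption:eigenvalues} (drop the nonnegative $\lambda_{k+1}/8$), whereas you route through $(\lambda_k-\lambda_{k+1})/8\le\lambda_k/8$; note that this last step already follows from $\lambda_{k+1}\ge0$ and does not require the second part of Assumption~\ref{assumption:eigenvalues}.
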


We also note that $\lambda_{k+1}(\Sigma_{JJ}) \leq \lambda_{k+1}$ by the Cauchy interlacing inequalities \citep{horn_matrix_2012}, and the top $k$ eigenvalues of $\Sigma_{JJ}$ are the same as those of $\Sigma$ by the eigenvector equation.  These lemmas set the stage for our main analysis.  

As an immediate consequence of Lemmas \ref{lem:spectral_bound} and \ref{lem:eigengaps}, we can obtain  the following proposition concerning the spectral proximity of $U_J U_J\t$ to $\tilde U_J \tilde U_J\t$, ensuring that $\tilde U_J$ (and hence $\tilde U$) is well-defined. 

\begin{restatable}[Spectral Proximity]{proposition}{lemdk}\label{lem:dk}
Under the assumptions of Theorem \ref{thm1}, we have that
\begin{align*}
    \| U_JU_J\t - \tilde U_J\tilde U_J\t \| \lesssim  
    \frac{\lambda_1}{\lambda_k-\lambda_{k+1}} \bigg[\sqrt{\frac{s}{n}}  + \sqrt{\frac{\log(p)}{n}}\bigg]
\end{align*}
with probability at least $1 - O(p^{-4} )$.  
\end{restatable}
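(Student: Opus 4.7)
The proof is essentially a one-step application of the Davis--Kahan theorem combined with the two preceding lemmas. Here is the plan.

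First, I would observe that $U_J U_J\t$ and $\tilde U_J \tilde U_J\t$ are the orthogonal projectors onto the top-$k$ eigenspaces of the symmetric matrices $\Sigma_{JJ}$ and $\hat\Sigma_{JJ}$, respectively. By the projector form of the Davis--Kahan $\sin\Theta$ theorem (see, e.g., \citet{yu_useful_2014}), provided there is a positive spectral gap $\delta$ separating the top-$k$ eigenvalues of one matrix from the tail eigenvalues of the other, one has
\begin{align*}
    \| U_J U_J\t - \tilde U_J \tilde U_J\t \| \leq \frac{\| \hat\Sigma_{JJ} - \Sigma_{JJ}\|}{\delta}.
\end{align*}
This is the cleanest variant to use here because it controls the projector difference in spectral norm directly, without introducing an orthogonal alignment matrix.

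Second, I would produce the gap $\delta$ via Lemma \ref{lem:eigengaps}. Recall that the top-$k$ eigenvalues of $\Sigma_{JJ}$ coincide with $\lambda_1, \dots, \lambda_k$ by the sparse eigenvector equation displayed in Section \ref{sec:2}, and $\lambda_{k+1}(\Sigma_{JJ}) \leq \lambda_{k+1}$ by Cauchy interlacing, so the ambient gap $\lambda_k - \lambda_{k+1}$ is the relevant population gap for the submatrix problem. Lemma \ref{lem:eigengaps} then yields, on the event where Lemma \ref{lem:spectral_bound} holds,
\begin{align*}
    \delta \;=\; \min\{\lambda_k - \tilde\lambda_{k+1},\ \tilde\lambda_k - \lambda_{k+1}\} \;\geq\; \frac{\lambda_k - \lambda_{k+1}}{8}.
\end{align*}

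Third, on the same event Lemma \ref{lem:spectral_bound} controls the numerator by $C\lambda_1\bigl(\sqrt{s/n} + \sqrt{\log(p)/n}\bigr)$. Dividing through, absorbing the constant $8$, and noting that both ingredients hold simultaneously on an event of probability at least $1 - O(p^{-4})$, delivers the stated bound.

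There is essentially no obstacle in this step; the substance of the argument lives entirely in Lemmas \ref{lem:spectral_bound} and \ref{lem:eigengaps}, and Proposition \ref{lem:dk} is their immediate corollary via Davis--Kahan. The only minor care points are (i) verifying that the quantitative eigengap requirement of Assumption \ref{assumption:eigenvalues} is strong enough to apply Lemma \ref{lem:eigengaps} (which it is, by design) and (ii) choosing a projector-form of Davis--Kahan so that the output is already a bound on $\|U_J U_J\t - \tilde U_J \tilde U_J\t\|$ rather than on $\|U_J - \tilde U_J W\|$ for some rotation $W$.
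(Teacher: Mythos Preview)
Your proposal is correct and mirrors the paper's proof essentially line for line: apply Davis--Kahan to bound the projector difference by $\|\hat\Sigma_{JJ}-\Sigma_{JJ}\|/(\lambda_k-\tilde\lambda_{k+1})$, invoke Lemma~\ref{lem:eigengaps} to replace the denominator by $(\lambda_k-\lambda_{k+1})/8$, and invoke Lemma~\ref{lem:spectral_bound} for the numerator. The only cosmetic difference is that you write the gap as $\min\{\lambda_k-\tilde\lambda_{k+1},\ \tilde\lambda_k-\lambda_{k+1}\}$ whereas the paper uses just $\lambda_k-\tilde\lambda_{k+1}$; both are covered by Lemma~\ref{lem:eigengaps}.
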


We use this bound several times in our subsequent analysis. After these preliminary bounds, which are restated for convenience in the supplementary material, we develop an expansion for the difference $\tilde U_J - U_JW_*$ in terms of the error matrix $(\Sigma - \hat \Sigma)$ and deterministic quantities depending only on $\Sigma$.  Informally, we show that we have the ``first-order'' approximation
\begin{align*}
\tilde U_J - U_J W_* = (\hat \Sigma_{JJ} - U_J U_J\t \Sigma_{JJ})\tilde U_J \tilde \Lambda\inv + R,
\end{align*}
where $R$ is a residual term and $\tilde \Lambda$ is the diagonal matrix of the $k$ leading eigenvalues of $\hat \Sigma_{JJ}$. Lemma \ref{lem:eigengaps} ensures that the eigenvalues of $\tilde \Lambda$ can be bounded with respect to the eigenvalues of $\Sigma$.  The residual term $R$ (the terms $T_1, T_2$, and $T_3$ in the supplementary material) is bounded in Lemmas \ref{lem:T1}, \ref{lem:T2}, and \ref{lem:J1} with tools from complex analysis \citep{greene_function_2006},  matrix perturbation theory \citep{bhatia_matrix_1997}, and high-dimensional probability \citep{wainwright_high-dimensional_2019,vershynin_high-dimensional_2018}.  

To bound the leading term in $2\to\infty$ norm, we show that it can be further decomposed into two terms, that we dub $J_1$ and $J_2$, by the decomposition
\begin{align*}
    (\hat \Sigma_{JJ} - U_J U_J\t \Sigma_{JJ})\tilde U_J  &= (\hat \Sigma_{JJ} - \Sigma_{JJ})\tilde U_J + U_{\perp} \Lambda_{\perp} U_{\perp}\t \tilde U_J \\
    :&= J_1 + J_2,
\end{align*}
where $U_{\perp}$  is the $s \times (s-k)$ matrix such that $[U_J, U_{\perp}]$ is an orthogonal matrix. The first term reflects the error from the randomness and the leading subspace $U_J$ and the second term reflects the influence of $U_{\perp}$ on  $\tilde U_J$.  

The term $J_2 = U_{\perp} \Lambda_{\perp} U_{\perp}\t \tilde U_J$ is bounded using a matrix series expansion for the matrix $\tilde U_J$ (Lemma \ref{lem:perp_bound}).  More explicitly, we define the perturbation $E := \hat \Sigma_{JJ} - U_{J} U_J\t \Sigma_{JJ} U_J U_J\t$, and we show that we can write
\begin{align*}
    \tilde U_J &= \sum_{m=0}^{\infty} E^m (U_J \Lambda U_J\t) \tilde U_J \Lambda^{-m+1}.
\end{align*}
We then analyze each term in $2\to\infty$ norm, take a union bound for the first $O(\log(n))$ terms and bound the remaining part of the series coarsely using the spectral norm.  Similar techniques have been used in \citet{cape_signal-plus-noise_2019,xie_bayesian_2019,tang_eigenvalues_2018} and \citet{tang_asymptotically_2017}, but our analysis requires additional considerations due to the fact that we do not have a mean-zero perturbation since $\E E = U_{\perp} U_{\perp}\t \Sigma_{JJ} U_{\perp} U_{\perp}\t$.  However, the matrix $EU_{J}$ \emph{is} mean-zero since $U_{\perp}\t U_J = 0$.  

The remaining term $J_1 = (\hat \Sigma_{JJ} - \Sigma_{JJ})\tilde U_J$ is then analyzed directly through its block-structure (Equation \eqref{blocks}). Letting $X$ be the $n \times p$ matrix whose rows are the observations, by Assumption \ref{assumption:randomness}, $X = Y \Sigma^{1/2}$, where $Y$ is an  $n\times p$ matrix of independent random variables with unit variance.  Then the empirical covariance $\hat \Sigma= \frac{1}{n} X\t X$ and hence 
\begin{align*}
    \hat \Sigma_{JJ} &= \frac{1}{n}\bigg(
(\Sigma^{1/2})_{JJ} Y_J\t Y_J (\Sigma^{1/2})_{JJ} + \Sigma_{JJ^c}^{1/2} Y_{J^c}\t Y_J (\Sigma^{1/2})_{JJ} \\
&\qquad + (\Sigma^{1/2})_{JJ} Y_J\t Y_{J^c} (\Sigma_{J J^c}^{1/2})\t + \Sigma_{JJ^c}^{1/2} Y_{J^c}\t Y_{J^c} (\Sigma_{JJ^c}^{1/2})\t\bigg) 
\end{align*}
where we have abused the notation
\begin{align*}
    \Sigma_{JJ^c}^{1/2} = (\Sigma^{1/2})_{JJ^c}.
\end{align*}
Above, the $n\times p$ matrix $Y$ is partitioned via $Y = [Y_J, Y_{J^c}]$, where $Y_{J}$ and $Y_{J^c}$ are the variables corresponding to $J$ and its complement, $J^c$, respectively. This term is bounded in Lemmas \ref{lem:K1}, \ref{lem:K2}, and \ref{lem:K3}.  Lemmas \ref{lem:K1} and \ref{lem:K2} are standard applications of matrix perturbation theory (via Proposition \ref{lem:dk}) and standard concentration inequalities such as Bernstein's inequality, but Lemma \ref{lem:K3} requires studying the spectral properties of the matrix $\Sigma_{JJ^c}$ and its relation to $U_J$ (Proposition \ref{prop:sigma_props}).  

Our proof is then completed by combining and aggregating all of these bounds.  Throughout the proof we make heavy use of several important concentration inequalities  and notions from subspace perturbation theory, 
so Appendix \ref{sec:bernstein} in the supplementary material contains additional information on these topics. 

\subsection*{Acknowledgements}
 Both authors thank the anonymous reviewers for feedback during the review process. JS's research was partially supported by NSF grant CCF 2007649. JA’s research was partially supported by a fellowship from the Johns Hopkins Mathematical Institute of Data Science (MINDS) via its NSF TRIPODS award CCF-1934979, and through the Charles and Catherine Counselman Fellowship.

\bibliography{sparse.bib}
\appendix 
\clearpage
\thispagestyle{empty}
\onecolumn \makesupplementtitle

\begin{abstract}
This supplementary material contains all the proofs of our main results.  Appendix \ref{sec:proofs} contains the proof of Theorem \ref{cor}, Appendix \ref{proofs:int} contains the proofs of additional lemmas needed en route the the proof of the main theorem, and Appendix \ref{sec:bernstein} contains additional background material on Orlicz norms, concentration inequalities, and subspace perturbation theory.  
\end{abstract}

\section{Proof of Theorem \ref{cor}}
\label{sec:proofs}
In this section we prove Theorem \ref{cor}.  First, Theorem \ref{cor} is actually a consequence of the following more general theorem that does not require Assumption \ref{assumption:clarity}.   Section \ref{sec:prelims} develops the preliminary bounds in terms of principal submatrix and eigenvalue concentration (Lemmas \ref{lem:spectral_bound} and \ref{lem:eigengaps}), and in Section \ref{sec:thm1proof} we prove Theorem \ref{thm1}.  In Section \ref{sec:corproof} we show how Theorem \ref{cor} can be deduced by combining Theorem \ref{thm1} with Assumption \ref{assumption:clarity}. En route to the proof of Theorem \ref{thm1} we introduce several technical lemmas; we prove these in Section \ref{proofs:int}.  Recall that we denote $\kappa := \frac{\lambda_1}{\lambda_k}$ as the (reduced) condition number of $\Sigma$.  

\begin{theorem}\label{thm1}
Suppose Assumptions \ref{assumption:dimensions}, \ref{assumption:sparsistency}, \ref{assumption:randomness}, and \ref{assumption:eigenvalues} are satisfied.  Then with probability at least $1-\delta - p^{-2}$,  there exists an orthogonal matrix $W_* \in \mathbb{O}(k)$ such that
\begin{align*}
    \max_{1\leq i \leq p} \| \tilde U_{i\cdot} - (U W_*)_{i\cdot} \| \lesssim \mathcal{E}_1 + \mathcal{E}_2 + \mathcal{E}_3 + \mathcal{E}_4 + \mathcal{E}_5 
\end{align*}
where
\begin{align*}
    \mathcal{E}_1 :&=\frac{ \kappa \lambda_1}{\lambda_k - \lambda_{k+1}} \frac{s \log(p)}{n} \|U\|_{2\to\infty}  + \kappa k \sqrt{\frac{\log(p)}{n}} \|U\|_{2\to\infty} \\
    \mathcal{E}_2 :&=  \frac{  \lambda_1 ^2}{(\lambda_k - \lambda_{k+1})^2} \frac{s \log(p)}{n} \|U \|_{2\to\infty}\\
    \mathcal{E}_3 :&=    \sqrt{\frac{s \log(p)}{n}} \frac{\kappa \lambda_1^{1/2}}{\lambda_k - \lambda_{k+1}}  \min\bigg( \|\Sigma\|_{\max}^{1/2}, \sqrt{\lambda_1} \| U \|_{2\to\infty} \bigg)\\
  \mathcal{E}_4 :&= 
 \frac{\lambda_{k+1}}{\lambda_k} \kappa^2 \sqrt{\frac{k\log(p)}{n}} + \frac{\lambda_{k+1}}{\lambda_k} \kappa^3 \frac{s\log(p)}{n}; \\
  \mathcal{E}_5 :&= \frac{\kappa \lambda_1}{\lambda_k - \lambda_{k+1}} \frac{ s \log(p)}{n} + \kappa  \sqrt{\frac{k\log(p)}{n}}.
\end{align*}
\end{theorem}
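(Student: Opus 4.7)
The plan is to first establish the more general bound of Theorem~\ref{thm1} in the supplement, which does not use the incoherence/conditioning in Assumption~\ref{assumption:clarity}, and then deduce Theorem~\ref{cor} by substituting $\|U\|_{2\to\infty} \lesssim (k/s)^{1/2}$, $k \lesssim \sqrt{s}$, and the eigenvalue ratio bounds into the five error terms $\mathcal{E}_1,\ldots,\mathcal{E}_5$; terms like $\mathcal{E}_1, \mathcal{E}_2$ pick up a $\sqrt{k/s}$ factor that tames the explicit $s$, and $\mathcal{E}_4$ ends up dominating. Throughout, I would condition on the event $\{\hat J = J\}$ from Assumption~\ref{assumption:sparsistency} (costing $\delta$ in probability) and on the concentration event from Lemma~\ref{lem:spectral_bound} (costing $O(p^{-2})$), so that $\tilde U_J$ is the matrix of leading $k$ eigenvectors of $\hat\Sigma_{JJ}$ and the eigengap of Lemma~\ref{lem:eigengaps} is available.

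The heart of the proof is a first-order expansion of $\tilde U_J - U_J W_*$. Starting from $\hat\Sigma_{JJ} \tilde U_J = \tilde U_J \tilde\Lambda$ and $\Sigma_{JJ} U_J = U_J \Lambda$, and writing $W_*$ as the Frobenius-optimal orthogonal alignment (expressible via the SVD of $U_J^\top \tilde U_J$), algebraic manipulation produces
\begin{equation*}
\tilde U_J - U_J W_* = \bigl(\hat\Sigma_{JJ} - U_J U_J^\top \Sigma_{JJ}\bigr) \tilde U_J \tilde\Lambda^{-1} + R,
\end{equation*}
where $R$ collects lower-order residual terms that can be controlled in spectral norm using Proposition~\ref{lem:dk} together with the $\sin\Theta$-type identity $\|W_* - U_J^\top \tilde U_J\| \lesssim \|U_JU_J^\top - \tilde U_J \tilde U_J^\top\|^2$. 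The leading term then splits as
\begin{equation*}
(\hat\Sigma_{JJ} - U_J U_J^\top \Sigma_{JJ})\tilde U_J = \underbrace{(\hat\Sigma_{JJ} - \Sigma_{JJ})\tilde U_J}_{J_1} + \underbrace{U_\perp \Lambda_\perp U_\perp^\top \tilde U_J}_{J_2},
\end{equation*}
isolating the empirical fluctuation from the residual bias coming from the smaller eigenspace.

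For $J_2$ I would run a Neumann-type series expansion. Setting $E := \hat\Sigma_{JJ} - U_J U_J^\top \Sigma_{JJ} U_J U_J^\top$, a Cauchy integral / resolvent identity yields $\tilde U_J = \sum_{m \geq 0} E^m (U_J \Lambda U_J^\top) \tilde U_J \Lambda^{-m-1}$ modulo convergence, which is guaranteed by $\|E\| \ll \lambda_k$ from Lemmas~\ref{lem:spectral_bound} and~\ref{lem:eigengaps}. I would then truncate at $m_* = O(\log n)$ terms, bounding each truncated term in $2\to\infty$ norm by a row-wise subexponential Bernstein inequality (using Assumption~\ref{assumption:randomness}), and bounding the tail $m \geq m_*$ crudely in spectral norm where geometric decay takes over. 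For $J_1$, substituting $X = Y\Sigma^{1/2}$ gives the four-block expansion of $\hat\Sigma_{JJ}$ transcribed in the sketch; each block contributes via a $2\to\infty$ Bernstein bound, with the cross-term $\Sigma_{JJ^c}^{1/2}Y_{J^c}^\top Y_J (\Sigma^{1/2})_{JJ}$ handled using an auxiliary spectral bound on $\Sigma_{JJ^c}$ and its interaction with $U_J$.

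The main obstacle will be the $J_2$ analysis: although the standard Cape--Tang--Priebe-style series arguments require the perturbation to be mean-zero, here $\mathbb{E}[E] = U_\perp U_\perp^\top \Sigma_{JJ} U_\perp U_\perp^\top \neq 0$. The rescue is the identity $U_\perp^\top U_J = 0$, which gives $\mathbb{E}[E U_J] = 0$; factoring each series term as $E^{m-1}(EU_J)\Lambda U_J^\top \tilde U_J \Lambda^{-m-1}$ pushes all mean-zero concentration onto the single factor $EU_J$, while the surrounding $E^{m-1}$ is absorbed via spectral-norm bounds. Tracking condition number powers through this series (along with $\kappa$ factors from $\tilde\Lambda^{-1}$ versus $\Lambda^{-1}$) produces the $\mathcal{E}_4$ term, which, after applying Assumption~\ref{assumption:clarity}, gives exactly the $\kappa^2 \sqrt{k\log p/n} + \kappa^3 s\log p / n$ rate stated in Theorem~\ref{cor}.
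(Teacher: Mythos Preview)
Your proposal follows the paper's architecture almost exactly: the same first-order expansion into a leading term plus residual, the same $J_1/J_2$ split of the leading term, the same Neumann/matrix-series treatment of $J_2$ (including the correct identification of the non-mean-zero obstruction and the $EU_J$ rescue via $U_\perp^\top U_J=0$), and the same four-block decomposition of $J_1$ with the auxiliary spectral fact about $(\Sigma^{1/2})_{JJ^c}$.

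The one place where your sketch diverges from the paper is the handling of the residual $R$. The paper does not lump it together; it splits $R$ into three distinct pieces $T_1,T_2,T_3$ which produce $\mathcal{E}_1,\mathcal{E}_2,\mathcal{E}_3$ respectively. Your $\sin\Theta$-squared identity is exactly how $T_2=U_J(W_*-U_J^\top\tilde U_J)$ is handled. But $T_1=U_J(\Lambda U_J^\top\tilde U_J-U_J^\top\tilde U_J\tilde\Lambda)\tilde\Lambda^{-1}$ is not controlled purely by Proposition~\ref{lem:dk}: the paper further splits $U_J^\top(\hat\Sigma_{JJ}-\Sigma_{JJ})\tilde U_J$ along $U_JU_J^\top$ and $I-U_JU_J^\top$, and applies an entrywise Bernstein bound to the $k\times k$ matrix $U_J^\top(\hat\Sigma_{JJ}-\Sigma_{JJ})U_J$ to get the $\kappa k\sqrt{\log(p)/n}\,\|U\|_{2\to\infty}$ part of $\mathcal{E}_1$. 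And $T_3=U_JU_J^\top\Sigma_{JJ}(I-U_JU_J^\top)\tilde U_J\tilde\Lambda^{-1}$ is handled not by a crude spectral bound but by a holomorphic functional calculus/contour-integral representation of $\tilde U_J\tilde U_J^\top-U_JU_J^\top$, which is what produces the $\|\Sigma\|_{\max}^{1/2}$ branch of the minimum in $\mathcal{E}_3$. If you control $R$ only via Proposition~\ref{lem:dk} in spectral norm, you obtain $\kappa\sqrt{s\log(p)/n}\,\|U\|_{2\to\infty}$ in place of $\mathcal{E}_1$ and only the $\sqrt{\lambda_1}\,\|U\|_{2\to\infty}$ branch of $\mathcal{E}_3$; this still suffices to deduce Theorem~\ref{cor} once Assumption~\ref{assumption:clarity} is invoked, but it does not recover the exact error terms stated in Theorem~\ref{thm1}.
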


\subsection{Preliminary Bounds}\label{sec:prelims}
Note that by Assumption \ref{assumption:sparsistency}, we need only examine the $s \times k$ matrix of eigenvectors of $\hat \Sigma_{JJ}$ and $\Sigma_{JJ}$ respectively.  We will develop an expansion for the difference $\tilde U_J - U_JW_*$ by viewing $\hat \Sigma_{JJ}$ as a perturbation of $\Sigma_{JJ}$. For convenience we restate the initial preliminary bounds in the main paper.  Except for Proposition \ref{lem:dk}, the proofs are in Section \ref{sec:proofs_prelims}.  The first is the following principal submatrix concentration bound.





\spectralbound*


Henceforth, we assume the correct support set $J$ is known; it is the correct set $J$ with probability at least $1 -\delta$ by Assumption \ref{assumption:sparsistency}.  As discussed in the main paper, using Lemma \ref{lem:spectral_bound}, we can derive the following eigenvalue bound, which we present as a lemma below.  

\eigengaps*

%

Finally, we have the following $\sin\Theta$ distance error between $U_J$ and $\tilde U_J$.

\lemdk*

\begin{proof}[Proof of Proposition \ref{lem:dk}]
By the Davis-Kahan Theorem \citep{bhatia_matrix_1997,yu_useful_2014} and Lemma \ref{lem:eigengaps}, 
\begin{align}
     \| U_J U_J\t - \tilde U_J\tilde U_J\t \| &\leq \frac{\| \hat \Sigma_{JJ} - \Sigma_{JJ}\|}{\lambda_k - \tilde \lambda_{k+1}}.  \nonumber \\
     &\lesssim \frac{\| \hat \Sigma_{JJ} - \Sigma_{JJ}\|}{\lambda_k -  \lambda_{k+1}} \label{dk1}
\end{align}
By Lemma \ref{lem:spectral_bound}, with probability at least $1 - O(p^{-4})$, the numerator can be bounded by
\begin{align*}
    \| \hat \Sigma_{JJ} - \Sigma_{JJ}\| &\leq \lambda_1  \bigg( \sqrt{ \frac{s}{n} } + \sqrt{\frac{\log(p)}{n}} \bigg)
\end{align*}
Combining this and Equation \eqref{dk1} gives the result. 
\end{proof}

In the proofs that follow, we will use the fact that by Proposition  \ref{lem:dk}, we have that 
\begin{align*}
    \| U_JU_J\t - \tilde U_J\tilde U_J\t\| \lesssim \frac{\lambda_1}{\lambda_k -\lambda_{k+1}} \sqrt{\frac{s\log(p)}{n}},
\end{align*}
which is a little more amenable to downstream analysis.  In addition, we use several equivalent expressions for the spectral norm of the difference of projections; see Lemma  \ref{lem:sintheta} in Appendix \ref{sec:bernstein} 
for a discussion of how to equate these.


\subsection{Proof of Theorem \ref{thm1}} \label{sec:thm1proof}
We now proceed with the proof.  At a high level, the argument consists of a deterministic matrix decomposition, each term of which we bound in probability.  Define $\tilde \Lambda$ as the diagonal $k\times k$ matrix of eigenvalues of $\hat \Sigma_{JJ}$.  Define $W_*$ to be the matrix
\begin{align*}
    W_* := \argmin_{W \in \mathbb{O}(k)} \| \tilde U_J - U_J W \|_F.
\end{align*}
Is is well-known that $W_*$ can be computed from the singular value decomposition of $U_J\t \tilde U_J$ (e.g.  \citet{abbe_entrywise_2020,cape_two--infinity_2019,chen_spectral_2020}). 

We now expand the difference via
\begin{align}
    \tilde U_J - U_J W_*&= \tilde U_J - U_J U_J\t \tilde U_J - U_J( W_* - U_J\t \tilde U_J) \nonumber \\
    &= \tilde U_J - U_J \Lambda U_J\t \tilde U_J\tilde \Lambda\inv + U_J \Lambda U_J\t \tilde U_J\tilde \Lambda\inv -  U_J U_J\t \tilde U_J - U_J( W_* - U_J\t \tilde U_J)  \nonumber\\
    &= \tilde U_J - U_J \Lambda U_J\t \tilde U_J\tilde \Lambda\inv + U_J (\Lambda U_J\t \tilde U_J-  U_J\t \tilde U_J\tilde \Lambda )\tilde \Lambda\inv  - U_J( W_* -  U_J\t \tilde U_J) \nonumber \\
    &= \tilde U_J \tilde \Lambda \tilde \Lambda\inv - U_J \Lambda U_J\t \tilde U_J\tilde \Lambda\inv + U_J (\Lambda U_J\t \tilde U_J-  U_J\t \tilde U_J\tilde \Lambda )\tilde \Lambda\inv  - U_J( W_* -  U_J\t \tilde U_J)  \nonumber\\
    &= (\tilde U_J \tilde \Lambda-  U_J \Lambda U_J\t \tilde U_J) \tilde \Lambda\inv + U_J (\Lambda U_J\t \tilde U_J-  U_J\t \tilde U_J\tilde \Lambda )\tilde \Lambda\inv  - U_J( W_* - U_J\t \tilde U_J) \nonumber\\
    &= A + T_1 - T_2, \label{init}
\end{align}
where 
\begin{align*}
    A :&=  (\tilde U_J \tilde \Lambda-  U_J \Lambda U_J\t \tilde U_J) \tilde \Lambda\inv; \\
    T_1 :&= U_J (\Lambda U_J\t \tilde U_J-  U_J\t \tilde U_J\tilde \Lambda )\tilde \Lambda\inv  \\
    T_2 :&= U_J( W_* - U_J\t \tilde U_J).
\end{align*}
Both $T_1$ and $T_2$ are analyzed concisely in Lemmas \ref{lem:T1} and \ref{lem:T2} as follows.  Their proofs are in Section \ref{sec:proofst1t2}. The proof of  Lemmas \ref{lem:T1} and \ref{lem:T2} are both rather straightforward and based on previous results in entrywise eigenvector analysis \citep{abbe_ell_p_2020,abbe_entrywise_2020,agterberg_entrywise_2021,cai_subspace_2021,cape_signal-plus-noise_2019,cape_two--infinity_2019,chen_spectral_2020,tang_asymptotically_2017,xia_statistical_2020,xie_bayesian_2019,xie_entrywise_2021,yan_inference_2021}.  


\begin{restatable}[Bound on $T_1$]{lemma}{tone}\label{lem:T1}
We have that 
\begin{align*}
  \|   U_J (\Lambda U_J\t \tilde U_J-  U_J\t \tilde U_J\tilde \Lambda )\tilde \Lambda\inv \|_{2\to\infty} &\lesssim \frac{\|U\|_{2\to\infty} \lambda_1^2}{\lambda_k(\lambda_k - \lambda_{k+1})} \frac{s \log(p)}{n}  \\
      &\qquad  + \frac{k\lambda_1 \|U\|_{2\to\infty}}{\lambda_k}  \sqrt{\frac{\log(p)}{n}} \\
      &\equiv \mathcal{E}_1
\end{align*}
with probability at least $1 - O(p^{-4} )$.
\end{restatable}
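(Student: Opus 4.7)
The plan is to reduce the $2\to\infty$ norm on the left to a spectral-norm estimate on a $k \times k$ matrix via the elementary inequality $\|AB\|_{2\to\infty} \leq \|A\|_{2\to\infty} \|B\|$. Since the rows of $U$ outside $J$ vanish, $\|U\|_{2\to\infty} = \|U_J\|_{2\to\infty}$, and therefore the left hand side is bounded by $\|U\|_{2\to\infty}\,\|\Lambda U_J\t \tilde U_J - U_J\t \tilde U_J \tilde \Lambda\|\,\|\tilde \Lambda\inv\|$. Lemma \ref{lem:eigengaps} handles the last factor via $\|\tilde \Lambda\inv\| = 1/\tilde \lambda_k \lesssim 1/\lambda_k$, so all that remains is the spectral norm of the commutator. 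Applying the eigenvector equations $U_J\t \Sigma_{JJ} = \Lambda U_J\t$ and $\hat \Sigma_{JJ} \tilde U_J = \tilde U_J \tilde \Lambda$ produces the key identity
\begin{align*}
\Lambda U_J\t \tilde U_J - U_J\t \tilde U_J \tilde \Lambda = U_J\t (\Sigma_{JJ} - \hat \Sigma_{JJ}) \tilde U_J.
\end{align*}

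I then insert the splitting $\tilde U_J = U_J W_* + (\tilde U_J - U_J W_*)$ to obtain
\begin{align*}
U_J\t (\Sigma_{JJ} - \hat\Sigma_{JJ}) \tilde U_J = U_J\t (\Sigma_{JJ} - \hat\Sigma_{JJ}) U_J W_* + U_J\t (\Sigma_{JJ} - \hat\Sigma_{JJ}) (\tilde U_J - U_J W_*).
\end{align*}
The residual term is treated crudely: since $W_*$ is orthogonal and $\|U_J\| = 1$, its spectral norm is at most $\|\hat \Sigma_{JJ} - \Sigma_{JJ}\|\,\|\tilde U_J - U_J W_*\|$. By Lemma \ref{lem:spectral_bound}, $\|\hat\Sigma_{JJ} - \Sigma_{JJ}\| \lesssim \lambda_1 \sqrt{s\log(p)/n}$, while Proposition \ref{lem:dk} together with the standard sin-theta identity (Lemma \ref{lem:sintheta}) gives $\|\tilde U_J - U_J W_*\| \lesssim \lambda_1 (\lambda_k - \lambda_{k+1})\inv \sqrt{s\log(p)/n}$. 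Multiplying these by the prefactor $\|U\|_{2\to\infty}/\lambda_k$ produces the first term in the target bound.

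For the dominant piece $U_J\t (\hat\Sigma_{JJ} - \Sigma_{JJ}) U_J$, the key point is that it lives on $k$ \emph{fixed} directions, so invoking Lemma \ref{lem:spectral_bound} here is wasteful. I will instead control each of its $k^2$ entries separately. For fixed columns $u = (U_J)_{\cdot a}$ and $v = (U_J)_{\cdot b}$ (viewed as unit vectors in $\R^p$ supported on $J$), Assumption \ref{assumption:randomness} yields $\|u\t X_i\|_{\psi_2} \lesssim \sqrt{\lambda_1}$ and hence $\|(u\t X_i)(v\t X_i)\|_{\psi_1} \lesssim \lambda_1$; Bernstein's inequality then gives $|u\t(\hat\Sigma_{JJ} - \Sigma_{JJ}) v| \lesssim \lambda_1 \sqrt{\log(p)/n}$ with probability at least $1 - p^{-4}$. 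A union bound over the $k^2$ pairs, combined with the crude inequality $\|M\| \le k \|M\|_{\max}$ for $k\times k$ matrices, yields $\|U_J\t(\hat\Sigma_{JJ} - \Sigma_{JJ}) U_J\| \lesssim k \lambda_1 \sqrt{\log(p)/n}$, which multiplied by $\|U\|_{2\to\infty}/\lambda_k$ produces the second term in the bound. The chief subtlety is recognizing that this $U_J W_*$ piece must be attacked by direction-wise Bernstein rather than by Lemma \ref{lem:spectral_bound}; using the latter here would replace the favorable $k \sqrt{\log(p)/n}$ rate by the much coarser $\sqrt{s/n}$ rate and destroy the stated bound.
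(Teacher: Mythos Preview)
Your proof is correct and follows essentially the same approach as the paper: the same $2\to\infty$-to-spectral reduction, the same eigenvector-equation identity $\Lambda U_J\t \tilde U_J - U_J\t \tilde U_J \tilde\Lambda = U_J\t(\Sigma_{JJ}-\hat\Sigma_{JJ})\tilde U_J$, and the same entrywise Bernstein argument on $U_J\t(\Sigma_{JJ}-\hat\Sigma_{JJ})U_J$. The only cosmetic difference is that the paper splits via $\tilde U_J = U_JU_J\t\tilde U_J + (I-U_JU_J\t)\tilde U_J$ rather than your $\tilde U_J = U_JW_* + (\tilde U_J - U_JW_*)$, which is equivalent through the $\sin\Theta$ relations in Lemma~\ref{lem:sintheta}.
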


\begin{restatable}[Bound on $T_2$]{lemma}{ttwo}\label{lem:T2}
We have that
\begin{align*}
  \|U_J( W_* - U_J\t \tilde U_J)\|_{2\to\infty} &\lesssim  \frac{ \|U \|_{2\to\infty} \lambda_1 ^2}{(\lambda_k - \lambda_{k+1})^2} \frac{s \log(p)}{n} \\
  &\equiv \mathcal{E}_2
\end{align*}
with probability at least $1 - O(p^{-4})$. 
\end{restatable}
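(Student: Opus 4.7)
\textbf{Proof plan for Lemma \ref{lem:T2}.} The strategy is to pass the problem from the $2\to\infty$ norm to the spectral norm by the submultiplicativity $\|AB\|_{2\to\infty} \leq \|A\|_{2\to\infty}\|B\|$, and then to bound $\|W_* - U_J\t \tilde U_J\|$ by the square of a $\sin\Theta$ distance using the Procrustes structure of $W_*$. Since $U_J$ is obtained from $U$ by deleting zero rows, $\|U_J\|_{2\to\infty} = \|U\|_{2\to\infty}$, so
\[
\|U_J(W_* - U_J\t \tilde U_J)\|_{2\to\infty} \leq \|U\|_{2\to\infty}\, \|W_* - U_J\t \tilde U_J\|,
\]
and the remaining task is purely spectral.

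The plan is to use the singular value decomposition of the $k \times k$ matrix $U_J\t \tilde U_J$. Writing $U_J\t \tilde U_J = W_1 \cos\Theta\, W_2\t$, where $\cos\Theta := \diag(\cos\theta_1, \dots, \cos\theta_k)$ holds the cosines of the principal angles between the column spaces of $U_J$ and $\tilde U_J$, the Procrustes minimizer is $W_* = W_1 W_2\t$ (this identification is stated in the main text after \eqref{twoinf}). Consequently,
\[
W_* - U_J\t \tilde U_J = W_1\bigl(I - \cos\Theta\bigr) W_2\t,
\qquad
\|W_* - U_J\t \tilde U_J\| = 1 - \cos\theta_{\max}.
\]
Using the elementary inequality $1 - \cos\theta \leq \sin^2\theta$ on $[0,\pi/2]$ and the standard identity $\sin\theta_{\max} = \|U_J U_J\t - \tilde U_J \tilde U_J\t\|$ (see the $\sin\Theta$ equivalences referenced in Appendix \ref{sec:bernstein}), we obtain
\[
\|W_* - U_J\t \tilde U_J\| \leq \|U_J U_J\t - \tilde U_J \tilde U_J\t\|^2.
\]

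Finally, I would plug in Proposition \ref{lem:dk}, which gives $\|U_J U_J\t - \tilde U_J \tilde U_J\t\| \lesssim \frac{\lambda_1}{\lambda_k - \lambda_{k+1}}\bigl(\sqrt{s/n} + \sqrt{\log(p)/n}\bigr)$ on the event of Lemma \ref{lem:spectral_bound}, which has probability at least $1 - O(p^{-4})$. Squaring yields $(s + \log(p))/n$ up to constants, which under Assumption \ref{assumption:dimensions} is dominated by $s\log(p)/n$, producing exactly $\mathcal{E}_2$ after multiplication by $\|U\|_{2\to\infty}$. There is no substantive obstacle here: the only mild care is in (i) correctly identifying $W_*$ with the Procrustes solution and reading off its spectral distance to $U_J\t \tilde U_J$ via the singular values $\cos\theta_i$, and (ii) choosing the right $\sin\Theta$ representation so that Proposition \ref{lem:dk} applies directly. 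The entire argument is deterministic on the high-probability event supplied by Lemma \ref{lem:spectral_bound}.
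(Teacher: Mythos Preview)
Your proposal is correct and is essentially the same argument as the paper's proof: both factor out $\|U_J\|_{2\to\infty}$, use the SVD of $U_J\t\tilde U_J$ together with the Procrustes identity $W_*=W_1W_2\t$ to get $\|W_*-U_J\t\tilde U_J\|=\max_j(1-\cos\theta_j)\le\max_j\sin^2\theta_j=\|U_JU_J\t-\tilde U_J\tilde U_J\t\|^2$, and then invoke Proposition~\ref{lem:dk}. The only cosmetic difference is that the paper directly uses the simplified form $\|U_JU_J\t-\tilde U_J\tilde U_J\t\|\lesssim\frac{\lambda_1}{\lambda_k-\lambda_{k+1}}\sqrt{s\log(p)/n}$ (noted right after Proposition~\ref{lem:dk}), whereas you square $\sqrt{s/n}+\sqrt{\log(p)/n}$ and then absorb it into $s\log(p)/n$; both are equivalent up to constants.
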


\ \\
\noindent
\textbf{Expanding Equation \eqref{init} into $T_3$ and $T_4$:}\\ \ \\
\noindent
We further expand the remaining term in \eqref{init} by viewing $\hat \Sigma_{JJ}$ as a perturbation of $U_J U_J\t \Sigma_{JJ}$ and using the eigenvector-eigenvalue equation via
\begin{align*}
A &=     (\tilde U_J \tilde \Lambda-  U_J \Lambda U_J\t \tilde U_J) \tilde \Lambda\inv \\
&= (\hat \Sigma_{JJ} \tilde U_J -  \Sigma_{JJ} U_J U_J\t \tilde U_J) \tilde \Lambda\inv \\
    &= (  U_J U_J\t \Sigma_{JJ}  \tilde U_J+ (\hat \Sigma_{JJ} - U_J U_J\t \Sigma_{JJ} ) \tilde U_J - \Sigma_{JJ} U_J U_J\t \tilde U_J) \tilde \Lambda\inv \\
    &= U_J U_J\t \Sigma_{JJ} (\tilde U_J - U_J U_J\t \tilde U_J) \tilde \Lambda\inv + (\hat \Sigma_{JJ} -U_J U_J\t \Sigma_{JJ} )\tilde U_J \tilde \Lambda\inv \\
    &=  T_3 +  T_4,
\end{align*}
where
\begin{align*}
    T_3 &:= U_J U_J\t\Sigma_{JJ} (\tilde U_J - U_J U_J\t \tilde U_J) \tilde \Lambda\inv \\
    T_4 &:=  (\hat \Sigma_{JJ} - U_J U_J\t \Sigma_{JJ} )\tilde U_J \tilde \Lambda\inv.
\end{align*}
The term $ T_3$  can be analyzed via techniques from complex analysis.  We present this bound as a lemma, but defer the proof to Section \ref{sec:j1proof}. 
\begin{restatable}[Bound on $T_3$]{lemma}{three}\label{lem:J1}
We have that
\begin{align*}
    \| U_J U_J\t \Sigma_{JJ} (\tilde U_J - U_J U_J\t \tilde U_J) \tilde \Lambda\inv \|_{2\to\infty} &\lesssim     \sqrt{\frac{s \log(p)}{n}} \frac{\lambda_1^{3/2}}{\lambda_k (\lambda_k - \lambda_{k+1})}  \min\bigg( \|\Sigma\|_{\max}^{1/2}, \sqrt{\lambda_1} \| U \|_{2\to\infty} \bigg)\\
  &\equiv \mathcal{E}_3
\end{align*}
with probability at least $1 - O(p^{-3})$.  
\end{restatable}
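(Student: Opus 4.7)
The plan is to rewrite $T_3$ as $U_J U_J\t \Sigma_{JJ}$ multiplied on the right by a projection difference times $\tilde U_J \tilde \Lambda\inv$, and then invoke submultiplicativity of the $2 \to \infty$ norm together with Proposition \ref{lem:dk} and Lemma \ref{lem:eigengaps}. Using $\tilde U_J = \tilde U_J \tilde U_J\t \tilde U_J$, one writes
\begin{align*}
    T_3 = U_J U_J\t \Sigma_{JJ}\, \bigl( \tilde U_J \tilde U_J\t - U_J U_J\t \bigr) \, \tilde U_J \tilde \Lambda\inv.
\end{align*}
The eigenvalue equation $\Sigma_{JJ} U_J = U_J \Lambda$ from the start of Section \ref{sec:2} gives $U_J U_J\t \Sigma_{JJ} = U_J \Lambda U_J\t$, and the submultiplicative inequality $\|AB\|_{2\to\infty} \leq \|A\|_{2\to\infty} \|B\|$ then yields
\begin{align*}
    \| T_3 \|_{2\to\infty} \leq \| U_J \Lambda U_J\t \|_{2\to\infty} \cdot \| \tilde U_J \tilde U_J\t - U_J U_J\t \| \cdot \| \tilde U_J \| \cdot \| \tilde \Lambda\inv \|.
\end{align*}

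Next, the last three factors are bounded using already-established preliminaries: Proposition \ref{lem:dk} controls the projection difference by $\frac{\lambda_1}{\lambda_k - \lambda_{k+1}}\sqrt{s \log(p)/n}$ on an event of probability at least $1 - O(p^{-4})$; Lemma \ref{lem:eigengaps} gives $\| \tilde \Lambda\inv \| = 1/\tilde \lambda_k \lesssim 1/\lambda_k$ on the same event; and $\| \tilde U_J \| = 1$ since its columns are orthonormal.

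For the remaining factor $\| U_J \Lambda U_J\t \|_{2 \to \infty}$, I would derive two competing bounds and take the minimum to recover the $\min$ appearing in $\mathcal{E}_3$. The coarse bound $\| U_J \Lambda U_J\t \|_{2\to\infty} \leq \lambda_1 \| U \|_{2\to\infty}$ follows directly from submultiplicativity applied to $U_J \cdot \Lambda U_J\t$. A sharper bound comes from a row-wise computation:
\begin{align*}
    \| (U_J \Lambda U_J\t)_{i \cdot} \|^2 = (U_J \Lambda^2 U_J\t)_{ii} \leq \lambda_1 (U_J \Lambda U_J\t)_{ii} \leq \lambda_1 (\Sigma_{JJ})_{ii} \leq \lambda_1 \| \Sigma \|_{\max},
\end{align*}
where the first inequality uses the PSD ordering $\Lambda^2 \preccurlyeq \lambda_1 \Lambda$ and the second uses $\Sigma_{JJ} - U_J \Lambda U_J\t \succcurlyeq 0$ (this residual equals $U_\perp \Lambda_\perp U_\perp\t \succcurlyeq 0$ since $\Sigma_{JJ}$ is PSD). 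Taking the minimum of the two bounds produces $\min\bigl( \sqrt{\lambda_1 \| \Sigma \|_{\max}}, \lambda_1 \| U \|_{2\to\infty} \bigr) = \sqrt{\lambda_1} \cdot \min\bigl( \| \Sigma \|_{\max}^{1/2}, \sqrt{\lambda_1} \| U \|_{2\to\infty} \bigr)$. Combining all four factors yields $\| T_3 \|_{2\to\infty} \lesssim \mathcal{E}_3$ on an event of probability at least $1 - O(p^{-3})$ after a union bound over the events of Proposition \ref{lem:dk} and Lemma \ref{lem:eigengaps}.

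The main technical subtlety is the sharper row-wise bound $\| U_J \Lambda U_J\t \|_{2\to\infty} \leq \sqrt{\lambda_1 \| \Sigma \|_{\max}}$, which requires pairing the two copies of $\Lambda^{1/2}$ so that one produces a factor of $\lambda_1$ while the other recombines with $U_J U_J\t$ to produce a diagonal entry of $\Sigma_{JJ}$; everything else reduces straightforwardly to the preliminary bounds.
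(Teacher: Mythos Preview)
Your proof is correct and, in fact, considerably more elementary than the paper's. Both arguments start the same way: rewrite $T_3 = U_J\Lambda U_J\t(\tilde U_J\tilde U_J\t - U_JU_J\t)\tilde U_J\tilde\Lambda\inv$, pull out $\|U_J\Lambda^{1/2}\|_{2\to\infty}$ (bounded by either $\sqrt{\lambda_1}\|U\|_{2\to\infty}$ or $\|\Sigma\|_{\max}^{1/2}$ exactly as you do), and reduce to controlling $\|\Lambda^{1/2}U_J\t(\tilde U_J\tilde U_J\t - U_JU_J\t)\|$. From there the paper invokes the holomorphic functional calculus: it writes the projection difference as a contour integral $-\frac{1}{2\pi i}\oint_{\mathcal{C}}(\Sigma_{JJ}-zI)\inv(\hat\Sigma_{JJ}-\Sigma_{JJ})(\hat\Sigma_{JJ}-zI)\inv\,dz$, carefully chooses the contour, and after a page of estimates along $\mathcal{C}$ arrives at
\[
\|\Lambda^{1/2}U_J\t(\tilde U_J\tilde U_J\t - U_JU_J\t)\|\ \lesssim\ \frac{\sqrt{\lambda_1}}{\lambda_k-\lambda_{k+1}}\,\|(\hat\Sigma_{JJ}-\Sigma_{JJ})U_J\|.
\]
It then bounds $\|(\hat\Sigma_{JJ}-\Sigma_{JJ})U_J\|\le\|\hat\Sigma_{JJ}-\Sigma_{JJ}\|$ via Lemma~\ref{lem:spectral_bound}. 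You instead apply submultiplicativity and Proposition~\ref{lem:dk} directly, obtaining $\sqrt{\lambda_1}\cdot\|\tilde U_J\tilde U_J\t - U_JU_J\t\|\lesssim \sqrt{\lambda_1}\cdot\frac{\|\hat\Sigma_{JJ}-\Sigma_{JJ}\|}{\lambda_k-\lambda_{k+1}}$, which is the \emph{same} bound. The contour-integral machinery would pay off only if one exploited a sharper estimate on $\|(\hat\Sigma_{JJ}-\Sigma_{JJ})U_J\|$ than the full spectral norm; since the paper does not, your direct Davis--Kahan route delivers the stated $\mathcal{E}_3$ with far less effort. Your row-wise derivation of $\|U_J\Lambda U_J\t\|_{2\to\infty}\le\sqrt{\lambda_1\|\Sigma\|_{\max}}$ via $\Lambda^2\preccurlyeq\lambda_1\Lambda$ and $U_J\Lambda U_J\t\preccurlyeq\Sigma_{JJ}$ is also cleaner than the paper's.
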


\ \\
\noindent
\textbf{Expanding $T_4$ in terms of $J_1$ and $J_2$:}\\ \ \\
\noindent
We now proceed to bound $T_4$.  We have by Lemma \ref{lem:eigengaps} and properties of the $2\to\infty$ norm that
\begin{align}
   \| (\hat \Sigma_{JJ} - U_J U_J\t \Sigma_{JJ})\tilde U_J \tilde \Lambda\inv \|_{2\to\infty} &\leq \frac{1}{\tilde \lambda_k} \|(\hat \Sigma_{JJ} - U_J U_J\t \Sigma_{JJ})\tilde U_J \|_{2\to\infty} \nonumber \\
   &\lesssim \frac{1}{\lambda_k} \|(\hat \Sigma_{JJ} - U_J U_J\t \Sigma_{JJ})\tilde U_J \|_{2\to\infty}.\label{m2}
\end{align}
Note that $\tilde U_J$ is the matrix of eigenvectors of $\hat \Sigma_{JJ}$ and hence is not independent of the error matrix $\hat \Sigma_{JJ} - U_J U_J\t \Sigma_{JJ}$, so one cannot bound the maximum row norm of the matrix above with standard concentration techniques. 
Let $U_{\perp}$ be the matrix such that $[U_J, U_{\perp}]$ is an $s \times s$ orthogonal matrix, and let $\tilde U_{\perp}$ be defined similarly. Define also $\Lambda_{\perp}$ and $\tilde\Lambda_{\perp}$ as the matrix of bottom $s - k$ eigenvalues of $\Sigma_{JJ}$ and $\hat \Sigma_{JJ}$ respectively.  Since $\tilde U_{\perp}\t \tilde U_J = 0$, we have that
\begin{align}
   \frac{1}{\lambda_k} \|(\hat \Sigma_{JJ} -& U_J U_J\t \Sigma_{JJ}) \tilde U_J \|_{2\to\infty} \nonumber \\
   &=  \frac{1}{\lambda_k} \bigg\|\bigg( \tilde U_J \tilde \Lambda \tilde U_{J}\t + \tilde U_{\perp} \tilde \Lambda_{\perp} \tilde U_{\perp}\t  - U_J \Lambda_J U_J\t\bigg) \tilde U_J\bigg\|_{2\to\infty}  \nonumber
    \\
     &\leq  \frac{1}{\lambda_k} \bigg\|\bigg( \tilde U_J \tilde \Lambda \tilde U_{J}\t+\tilde U_{\perp} \tilde \Lambda_{\perp} \tilde U_{\perp}\t   - U_J \Lambda_J U_J\t  - U_{\perp} \Lambda_{\perp} U_{\perp}\t \bigg) \tilde U_J\bigg\|_{2\to\infty} + \frac{1}{\lambda_k} \| U_{\perp} \Lambda_{\perp} U_{\perp}\t  \tilde U_J\|_{2\to\infty} \nonumber
    \\
   &\leq \frac{1}{\lambda_k}\|(\hat \Sigma_{JJ} -  \Sigma_{JJ}) \tilde U_J \|_{2\to\infty} + \frac{1}{\lambda_k}\| U_{\perp} \Lambda_{\perp}  U_{\perp}\t \tilde U_J \|_{2\to\infty}  \nonumber \\
    :&=\frac{1}{\lambda_k} \| J_{1} \|_{2\to\infty} + \frac{1}{\lambda_k}\| J_{2} \|_{2\to\infty}\label{twoterms}
\end{align}
where
\begin{align*}
    J_1 :&= (\hat \Sigma_{JJ} -  \Sigma_{JJ}) \tilde U_J;  \\
    J_2 :&=   U_{\perp} \Lambda_{\perp}  U_{\perp}\t \tilde U_J.
\end{align*}
The term $J_2$  can be bounded in the following lemma, but it is rather technical; moreover, it requires some analysis that is relatively novel in the subspace estimation literature; in particular, we combine some ideas from \citet{xia_statistical_2020} as well as \citet{cape_signal-plus-noise_2019,xie_bayesian_2019,tang_eigenvalues_2018,tang_asymptotically_2017}.  The proof is in Section \ref{sec:perpboundproof}. 

\begin{restatable}[Bound on $J_2$]{lemma}{uperp}
\label{lem:perp_bound}
The term $J_2$ satisfies
\begin{align*}
    \| U_{\perp} \Lambda_{\perp}  U_{\perp}\t \tilde U_J \|_{2\to\infty} &\lesssim 
    \kappa^2 \lambda_{k+1} \sqrt{\frac{k \log (p)}{n}} + 
    \lambda_{k+1}\kappa^3 \frac{s\log(p)}{n} \\
    &\lesssim  \mathcal{E}_4 \lambda_k
\end{align*}
with probability at least $1 - O(p^{-3})$.  
\end{restatable}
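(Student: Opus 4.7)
The plan is to develop a Neumann-type series expansion for $\tilde U_J$ around the rank-$k$ matrix $U_J \Lambda U_J\t$, then analyze each term in $2\to\infty$ norm. Setting $E := \hat \Sigma_{JJ} - U_J \Lambda U_J\t$ and iterating the eigenvalue identity $\tilde U_J = \hat \Sigma_{JJ} \tilde U_J \tilde \Lambda\inv$ yields, for any $M \geq 1$,
\[
    \tilde U_J = \sum_{m=0}^{M-1} E^m\, U_J \Lambda U_J\t\, \tilde U_J\, \tilde \Lambda^{-(m+1)} + E^M\, \tilde U_J\, \tilde \Lambda^{-M}.
\]
Multiplying on the left by $U_\perp \Lambda_\perp U_\perp\t$ annihilates the $m=0$ summand because $U_\perp\t U_J = 0$, so only the $m \geq 1$ terms and the remainder survive.

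The technical heart of the argument is that although $E$ is not mean zero (indeed, $\E E = U_\perp \Lambda_\perp U_\perp\t$), the product $E U_J = (\hat \Sigma_{JJ} - \Sigma_{JJ}) U_J$ \emph{is} mean zero, because $\Sigma_{JJ} U_J = U_J \Lambda$. I therefore rewrite each summand as
\[
    U_\perp \Lambda_\perp U_\perp\t\, E^{m-1}\, (\hat \Sigma_{JJ} - \Sigma_{JJ})\, U_J\, \cdot\, \Lambda U_J\t \tilde U_J \tilde \Lambda^{-(m+1)},
\]
isolating a mean-zero random block. To handle the left factor $U_\perp\t E^{m-1}$, I iteratively apply the identity $U_\perp\t E = \Lambda_\perp U_\perp\t + U_\perp\t (\hat \Sigma_{JJ} - \Sigma_{JJ})$, which peels off deterministic powers of $\Lambda_\perp$ from additional random $(\hat \Sigma_{JJ} - \Sigma_{JJ})$ factors.

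For the leading $m=1$ term, I proceed row by row: the $i$-th row of $U_\perp \Lambda_\perp U_\perp\t$ is a deterministic vector $a_i\t$ satisfying $\|a_i\|^2 = e_i\t U_\perp \Lambda_\perp^2 U_\perp\t e_i \leq \lambda_{k+1}^2$ and, crucially, $a_i\t \Sigma_{JJ} a_i = e_i\t U_\perp \Lambda_\perp^3 U_\perp\t e_i \leq \lambda_{k+1}^3$. A Bernstein-type inequality for quadratic forms of subgaussian vectors then gives, uniformly over $i\in\{1,\ldots,s\}$ and each column $u_l$ of $U_J$, that $|a_i\t (\hat \Sigma_{JJ} - \Sigma_{JJ}) u_l| \lesssim \lambda_{k+1}^{3/2} \lambda_1^{1/2} \sqrt{\log(p)/n}$ with probability at least $1 - p^{-c}$. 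Aggregating over the $k$ columns supplies the sharp $\sqrt{k\log(p)/n}$ factor, and combining with $\|\Lambda U_J\t \tilde U_J \tilde \Lambda^{-2}\| \lesssim \lambda_1/\lambda_k^2$ produces the leading $\lambda_{k+1}\kappa^2\sqrt{k\log(p)/n}$ contribution. Higher-order terms ($m \geq 2$) pick up additional factors of $\|\hat \Sigma_{JJ} - \Sigma_{JJ}\| \lesssim \lambda_1\sqrt{s/n}$ via Lemma \ref{lem:spectral_bound}, yielding the secondary $\lambda_{k+1}\kappa^3 s\log(p)/n$ contribution from the squared perturbation. Assumption \ref{assumption:eigenvalues} guarantees $\|E\|/\tilde\lambda_k \leq c < 1$ for some universal $c$, so both the interior tail of the series and the final residual $E^M \tilde U_J \tilde\Lambda^{-M}$ decay geometrically in $M$; choosing $M = O(\log n)$ renders them negligible and the claimed probability $1 - O(p^{-3})$ follows from a union bound over the $O(\log n)$ high-probability events.

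The principal obstacle is the non-mean-zero character of $E$, which prevents a direct application of the series techniques in \citet{cape_signal-plus-noise_2019,tang_asymptotically_2017,xie_bayesian_2019}. The factoring $E^m U_J = E^{m-1}(\hat \Sigma_{JJ} - \Sigma_{JJ})U_J$ together with the iterated decomposition of $U_\perp\t E$ restores a mean-zero block but requires careful bookkeeping of powers of $\lambda_{k+1}$ versus $\lambda_k$ to ensure the final bound is proportional to $\lambda_{k+1}$ rather than $\lambda_k$. A second subtlety is obtaining the sharp $\sqrt{k \log(p)/n}$ rather than the coarser $\sqrt{s/n}$; this requires exploiting the rank-$k$ structure of $U_J$ via a column-wise union bound in the Bernstein step, instead of bounding $\|(\hat \Sigma_{JJ} - \Sigma_{JJ}) U_J\|$ directly in spectral norm.
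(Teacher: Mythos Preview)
Your proposal is correct and follows the same overall architecture as the paper: the same Neumann-type series for $\tilde U_J$ in powers of $E=\hat\Sigma_{JJ}-U_J\Lambda U_J\t$, the same key observation that $EU_J=(\hat\Sigma_{JJ}-\Sigma_{JJ})U_J$ is mean-zero, the same noncommutative-binomial decomposition of $U_\perp\t E^{m-1}$ into single-$\Delta$ versus multi-$\Delta$ pieces (your iterated identity $U_\perp\t E=\Lambda_\perp U_\perp\t+U_\perp\t\Delta$ is exactly this), and the same truncation at $O(\log p)$ terms with a geometric tail. The one substantive difference is how the leading $m=1$ term $U_\perp\Lambda_\perp U_\perp\t(\hat\Sigma_{JJ}-\Sigma_{JJ})U_J$ is controlled: the paper expands $\hat\Sigma_{JJ}$ into four blocks via $X=Y\Sigma^{1/2}$ (those involving $Y_J,Y_{J^c},(\Sigma^{1/2})_{JJ},\Sigma_{JJ^c}^{1/2}$) and applies Hanson--Wright and Bernstein separately to each, whereas you treat the $i$-th row $a_i\t=e_i\t U_\perp\Lambda_\perp U_\perp\t$ directly and use $\|a_i\t X_q[J]\|_{\psi_2}^2=a_i\t\Sigma_{JJ}a_i\le\lambda_{k+1}^3$ in a single Bernstein step. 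Your route is cleaner and even gives the slightly sharper prefactor $\lambda_{k+1}^{3/2}\lambda_1^{1/2}$ in place of the paper's $\lambda_{k+1}\lambda_1$; both are dominated by $\lambda_{k+1}\kappa^2\sqrt{k\log(p)/n}$ after multiplying by $\|\Lambda U_J\t\tilde U_J\tilde\Lambda^{-2}\|\lesssim\lambda_1/\lambda_k^2$. One caution: your sentence ``higher-order terms pick up additional factors of $\|\hat\Sigma_{JJ}-\Sigma_{JJ}\|$'' glosses over the single-$\Delta$ pieces $U_\perp\Lambda_\perp^m U_\perp\t\Delta U_J$ for $m\ge2$, which require the same concentration as $m=1$ (now with $\lambda_{k+1}^m$) and then sum geometrically into the first term---the paper handles these explicitly and it is what forces the union bound over $O(\log p)$ events, so make that bookkeeping explicit.
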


\ \\
\noindent \textbf{Further expanding the term $J_1$:}\\ \ \\
\noindent
What remains is to bound the first term of \eqref{twoterms}; i.e. the term $J_1$. First, note that by Assumption \ref{assumption:randomness}, each vector $X_i \in \R^p$ is of the form $X_i = \Sigma^{1/2} Y_i$, where $\E Y_i Y_i\t = I$.  Let $X$ be the $n\times p$ matrix whose rows are the $X_i$'s; it follows that $X = Y\Sigma^{1/2}$.  Let $Y$ be partitioned via $Y = [Y_J, Y_{J^c}]$, where $Y_J$ is the $n \times s$ matrix of variables corresponding to those in $J$, and $Y_{J^c}$ is the $n \times p - s$ matrix of the other variables.  Define through slight abuse of notation the matrix $\Sigma^{1/2}_{JJ^c} := (\Sigma^{1/2})_{JJ^c}$.  With these notations in place, we observe that since $\hat \Sigma = \frac{1}{n}(X\t X)$ we have the block structure
\begin{align*}
    \hat \Sigma_{JJ} = \frac{1}{n}\bigg(
(\Sigma^{1/2})_{JJ} Y_J\t Y_J (\Sigma^{1/2})_{JJ} + \Sigma_{JJ^c}^{1/2} Y_{J^c}\t Y_J (\Sigma^{1/2})_{JJ} + (\Sigma^{1/2})_{JJ} Y_J\t Y_{J^c} (\Sigma_{J J^c}^{1/2})\t + \Sigma_{JJ^c}^{1/2} Y_{J^c}\t Y_{J^c} (\Sigma_{JJ^c}^{1/2})\t\bigg). 
\end{align*}

Therefore, we observe that
\begin{align}
   ( \hat \Sigma_{JJ} - \Sigma_{JJ} ) \tilde U_J 
    &= \frac{1}{n}\bigg(
(\Sigma^{1/2})_{JJ} (Y_J\t Y_J - n I) (\Sigma^{1/2})_{JJ} + \Sigma_{JJ^c}^{1/2} Y_{J^c}\t Y_J (\Sigma^{1/2})_{JJ} \nonumber \\
&\qquad \qquad+ (\Sigma^{1/2})_{JJ} Y_J\t Y_{J^c} (\Sigma_{J J^c}^{1/2})\t + \Sigma_{JJ^c}^{1/2} (Y_{J^c}\t Y_{J^c} - n I) (\Sigma_{JJ^c}^{1/2})\t\bigg) \tilde U_J. \label{blocks}
\end{align}
%

Here the identity matrices are of size $s$ and $p - s$ respectively in order of appearance.  In light of the structure in \eqref{blocks}, define the matrices
 \begin{align*}
    K_1 :&= \frac{1}{n}(\Sigma^{1/2})_{JJ} (Y_J\t Y_J - n I)(\Sigma^{1/2})_{JJ} \tilde U_J;\\
    K_2 :&=\frac{1}{n} \Sigma_{JJ^c}^{1/2} Y_{J^c}\t Y_J (\Sigma^{1/2})_{JJ} \tilde U_J;\\
    K_3 :&= \frac{1}{n}(\Sigma^{1/2})_{JJ} Y_J\t Y_{J^c} (\Sigma_{J J^c}^{1/2})\t \tilde U_J;\\
    K_4 :&= \frac{1}{n}\Sigma_{JJ^c}^{1/2} (Y_{J^c}\t Y_{J^c}- nI) (\Sigma_{JJ^c}^{1/2})\t\tilde U_J.  
\end{align*}
Then
\begin{align*}
    J_1 = (\hat \Sigma_{JJ} - \Sigma_{JJ}) \tilde U_J = K_1 + K_2 + K_3 + K_4.
\end{align*}

We have lemmas that bound the $2\to\infty$ norms of each of these matrices. Each of these bounds follows essentially the same set of steps:
\begin{enumerate}
    \item Bound the $2\to\infty$ norm using properties of the $2\to\infty$ norm in terms of the maximum entry.
    \item Write each entry as a sum of mean-zero subexponential random variables and use either Bernstein's inequality or the Hanson-Wright inequality (see Appendix \ref{sec:bernstein}) to bound the result.
\end{enumerate}
The proofs for these lemmas are in Sections \ref{sec:final_bounds} and \ref{sec:proofsk3k4}.  Recall that we define $\mathcal{E}_5$ via
\begin{align*}
    \mathcal{E}_5 :&= \frac{\kappa \lambda_1}{\lambda_k - \lambda_{k+1}} \frac{s\log(p)}{n} + \kappa \sqrt{\frac{k\log(p)}{n}} \\
    &\equiv \frac{1}{\lambda_k}\bigg( \frac{\lambda_1^2}{\lambda_k - \lambda_{k+1}} \frac{s\log(p)}{n}+ \lambda_1 \sqrt{\frac{k\log(p)}{n}}\bigg).  
\end{align*}

\begin{restatable}[The matrix $K_1$]{lemma}{kone}
\label{lem:K1}
The matrix $K_1$ satisfies
\begin{align*}
    \| \frac{1}{n}(\Sigma^{1/2})_{JJ} (Y_J\t Y_J - n I)(\Sigma^{1/2})_{JJ} \tilde U \|_{2\to\infty} &\lesssim \frac{\lambda_1^2}{\lambda_k - \lambda_{k+1}} \frac{ s \log(p)}{n} + \lambda_1   \sqrt{\frac{k\log(p)}{n}} \\
    &\lesssim \mathcal{E}_5 \lambda_k
\end{align*}
with probability at least $1 - O(p^{-4})$.\end{restatable}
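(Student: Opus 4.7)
The plan is to handle the dependence between $\tilde U_J$ and $Y_J$ by a decoupling step, then reduce both resulting terms to standard concentration arguments. Concretely, I would first write
\begin{align*}
    K_1 = \underbrace{\frac{1}{n}(\Sigma^{1/2})_{JJ} (Y_J\t Y_J - n I)(\Sigma^{1/2})_{JJ} U_J W_*}_{=: K_{1,a}} + \underbrace{\frac{1}{n}(\Sigma^{1/2})_{JJ} (Y_J\t Y_J - n I)(\Sigma^{1/2})_{JJ} (\tilde U_J - U_J W_*)}_{=: K_{1,b}}.
\end{align*}
Here the matrix $U_J$ in $K_{1,a}$ is deterministic, so the only randomness in that term lives in the quadratic form $Y_J\t Y_J - nI$. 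The factor $W_*$ is data-dependent but orthogonal, and since $\|MW\|_{2\to\infty}=\|M\|_{2\to\infty}$ for orthogonal $W$, it can be discarded when passing to the $2\to\infty$ norm.

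For $K_{1,a}$, I would use $\|K_{1,a}\|_{2\to\infty}\leq \sqrt{k}\|K_{1,a}\|_{\max}$ and examine a typical entry $(K_{1,a})_{ij}=\tfrac{1}{n}a_i\t(Y_J\t Y_J - nI)b_j$ with $a_i = ((\Sigma^{1/2})_{JJ})\t e_i$ and $b_j = (\Sigma^{1/2})_{JJ} U_J e_j$. This is a centered quadratic form whose summands are products of two subgaussian variables, hence subexponential with $\psi_1$-norm $\lesssim \|a_i\|\|b_j\|\leq \lambda_1$. Bernstein's inequality (Appendix \ref{sec:bernstein}) with $t\asymp \log(p)$, together with a union bound over at most $sk\leq p^2$ entries, yields $\|K_{1,a}\|_{\max}\lesssim \lambda_1\sqrt{\log(p)/n}$ with probability at least $1-O(p^{-4})$, hence $\|K_{1,a}\|_{2\to\infty}\lesssim \lambda_1\sqrt{k\log(p)/n}$, matching the second term of the claimed bound.

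For $K_{1,b}$, I would use the submultiplicativity $\|AB\|_{2\to\infty}\leq \|A\|_{2\to\infty}\|B\|$ followed by $\|\cdot\|_{2\to\infty}\leq \|\cdot\|$ to obtain
\begin{align*}
    \|K_{1,b}\|_{2\to\infty} \leq \tfrac{1}{n}\|(\Sigma^{1/2})_{JJ}\|\,\|Y_J\t Y_J - nI\|\,\|(\Sigma^{1/2})_{JJ}\|\,\|\tilde U_J - U_J W_*\|.
\end{align*}
The spectral factors $\|(\Sigma^{1/2})_{JJ}\|\leq \sqrt{\lambda_1}$ are immediate. A standard subgaussian covariance concentration argument (essentially the $p=s$ case of Lemma \ref{lem:spectral_bound}, applied to the identity-covariance matrix $Y_J$) gives $\tfrac{1}{n}\|Y_J\t Y_J-nI\|\lesssim \sqrt{s/n}+\sqrt{\log(p)/n}$ with probability $1-O(p^{-4})$. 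For the remaining factor, I invoke Proposition \ref{lem:dk}, which via the standard equivalence $\|\tilde U_J - U_J W_*\|\lesssim \|U_JU_J\t-\tilde U_J\tilde U_J\t\|$ (Appendix \ref{sec:bernstein}) yields $\|\tilde U_J - U_J W_*\|\lesssim \tfrac{\lambda_1}{\lambda_k-\lambda_{k+1}}\sqrt{s\log(p)/n}$. Multiplying these estimates and using $s\log(p)\ll n$ together with $s\geq \log p$ (Assumption \ref{assumption:dimensions}) to absorb the lower-order cross terms produces $\|K_{1,b}\|_{2\to\infty}\lesssim \tfrac{\lambda_1^2}{\lambda_k-\lambda_{k+1}}\cdot\tfrac{s\log(p)}{n}$, which is the first term of the bound. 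Summing the two contributions and taking a union bound over the $O(1)$ good events completes the proof.

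The main obstacle is precisely the data-dependence of $\tilde U_J$ and $W_*$; naively trying to concentrate $a_i\t(Y_J\t Y_J-nI)b_j$ with $b_j$ a column of $(\Sigma^{1/2})_{JJ}\tilde U_J$ fails. The decoupling step above resolves this, with the orthogonal invariance of the $2\to\infty$ norm making the $W_*$-dependence in $K_{1,a}$ harmless, while the residual $K_{1,b}$ is controlled by the Davis--Kahan bound already at our disposal. The remaining work is purely the careful bookkeeping required to verify that both terms fit inside $\mathcal{E}_5\lambda_k$.
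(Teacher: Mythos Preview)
Your argument is correct and follows essentially the same strategy as the paper's: peel off from $\tilde U_J$ a piece pointing in a deterministic direction (so that entrywise concentration applies) and control the remainder via the Davis--Kahan bound of Proposition~\ref{lem:dk}. The differences are only in implementation: the paper decomposes via $I = U_JU_J^\top + U_\perp U_\perp^\top$ rather than your $\tilde U_J = U_JW_* + (\tilde U_J - U_JW_*)$, invokes Hanson--Wright instead of Bernstein for the $\|\cdot\|_{\max}$ bound on the leading piece, and handles the residual through $\sqrt{s}\,\|\cdot\|_{\max}\,\|U_\perp^\top\tilde U_J\|$ rather than a direct spectral-norm estimate; all of these are interchangeable up to constants. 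One minor correction: Assumption~\ref{assumption:dimensions} does not actually assert $s\geq\log p$, but you do not need it, since both cross terms $\sqrt{s/n}\cdot\sqrt{s\log(p)/n}=s\sqrt{\log p}/n$ and $\sqrt{\log(p)/n}\cdot\sqrt{s\log(p)/n}=\sqrt{s}\,\log(p)/n$ are already at most $s\log(p)/n$.
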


\begin{restatable}[The matrix $K_2$]{lemma}{ktwo}
\label{lem:K2}
The matrix $K_2$ satisfies
\begin{align*}
 \| \frac{1}{n} \Sigma_{JJ^c}^{1/2} Y_{J^c}\t Y_J (\Sigma^{1/2})_{JJ} \tilde U \|_{2\to\infty} &\lesssim \lambda_1  \sqrt{\frac{k\log(p)}{n}} +\frac{\lambda_1^{2} }{\lambda_k -\lambda_{k+1}} \frac{s  \log(p)}{n} \\
 &\lesssim \mathcal{E}_5 \lambda_k
\end{align*}
with probability at least $1 - O(p^{-4})$.
\end{restatable}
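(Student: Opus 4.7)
The main difficulty in bounding $K_2$ is that $\tilde U_J$ depends on $\hat\Sigma_{JJ}$, hence on both $Y_J$ and $Y_{J^c}$, so we cannot directly condition on either block to decouple the bilinear form $\tfrac{1}{n}Y_{J^c}\t Y_J$ from $\tilde U_J$. My plan is to circumvent this via the orthogonal splitting $\tilde U_J = U_J U_J\t \tilde U_J + (I - U_JU_J\t)\tilde U_J$ and write
\begin{align*}
K_2 &= \underbrace{\tfrac{1}{n}\Sigma_{JJ^c}^{1/2} Y_{J^c}\t Y_J (\Sigma^{1/2})_{JJ} U_J \cdot U_J\t \tilde U_J}_{K_2^{(1)}} + \underbrace{\tfrac{1}{n}\Sigma_{JJ^c}^{1/2} Y_{J^c}\t Y_J (\Sigma^{1/2})_{JJ} (I - U_JU_J\t)\tilde U_J}_{K_2^{(2)}}.
\end{align*}
In $K_2^{(1)}$ the random bilinear factor is sandwiched between the deterministic matrices $\Sigma_{JJ^c}^{1/2}$ and $(\Sigma^{1/2})_{JJ}U_J$, with a benign multiplier $U_J\t \tilde U_J$ of spectral norm at most one; in $K_2^{(2)}$, the random matrix appears multiplied on the right by $(I-U_JU_J\t)\tilde U_J$, whose operator norm is already controlled by Proposition~\ref{lem:dk}.

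For $K_2^{(1)}$, I would use $\|K_2^{(1)}\|_{2\to\infty}\le \|MU_J\|_{2\to\infty}$ with $M := \tfrac{1}{n}\Sigma_{JJ^c}^{1/2}Y_{J^c}\t Y_J(\Sigma^{1/2})_{JJ}$, and then bound entries through
\begin{align*}
[MU_J]_{ij} = \tfrac{1}{n}\langle Y_{J^c}\, a_i,\, Y_J\, b_j\rangle, \qquad a_i := [(\Sigma_{JJ^c}^{1/2})_{i\cdot}]\t,\quad b_j := [(\Sigma^{1/2})_{JJ}U_J]_{\cdot j}.
\end{align*}
By Assumption~\ref{assumption:randomness}, $Y_J$ and $Y_{J^c}$ are independent, so $Y_{J^c}a_i$ and $Y_J b_j$ are independent centered subgaussian vectors in $\R^n$ whose coordinatewise $\psi_2$ norms are bounded by $\|a_i\|\le \sqrt{\lambda_1}$ and $\|b_j\|\le \sqrt{\lambda_1}$ respectively (both via $\|\Sigma^{1/2}\|^2=\lambda_1$). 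Bernstein's inequality applied to this decoupled inner product (equivalently, Hanson-Wright on a block matrix) yields $|[MU_J]_{ij}|\lesssim \lambda_1\sqrt{\log(p)/n}$ with probability at least $1-O(p^{-6})$ in the regime $s\log(p)\ll n$ (which suppresses the linear Bernstein term). A union bound over the $sk\le p^2$ entries, together with $\|MU_J\|_{2\to\infty}\le \sqrt{k}\|MU_J\|_{\max}$, produces the first term $\lambda_1\sqrt{k\log(p)/n}$.

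For $K_2^{(2)}$, submultiplicativity of the $2\to\infty$ norm gives $\|K_2^{(2)}\|_{2\to\infty} \le \|M\|_{2\to\infty}\,\|(I-U_JU_J\t)\tilde U_J\|$. The second factor is $\lesssim \tfrac{\lambda_1}{\lambda_k-\lambda_{k+1}}\sqrt{s\log(p)/n}$ by Proposition~\ref{lem:dk}. For $\|M\|_{2\to\infty}$, each entry of $M$ is once again a decoupled inner product of the same flavor, so Bernstein yields $\|M\|_{\max}\lesssim \lambda_1\sqrt{\log(p)/n}$ (after a union bound over the $s^2$ entries), hence $\|M\|_{2\to\infty}\le \sqrt{s}\|M\|_{\max}\lesssim \lambda_1\sqrt{s\log(p)/n}$. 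Multiplying the two factors gives the second term $\tfrac{\lambda_1^2}{\lambda_k-\lambda_{k+1}}\tfrac{s\log(p)}{n}$.

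The principal obstacle is the correlation between $\tilde U_J$ and the bilinear form $Y_{J^c}\t Y_J$; the oblique decomposition above resolves it by replacing $\tilde U_J$ with the deterministic $U_J$ in the leading contribution (which reduces to a decoupled inner product amenable to Bernstein) while absorbing all residual dependence into a subspace-error factor controlled by the already-established Davis--Kahan bound. Summing the two contributions and noting that $\lambda_k\mathcal{E}_5 \asymp \lambda_1\sqrt{k\log(p)/n} + \tfrac{\lambda_1^2}{\lambda_k-\lambda_{k+1}}\tfrac{s\log(p)}{n}$ yields the claimed inequality, with total failure probability $O(p^{-4})$ after the union bounds.
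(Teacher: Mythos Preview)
Your proposal is correct and follows essentially the same approach as the paper: both split $\tilde U_J$ via $U_JU_J\t\tilde U_J + (I-U_JU_J\t)\tilde U_J$, bound the first piece entrywise by Bernstein on the decoupled bilinear form (using $\|(\Sigma^{1/2})_{JJ^c}\|_{2\to\infty}\lesssim\sqrt{\lambda_1}$ and $\|(\Sigma^{1/2})_{JJ}U_J\|\le\sqrt{\lambda_1}$) followed by $\|\cdot\|_{2\to\infty}\le\sqrt{k}\|\cdot\|_{\max}$, and handle the second piece by $\sqrt{s}\|\cdot\|_{\max}$ times the Davis--Kahan bound $\|(I-U_JU_J\t)\tilde U_J\|=\|U_\perp\t\tilde U_J\|$ from Proposition~\ref{lem:dk}. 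The only cosmetic difference is that the paper inserts $U_\perp$ explicitly in the second piece before passing to the max-norm, whereas you bound $\|M\|_{\max}$ directly; the resulting union bounds and rates are identical.
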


\begin{restatable}[The matrices $K_3$ and $K_4$]{lemma}{kthree}
\label{lem:K3}
The matrices $K_3$ and $K_4$ satisfy
\begin{align*}
 \|\frac{1}{n}(\Sigma^{1/2})_{JJ} Y_J\t Y_{J^c} (\Sigma_{J J^c}^{1/2})\t \tilde U_J\|_{2\to\infty} &\lesssim   \frac{s \log(p)}{n} \frac{ \lambda_1^2 }{\lambda_k - \lambda_{k+1}} \\
 &\lesssim \mathcal{E}_5\lambda_k ;\\
 \| \frac{1}{n}\Sigma_{JJ^c}^{1/2} (Y_{J^c}\t Y_{J^c}- nI) (\Sigma_{JJ^c}^{1/2})\t\tilde U \|_{2\to\infty} &\lesssim  \frac{s \log(p)}{n} \frac{ \lambda_1^2 }{\lambda_k - \lambda_{k+1}} \\
 &\lesssim  \mathcal{E}_5\lambda_k
\end{align*}
with probability at least $1 - O(p^{-3})$.  
\end{restatable}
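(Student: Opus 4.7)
The plan is to exploit the algebraic identity $U_J\t (\Sigma^{1/2})_{JJ^c} = 0$. This follows because $U$ is supported on $J$, so $\Sigma U = U \Lambda$ forces $U$ to be an invariant subspace of $\Sigma^{1/2}$ as well; writing out $\Sigma^{1/2} U = U \Lambda^{1/2}$ block-wise gives $(\Sigma^{1/2})_{J^c J} U_J = 0$, which is presumably part of Proposition~\ref{prop:sigma_props}. The consequence is that for any matrix $M$,
\begin{align*}
    (\Sigma^{1/2})_{JJ^c}\t M = (\Sigma^{1/2})_{JJ^c}\t (I - U_J U_J\t) M,
\end{align*}
so I can insert a projection $(I - U_J U_J\t)$ on the right of $\tilde U_J$ inside both $K_3$ and $K_4$ for free. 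The spectral norm of $(I - U_J U_J\t) \tilde U_J$ is then controlled by Proposition~\ref{lem:dk}, i.e., $\lesssim \frac{\lambda_1}{\lambda_k - \lambda_{k+1}}\sqrt{s\log(p)/n}$. Since $K_3$ and $K_4$ are $s \times k$ matrices, the coarse bound $\|K_i\|_{2\to\infty} \leq \|K_i\|$ reduces the problem to controlling two spectral norms.

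For $K_4$, after inserting the projection, the leading factor $\frac{1}{n}(\Sigma^{1/2})_{JJ^c}(Y_{J^c}\t Y_{J^c} - nI)(\Sigma^{1/2})_{JJ^c}\t$ is exactly the centered sample covariance of $n$ i.i.d.\ subgaussian vectors $W_i := (\Sigma^{1/2})_{JJ^c} (Y_{J^c})_{i\cdot}\t \in \R^s$ whose population covariance has spectral norm at most $\|(\Sigma^{1/2})_{JJ^c}\|^2 \leq \lambda_1$. A standard subgaussian sample-covariance concentration bound (e.g., Theorem~4.7.1 of \citet{vershynin_high-dimensional_2018}, with a $t = \log(p)$ deviation term) controls this by $\lesssim \lambda_1\bigl(\sqrt{(s+\log p)/n} + (s+\log p)/n\bigr)$ with probability at least $1 - O(p^{-3})$. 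Combining with the $\sin\Theta$ factor and the elementary inequality $\sqrt{(s+\log p)\cdot s \log p} \lesssim s\log p$ (valid under Assumption~\ref{assumption:dimensions}) yields $\|K_4\| \lesssim \frac{\lambda_1^2}{\lambda_k - \lambda_{k+1}} \cdot \frac{s \log p}{n}$.

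For $K_3$, the same manipulation gives $K_3 = \frac{1}{n}(\Sigma^{1/2})_{JJ} Y_J\t Y_{J^c}(\Sigma^{1/2})_{JJ^c}\t (I - U_J U_J\t) \tilde U_J$. Setting $W := Y_{J^c}(\Sigma^{1/2})_{JJ^c}\t$, the middle factor $\frac{1}{n}(\Sigma^{1/2})_{JJ} Y_J\t W$ involves the centered \emph{cross}-covariance $\frac{1}{n}Y_J\t W$ between two independent subgaussian ensembles: the rows of $Y_J$ (covariance $I$) and the rows of $W$ (covariance of spectral norm $\leq \lambda_1$). Independence of $Y_J$ and $Y_{J^c}$ forces the mean to be zero, so an $\eps$-net argument on the unit spheres in $\R^s \times \R^s$---applying Bernstein's inequality to the scalar process $u\t (\frac{1}{n} Y_J\t W) v = \frac{1}{n}\sum_i (Y_J u)_i (W v)_i$, which is a sum of $n$ independent mean-zero subexponentials of $\psi_1$ norm $\lesssim \sqrt{\lambda_1}$---yields $\|\frac{1}{n} Y_J\t W\| \lesssim \sqrt{\lambda_1}\sqrt{(s+\log p)/n}$ with probability at least $1 - O(p^{-3})$. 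Multiplying by $\|(\Sigma^{1/2})_{JJ}\| \leq \sqrt{\lambda_1}$ and the $\sin\Theta$ factor gives the same bound as for $K_4$.

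The main obstacle is that the naive triangle inequality $\|K_3\| \leq \|(\Sigma^{1/2})_{JJ}\| \cdot \frac{1}{n}\|Y_J\t Y_{J^c}\| \cdot \|(\Sigma^{1/2})_{JJ^c}\|$ fails badly: $\frac{1}{n}\|Y_J\t Y_{J^c}\|$ scales as $\sqrt{p/n}$, which is unbounded once $p \gg n$. The identity $U_J\t (\Sigma^{1/2})_{JJ^c} = 0$ is precisely what avoids this---it effectively compresses the $p$-dimensional random factor $Y_{J^c}$ down to the $s$-dimensional random matrix $W$, whose concentration scales with $s$ and $\lambda_1$ instead of with the ambient dimension $p$.
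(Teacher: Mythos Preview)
Your proof is correct and lands on the same bound as the paper. Both arguments pivot on the identical algebraic observation---that $(\Sigma^{1/2})_{JJ^c}^\top U_J=0$, so the factor $(\Sigma^{1/2})_{JJ^c}^\top\tilde U_J$ can be rewritten as $(\Sigma^{1/2})_{JJ^c}^\top(I-U_JU_J^\top)\tilde U_J$ and the $\sin\Theta$ bound from Proposition~\ref{lem:dk} applied. Where you and the paper diverge is in the probabilistic step. You pass to the spectral norm via $\|\cdot\|_{2\to\infty}\le\|\cdot\|$ and invoke operator-norm concentration directly: a standard subgaussian sample-covariance bound for $K_4$ (on the $s$-dimensional vectors $W_i=(\Sigma^{1/2})_{JJ^c}(Y_{J^c})_{i\cdot}^\top$), and an $\eps$-net/Bernstein argument on the rectangular cross-covariance $\frac1n Y_J^\top Y_{J^c}(\Sigma^{1/2})_{JJ^c}^\top$ for $K_3$. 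The paper instead records the SVD $\Sigma_{JJ^c}^{1/2}=U_\perp D V^\top$ explicitly, uses $\|\cdot\|_{2\to\infty}\le\sqrt{s}\,\|\cdot\|_{\max}$, and bounds each of the $O(s^2)$ entries by Bernstein (for $K_3$) or Hanson--Wright (for $K_4$) before taking a union bound. Your route is shorter and avoids the entrywise bookkeeping; the paper's route stays consistent with the $\|\cdot\|_{\max}$-plus-union-bound template used in Lemmas~\ref{lem:K1}--\ref{lem:K2}. Either way, the decisive point---which you correctly isolate---is that the orthogonality $(\Sigma^{1/2})_{JJ^c}^\top U_J=0$ is what prevents the ambient dimension $p$ from entering via $\|Y_{J^c}\|$.
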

\ \\ \ \\ \noindent
\textbf{Putting it together: } \\ \ \\
\noindent
We are now ready to complete  the proof.  We have that
\begin{align*}
    \|\tilde U_J - U_JW_* \|_{2\to\infty} &\leq \bigg\| \left(\tilde{U}_{J} \tilde{\Lambda}-U_{J} \Lambda U^{\top} \hat{U}\right) \tilde{\Lambda}^{-1} \bigg\|_{2\to\infty}+\|T_{1}\|_{2\to\infty}+\|T_{2}\|_{2\to\infty} \\
   &\leq \bigg\| \left(\tilde{U}_{J} \tilde{\Lambda}-U_{J} \Lambda U^{\top} \hat{U}\right) \tilde{\Lambda}^{-1}\bigg\|_{2\to\infty} + \mathcal{E}_1 + \mathcal{E}_2 \\
   &\leq \| T_3 \|_{2\to\infty} + \| T_4 \|_{2\to\infty} + \mathcal{E}_1 + \mathcal{E}_2 \\
   &\leq \| T_4 \|_{2\to\infty} + \mathcal{E}_1 + \mathcal{E}_2 + \mathcal{E}_3\\
   &\lesssim \frac{\|J_1\|_{2\to\infty} + \|J_2\|_{2\to\infty}}{\lambda_k} + \mathcal{E}_1 + \mathcal{E}_2 + \mathcal{E}_3 \\
   &\lesssim\frac{\|J_1\|_{2\to\infty}}{\lambda_k}  + \mathcal{E}_1 + \mathcal{E}_2 + \mathcal{E}_3 + \mathcal{E}_4 \\
   &\lesssim \frac{1}{\lambda_k}\bigg(\|K_1\|_{2\to\infty} + \|K_2 \|_{2\to\infty}+\| K_3 \|_{2\to\infty}  +\| K_4 \|_{2\to\infty}  \bigg)  + \mathcal{E}_1 + \mathcal{E}_2 + \mathcal{E}_3 + \mathcal{E}_4 \\
   &\leq \mathcal{E}_1 + \mathcal{E}_2 + \mathcal{E}_3 + \mathcal{E}_4 +\mathcal{E}_5
\end{align*}
with probability at least $1 - O(p^{-3})$.  Consequently, by the union bound and Assumption \ref{assumption:sparsistency}, this bound holds with probability at least $1 - \delta - p^{-2}$ as desired.

\subsection{Proof of Theorem \ref{cor}} \label{sec:corproof}
In this section we show how Theorem \ref{cor} can be deduced from Theorem \ref{thm1}.  We simply bound $\mathcal{E}_1$ through $\mathcal{E}_5$ using the additional assumptions introduced in Assumption \ref{assumption:clarity}.

Note that under Assumption \ref{assumption:clarity}, we have that $\lambda_{k+1} \leq \frac{\lambda}{2}$ and $\lambda_k \geq \lambda$, implying that $\lambda_k - \lambda_{k+1} \geq \frac{\lambda}{2}$. In addition $\lambda_1 \leq \kappa \lambda$.  Therefore,
\begin{align*}
    \frac{\lambda_1}{\lambda_k} &\leq \frac{\kappa\lambda}{\lambda} \leq \kappa; \\
    \frac{\lambda_1}{\lambda_k - \lambda_{k+1}} &\lesssim  \frac{\kappa\lambda}{\lambda} \leq \kappa.
\end{align*}
Therefore,
\begin{align}
    \mathcal{E}_1 &= \frac{ \kappa \lambda_1}{\lambda_k - \lambda_{k+1}} \frac{s \log(p)}{n} \|U\|_{2\to\infty}  + \kappa k \sqrt{\frac{\log(p)}{n}} \|U\|_{2\to\infty} \nonumber \\
    &\lesssim \kappa^2 \frac{s \log(p)}{n} \|U\|_{2\to\infty} +\kappa k \sqrt{\frac{\log(p)}{n}} \|U\|_{2\to\infty}  \nonumber\\
    &\lesssim \kappa^2 \frac{\sqrt{sk} \log(p)}{n} + \kappa \frac{k^{3/2}}{\sqrt{s}} \sqrt{\frac{\log(p)}{n}}  \nonumber\\
    &\lesssim \kappa^2 \frac{s\log(p)}{n} + \kappa \sqrt{\frac{k\log(p)}{n}}, \label{e1}
    \end{align}
where the penultimate inequality comes from the fact that $\|U\|_{2\to\infty}\lesssim (k/s)^{1/2}$ and that $k\lesssim \sqrt{s}$. Similarly,
\begin{align}
     \mathcal{E}_2 :&=  \frac{  \lambda_1 ^2}{(\lambda_k - \lambda_{k+1})^2} \frac{s \log(p)}{n} \|U \|_{2\to\infty}  \nonumber\\
     &\lesssim \kappa^2  \frac{s \log(p)}{n} \|U \|_{2\to\infty}   \nonumber\\
     &\lesssim \kappa^2 \frac{\sqrt{sk} \log(p)}{n}  \nonumber\\
     &\lesssim \kappa^2 \frac{s\log(p)}{n}. \label{e2}
\end{align}
For $\mathcal{E}_3$,
  \begin{align}
      \mathcal{E}_3 &=    \sqrt{\frac{s \log(p)}{n}} \frac{\kappa \lambda_1^{1/2}}{\lambda_k - \lambda_{k+1}}  \min\bigg( \|\Sigma\|_{\max}^{1/2}, \sqrt{\lambda_1} \| U \|_{2\to\infty} \bigg) \nonumber\\
      &\lesssim  \sqrt{\frac{s \log(p)}{n}} \frac{\kappa \lambda_1^{1/2}}{\lambda_k - \lambda_{k+1}} \sqrt{\lambda_1} \| U \|_{2\to\infty} \nonumber \\
      &\lesssim \kappa ^2 \sqrt{\frac{s \log(p)}{n}}\| U \|_{2\to\infty}   \nonumber\\
      &\lesssim \kappa^2 \sqrt{\frac{k\log(p)}{n}} \label{e3}
  \end{align}
  since $\|U\|_{2\to\infty}\lesssim(k/s)^{1/2}$.  For $\mathcal{E}_4$, we have that since $\lambda_{k+1} < \lambda_k$, then
  \begin{align}
       \mathcal{E}_4 &= 
  \kappa^2 \frac{\lambda_{k+1}}{\lambda_k} \sqrt{\frac{k \log (p)}{n}} +  \frac{\lambda_{k+1}}{\lambda_k}\kappa^3 \frac{s\log(p)}{n}   \nonumber\\
  &\lesssim \kappa^2 \sqrt{\frac{k \log (p)}{n}}  + \kappa^3 \frac{s\log(p)}{n}. \label{e4}
  \end{align}
  Finally, for $\mathcal{E}_5$, we see that
  \begin{align}
   \mathcal{E}_5 :&= \frac{\kappa \lambda_1}{\lambda_k - \lambda_{k+1}} \frac{ s \log(p)}{n} + \kappa  \sqrt{\frac{k\log(p)}{n}}. \nonumber \\
   &\lesssim \kappa^2 \frac{ s \log(p)}{n} + \kappa \sqrt{\frac{k\log(p)}{n}}. \label{e5}
\end{align}
The condition number is always larger than 1.  Hence, combining \eqref{e1},\eqref{e2},\eqref{e3},\eqref{e4} and \eqref{e5} completes the proof.

\section{Proofs of Intermediate Lemmas} \label{proofs:int}
In this section we collect the proofs of the Lemmas needed en route to the proof of Theorem \ref{thm1}. All the lemmas are self-contained and repeated for convenience.  

\subsection{Proofs of Lemmas \ref{lem:spectral_bound} and \ref{lem:eigengaps}}\label{sec:proofs_prelims}
First, we recall the statement of Lemma \ref{lem:spectral_bound}.

\spectralbound*

\begin{proof}[Proof of Lemma \ref{lem:spectral_bound}]
The result is similar to \citet{amini_high-dimensional_2009}, but for general subgaussian ensembles as opposed to Gaussian ensembles.   The proof is standard via an $\eps$-net; we follow similarly to the proof of Theorem 6.5 in \citet{wainwright_high-dimensional_2019}.  

Let $\Delta = \hat \Sigma_{JJ} - \Sigma_{JJ}$.  First take an $1/8$-net of the $S^{s - 1}$ sphere; denote these vectors $v_1, ..., v_N$, where $N \leq 17^s$ (see Example 5.8 in \citet{wainwright_high-dimensional_2019}).  Then for any  $s$-unit vector $v$, there exists some vector $v_j$ of distance at most $\eps = \frac{1}{8}$ to $v$.  Therefore 
\begin{align*}
    \langle v, \Delta v \rangle = \langle v_j, \Delta v_j \rangle + 2 \langle (v-v_j),\Delta v_j\rangle + \langle v - v_j , \Delta(v- v_j)\rangle.
\end{align*}
Hence, we see that by the triangle inequality and Cauchy-Schwarz,
\begin{align*}
    |\langle v , \Delta v\rangle | &\leq | \langle v_j ,\Delta v_j \rangle | + 2 \|\Delta\| \|v - v_j\| \| v_j\| + \|\Delta\| \|v - v_j \|^2\\
     &\leq | \langle v_j ,\Delta v_j \rangle | + \frac{1}{2}\| \Delta \|,
\end{align*}
where the final inequality comes from the fact that $v_j$ is at most distance $\frac{1}{8}$ to $v$.  Letting $v$ denote the unit vector achieving $\sup \langle v, Qv \rangle$ and rearranging  we have that
\begin{align*}
    \| \Delta \| \leq 2 | \langle v_j, \Delta v_j \rangle| \leq  2 \max_{1\leq i \leq n} | \langle v_i , \Delta v_i \rangle |.
\end{align*}
So we therefore have that
\begin{align*}
    \E( \exp(\lambda \| \Delta \|) ) \leq \E\bigg( \exp(2\lambda \max_{1\leq i\leq N}  | \langle v_i , \Delta v_i \rangle |) \bigg) \leq \sum_{i=1}^N\bigg( \E( \exp(2\lambda \langle v_i, \Delta v_i\rangle)) + \E (\exp(-2\lambda\langle v_i, \Delta v_i \rangle )) \bigg).
\end{align*}
We now bound the mgf above, which is the primary technical difference between this and Theorem 6.5 in \citet{wainwright_high-dimensional_2019}.  Denote $X_i[J]$ as the vector $X_i$ with only the components in $J$, and let $u$ be an arbitrary unit vector.  From the assumption the $X_i$'s are iid we have that
\begin{align*}
    \E \exp (t u\t \Delta u) &= \prod_{i=1}^{n} \E_{X_i}\bigg[ \exp\bigg( \frac{t}{n} [ (X_i[J]\t u)^2 - u\t \Sigma_{JJ} u ]\bigg) \bigg] \\
    &= \bigg( \E_{X_1}\bigg[ \exp\bigg( \frac{t}{n} [ (X_1[J]\t u)^2 - u\t \Sigma_{JJ} u ]\bigg) \bigg]\bigg)^n.
\end{align*}
Let $\eps$ be a Rademacher random variable independent of $X_1$.  Then by the contraction property of Rademacher random variables, 
\begin{align*}
    \E_{X_1}\bigg[ \exp\bigg( \frac{t}{n} [ (X_1[J]\t u)^2 - u\t \Sigma_{JJ} u ]\bigg) \bigg] &\leq \E_{X_1,\eps}\bigg[ \exp\bigg( \frac{2t}{n} \eps ((X_1[J])\t u)^2\bigg) \bigg] \\
    &= \sum_{k=0}^{\infty} \frac{1}{k!} \bigg(\frac{2t}{n}\bigg)^k \E ( \eps^k (X_1[J]\t u)^{2k} ) \\
    &= 1 + \sum_{k=1}^{\infty} \frac{1}{(2k)!} \bigg(\frac{2t}{n}\bigg)^{2k} \E ( (X_1[J]\t u)^{4k} )
\end{align*}
where the first is by the series expansion for the exponential, and the second is by noting that the $\eps$ are rademacher and hence have vanishing odd moments.  

 Note that by assumption the $X_i$'s can be written as $X_i = \Sigma^{1/2} Y_i$ for some independent $Y_i$'s satisfying $\| Y_{ij}\|_{\psi_2} \leq 1$.  Then $\| \Sigma^{1/2} Y_i \|_{\psi_2} \leq \sqrt{\lambda_1} $.  Hence, by equivalence of the subgaussian norm, the moments satisfy
\begin{align*}
     \E ( (X_1[J]\t u)^{4k} ) &\leq \frac{(4k)!}{2^{2k} (2k)!} (\sqrt{8} e \lambda_1^{1/2})^{4k}.
\end{align*}
From this, we deduce
\begin{align*}
    1 + \sum_{k=1}^{\infty} \frac{1}{(2k)!} \big(\frac{2t}{n}\big)^{2k} \E ( (X_1[J]\t u)^{4k} ) &\leq 1 + \sum_{k=1}^{\infty} \frac{1}{(2k)!} \bigg(\frac{2t}{n}\bigg)^{2k} \frac{(4k)!}{2^{2k}(2k)!} (\sqrt{8} e \lambda_1^{1/2})^{4k} \\
    &\leq 1 + \sum_{k=1}^{\infty} \bigg( \frac{16 t}{n} e^2 \lambda_1 \bigg)^
    {2k}
\end{align*}
which is a geometric series.  Hence, since $\frac{1}{1-a} \leq e^{2a}$ for all $a \in [0,1/2]$, we have that
\begin{align*}
     1 + \sum_{k=1}^{\infty} \bigg( \frac{16 t}{n} e^2 \lambda_1 \bigg)^
    {2l} &\leq \exp\bigg( 2 \big[\frac{16 t}{n} e^2 \lambda_1\big]^2\bigg)
\end{align*}
for all $|t| < \frac{n}{32e^2 \lambda_1}$.  Therefore, we have shown
\begin{align*}
      \E \exp (t u\t \Delta u) &\leq \exp\bigg( 512 \frac{t^2}{n} e^4 \lambda_1^2\bigg).
\end{align*}
From here, using the sum, we have that for all $|t| < \frac{n}{64e^2 \lambda_1}$ that
\begin{align*}
      \E( \exp(t\| \Delta \|) ) &\leq  \sum_{i=1}^N\bigg( \E( \exp(2t \langle v_i, \Delta v_i\rangle)) + \E (\exp(-2t\langle v_i, \Delta v_i \rangle )) \bigg) \\
      &\leq 2N e^{2048 \frac{t^2}{n}e^4 \lambda_1^2} \\
      &\leq \exp( C \frac{ t^2 \lambda_1^2}{n} + 4s),
\end{align*}
since $2(17^s) \leq e^{4s}$.  Therefore, by the Chernoff bound,
\begin{align*}
    \p\bigg( \|\Delta \| > \eta \bigg) &\leq \exp\bigg( C \frac{ t^2 \lambda_1^2}{n} + 4s - \eta t\bigg).
\end{align*}
Minimizing over $t$ shows that
\begin{align*}
    t &= \frac{n \eta}{2 C \lambda_1^2}
\end{align*}
is the minimizer provided that $\eta < \frac{C \lambda_1}{32e^2}.$  Plugging this value of $t$ in yields
\begin{align*}
    \p\bigg( \| \Delta \| > \eta \bigg) &\leq \exp\bigg( 4s - \frac{\eta^2 n}{4 C \lambda_1^2}\bigg) \\
    &= \exp\bigg[ n \bigg( \frac{4s}{n} - \frac{\eta^2}{4 C \lambda_1^2} \bigg) \bigg].
\end{align*}
Suppose $\eta = C_2 \lambda_1 \bigg( \sqrt{\frac{s}{n}} + \sqrt{\frac{4\log(p)}{n}} \bigg)$ for some sufficiently large constant $C_2$.  Note that Assumption \ref{assumption:dimensions} ensures that this choice of $\eta$ satisfies  $\eta < \frac{C \lambda_1}{32e^2}$ since $s/n = o(1)$ and $\log(p)/n = o(1)$.  Therefore, with this choice of $\eta$, we have that
\begin{align*}
    \exp\bigg[ n \bigg( \frac{4s}{n} - \frac{\eta^2}{4 C \lambda_1^2} \bigg) \bigg] &\leq \exp( -4\log(p)) \\
    &\leq p^{-4}.  
\end{align*}
Consequently, recalling that $\Delta = \hat \Sigma_{JJ} - \Sigma_{JJ}$ we have that
\begin{align*}
    \p\bigg[ \| \hat \Sigma_{JJ} - \Sigma_{JJ} \| >  C_2 \lambda_1 \bigg( \sqrt{\frac{s}{n}} + \sqrt{\frac{4\log(p)}{n}} \bigg) \bigg] \leq p^{-4}
\end{align*}
as desired. 
\end{proof}

Again, we recall the statement of Lemma \ref{lem:eigengaps}.

\eigengaps*

\begin{proof}[Proof of Lemma \ref{lem:eigengaps}]

By Weyl's inequality, the event in Lemma \ref{lem:spectral_bound} implies that for all $1\leq i\leq s$ that
\begin{align*}
    | \lambda_i - \tilde \lambda_i | \leq C\lambda_1 \bigg( \sqrt{ \frac{s}{n} } + \sqrt{\frac{\log(p)}{n}} \bigg).
\end{align*}
Note that $\Sigma_{JJ}$ is a principal submatrix of $\Sigma$; hence its eigenvalues satisfy $\lambda_i(\Sigma_{JJ}) \leq \lambda_i$ for all  $i \geq k+1$ (when $1\leq i \leq k$ we have equality).  Therefore,
By Assumption \ref{assumption:eigenvalues}, we have that 
\begin{align*}
    \lambda_k - \tilde \lambda_{k+1} &\geq \lambda_k - \lambda_{k+1}(\Sigma_{JJ}) - C\lambda_1 \bigg( \sqrt{ \frac{s}{n} } + \sqrt{\frac{\log(p)}{n}} \bigg)\\
    &\geq \lambda_k - \lambda_{k+1} - C\lambda_1 \bigg( \sqrt{ \frac{s}{n} } + \sqrt{\frac{\log(p)}{n}} \bigg)\\
    &\geq \frac{7}{8} (\lambda_k-\lambda_{k+1}) \\
    &\geq \frac{\lambda_k -\lambda_{k+1}}{8},
\end{align*}
and similarly for $\tilde \lambda_k - \lambda_{k+1}$.  For the final bound,
\begin{align*}
    \tilde \lambda_k &\geq \lambda_k - C \lambda_1 \bigg( \sqrt{ \frac{s}{n} } + \sqrt{\frac{\log(p)}{n}} \bigg) \\
    &\geq \lambda_k - \lambda_k/8\\
    &\geq \frac{\lambda_k}{4},
\end{align*}
which completes the proof.
\end{proof}

\subsection{Proof of Lemmas \ref{lem:T1} and \ref{lem:T2}}
\label{sec:proofst1t2}
First we will recall the statement of Lemma \ref{lem:T1}.
\tone*

\begin{proof}[Proof of Lemma \ref{lem:T1}]
Note that by properties of the $2\to\infty$ norm, we have
\begin{align}
    \|   U_J (\Lambda U_J\t \tilde U -  U_J\t \tilde U_J\tilde \Lambda )\tilde \Lambda\inv \|_{2\to\infty} &\leq \| U_J \|_{2\to\infty} \| \Lambda U_J\t \tilde U -  U_J\t \tilde U_J\tilde \Lambda \| \hat\lambda_k\inv. \label{T11}
\end{align}
We note that $\lambda_k \lesssim \tilde \lambda_k$ with probability $1 - O(p^{-4})$ by Lemma \ref{lem:eigengaps}. Furthermore, by the eigenvector equation,
\begin{align*}
    \Lambda U_J\t \tilde U_J -  U_J\t \tilde U_J\tilde \Lambda &=  (U_J \Lambda)\t \tilde U_J - U_J\t \tilde U_J \tilde \Lambda \\
    &= (\Sigma_{JJ} U
    _J)\t \tilde U - U_J\t \hat \Sigma_{JJ} \tilde U_J \\
    &= U_J\t (\Sigma_{JJ} - \hat \Sigma_{JJ} )\tilde U_J.
\end{align*}
In addition,
\begin{align*}
    U_J\t (\Sigma_{JJ} - \hat \Sigma_{JJ} )\tilde U_J &= U_J\t (\Sigma_{JJ} - \hat \Sigma_{JJ} ) U_J U_J\t \tilde U_J + U_J\t (\Sigma_{JJ} - \hat \Sigma_{JJ} ) (I -U_J U_J\t) \tilde U_J.
\end{align*}
The second term satisfies
\begin{align*}
   \|  U_J\t (\Sigma_{JJ} - \hat \Sigma_{JJ} ) (I -U_J U_J\t) \tilde U_J \| \leq  \|  U_J\t (\Sigma_{JJ} - \hat \Sigma_{JJ} ) \| \| (I -U_J U_J\t) \tilde U_J \|.
\end{align*}
Note that 
\begin{align*}
    \| (\Sigma_{JJ} - \hat \Sigma_{JJ} )U_J\| &\leq \| \Sigma_{JJ} - \hat \Sigma_{JJ}  \| \|U_J \| \\
    &\leq \| \Sigma_{JJ} - \hat \Sigma_{JJ}  \|
\end{align*}
since $U_J$ has orthonormal columns.  Therefore, by Lemma \ref{lem:spectral_bound}, 
\begin{align}
     \|  U_J\t (\Sigma_{JJ} - \hat \Sigma_{JJ} ) \| 
     &\lesssim \lambda_1  \bigg( \sqrt{\frac{s\log(p)}{n}}\bigg). \label{T13}
\end{align}
Note that $\| (I - U_J U_J\t) \tilde U_J \| \lesssim \|\sin\Theta(U_J, \tilde U_J) \|\lesssim \| U_J U_J\t - \tilde U_J \tilde U_J\t \|$ (see Lemma \ref{lem:sintheta} in Appendix \ref{sec:bernstein}).
Therefore, by Proposition \ref{lem:dk}, we have that
\begin{align}
 \|(I -U_J U_J\t) \tilde U_J \| \lesssim   \frac{  \lambda_1 }{\lambda_k-\lambda_{k+1}} \bigg( \sqrt{\frac{s\log(p)}{n}}\bigg). \label{T12}
\end{align}
In summary, we have shown so far that by \eqref{T11},  \eqref{T13}, and \eqref{T12},
\begin{align*}
      \|   U_J (\Lambda U_J\t \tilde U_J -  U_J\t \tilde U_J\tilde \Lambda )\tilde \Lambda\inv \|_{2\to\infty} &\lesssim \frac{\|U\|_{2\to\infty} \lambda_1^2}{\lambda_k(\lambda_k - \lambda_{k+1})} \frac{s \log(p)}{n}  \\
      &\qquad + \frac{\|U\|_{2\to\infty}}{\lambda_k}  \| U_J\t (\Sigma_{JJ} - \hat \Sigma_{JJ}) U_J U_J\t \tilde U_J\|.
\end{align*}
Therefore, we focus on bounding the term
\begin{align*}
     \| U_J\t (\Sigma_{JJ} - \hat \Sigma_{JJ}) U_J U_J\t \tilde U_J\|.
\end{align*}
Naively, $\|U_J\t \tilde U_J\| \leq 1$ so by submultiplicativity, we have that
\begin{align*}
     \| U_J\t (\Sigma_{JJ} - \hat \Sigma_{JJ}) U_J U_J\t \tilde U_J\| &\leq  \| U_J\t (\Sigma_{JJ} - \hat \Sigma_{JJ}) U_J\|.
\end{align*}
For any indices $i$ and $k$, the entry of the above matrix can be written as
\begin{align*}
    \frac{1}{n} \sum_{l=1}^{n} \langle (U_J)_{\cdot i}, (X_l X_l\t - \E(X_l X_l\t))(U_J)_{\cdot k} \rangle &=  \frac{1}{n} \sum_{l=1}^{n} \bigg( ((U_J)_{\cdot i}\t X_l)(X_l\t (U_J)_{\cdot k}) - (U_J)_{\cdot i}\t \Sigma (U_J)_{\cdot k} \bigg).
\end{align*}
This is a sum of independent, mean-zero subexponential random variables.  Therefore, to apply the generalized Bernstein inequality (see Theorem \ref{thm:genbernstein} in Appendix \ref{sec:bernstein}), we need to find the $\psi_1$ norm of the above random variable.  By properties of the $\psi_1$ norm, we have that
\begin{align*}
    \| ((U_J)_{\cdot i}\t X_l)(X_l\t (U_J)_{\cdot k}) -(U_J)_{\cdot j}\t \Sigma (U_J)_{\cdot i} \|_{\psi_1} &\leq C \| ((U_J)_{\cdot i}\t X_l)(X_l\t (U_J)_{\cdot k}) \|_{\psi_1} \\
    &\leq C \| (U_J)_{\cdot i}\t X_l \|_{\psi_2} \| X_l\t (U_J)_{\cdot k} \|_{\psi_2} \\
    &= C \| (U_J)_{\cdot i}\t \Sigma^{1/2} Y_l \|_{\psi_2} \| Y_l\t \Sigma^{1/2} (U_J)_{\cdot k} \|_{\psi_2} \\
    &= C \sqrt{\lambda_i \lambda_k } \| (U_J)_{\cdot i}\t Y_l \|_{\psi_2} \|(U_J)_{\cdot k}\t Y_l \|_{\psi_2}\\
    &\leq C \sqrt{\lambda_j \lambda_k}\\
    &\leq C \lambda_1
\end{align*}
since $X_l = \Sigma^{1/2} Y_l$, the $(U_J)_{\cdot i}$ are the eigenvectors of $\Sigma$ and the vectors $Y$ are assumed to be subgaussian with unit $\psi_2$ norm.  Therefore, 
by the generalized Bernstein inequality (Theorem \ref{thm:genbernstein}), we have that for fixed $i,k$, that
\begin{align*}
    \p \bigg(  |\frac{1}{n} \sum_{l=1}^{n} \langle (U_J)_{\cdot i}, (X_l X_l\t - \E(X_l X_l\t))(U_J)_{\cdot k} |\rangle \geq t \bigg) &\leq 2\exp\bigg[-c_0 n \min\bigg( \frac{t^2}{( \lambda_1 )^2}, \frac{t}{ \lambda_1 } \bigg) \bigg].
\end{align*}
Since $\log(k) \ll \log(p)$, taking $t = C \lambda_1\sqrt{\frac{2\log(k) + 4\log(p)}{n}}$ for some constant $C$ yields that 
\begin{align*}
    |(U_J\t (\Sigma_{JJ} - \hat \Sigma_{JJ}) U_J )_{ik}| &\leq C \lambda_1\sqrt{\frac{2\log(k) + 4\log(p)}{n}} \\
    &\lesssim \lambda_1 \sqrt{ \frac{\log(p)}{n}}
\end{align*}
with probability at least $1 - O(p^{-4}k^{-2})$. Therefore, 
\begin{align*}
    \| U_J\t (\Sigma_{JJ} - \hat \Sigma_{JJ}) U_J\| &\leq  \| U_J\t (\Sigma_{JJ} - \hat \Sigma_{JJ}) U_J \|_F \\
    &\leq k \| U_J\t (\Sigma_{JJ} - \hat \Sigma_{JJ}) U_J\|_{\max} \\
    &\leq C k\lambda_1\sqrt{\frac{2\log(k) + 4\log(p)}{n}} \\
    &\lesssim k\lambda_1 \sqrt{\frac{\log(p)}{n}}
\end{align*}
with probability at least $1 - O(p^{-4})$ by taking a union bound over all $k^2$ entries.  Therefore, putting it all together, we see that
\begin{align*}
     \|   U_J (\Lambda U_J\t \tilde U -  U_J\t \tilde U_J\tilde \Lambda )\tilde \Lambda\inv \|_{2\to\infty} &\lesssim \frac{\|U\|_{2\to\infty} \lambda_1^2}{\lambda_k(\lambda_k - \lambda_{k+1})} \frac{s \log(p)}{n}  \\
      &\qquad  + \frac{k\lambda_1 \|U\|_{2\to\infty}}{\lambda_k}  \sqrt{\frac{\log(p)}{n}}
\end{align*}
with probability at least $1 - O(p^{-4})$ as desired.
\end{proof}

Now we prove Lemma \ref{lem:T2}. 
\ttwo*

\begin{proof}[Proof of Lemma \ref{lem:T2}]
This proof follows similarly to ideas in \citet{cape_signal-plus-noise_2019,abbe_entrywise_2020,lei_unified_2019}.

By properties of the $2\to\infty$ norm, we have
\begin{align*}
     \|U_J( W_* - U_J\t \tilde U_J)\|_{2\to\infty} &\leq \| U_J \|_{2\to\infty}\| W_* - U_J\t \tilde U_J\|.
\end{align*}
We will now analyze the term inside the spectral norm. Note that $W_*$ is the Frobenius-optimal Procrustes transformation.  Let $V_1 \Sigma V_2\t$ be the SVD of $U_J\t \tilde U_J$.  Then $\Sigma$ contains the sines of the canonical angles between $U_J$ and $\tilde U_J$ (see \citet{bhatia_matrix_1997} or \citet{g._w._stewart_matrix_1990} for details; Lemma \ref{lem:sintheta} in Appendix \ref{sec:bernstein} also contains equivalent expressions for the $\sin\Theta$ distances).  Then, letting $\theta_j$ be the canonical angles and $\sigma_j= \cos(\theta_j)$,
\begin{align*}
    \| W_* - U_J\t \tilde U_J \| &= \| V_1 V_2\t - V_1 \Sigma V_2\t \| \\
    &= \| I - \Sigma \| \\
    &= \max_{1\leq j\leq k} |(1 - \sigma_j)| \\
    &\leq \max_{1\leq j \leq k} (1 - \sigma_j^2)\\
    &= \max_j \sin^2(\theta_j) \\
    &= \| U_J U_J\t - \tilde U_J\tilde U_J\t\|^2 \\
    &\lesssim  \frac{ \lambda_1 ^2}{(\lambda_k - \lambda_{k+1})^2} \frac{s \log(p)}{n}.
\end{align*}
with probability at least $1 - O(p^{-4})$ by Proposition \ref{lem:dk}.
\end{proof}

\subsection{Proof of Lemma \ref{lem:J1}}
Recall the statement of Lemma \ref{lem:J1}.

\three*

\label{sec:j1proof}
\begin{proof}[Proof of Lemma \ref{lem:J1}]
 Note that since $U_J\t \Sigma_{JJ} = \Lambda U_J\t$, we have that
\begin{align*}
     \| U_J U_J\t \Sigma_{JJ} (\tilde U_J - U_J U_J\t \tilde U_J) \tilde \Lambda\inv \|_{2\to\infty} &\leq  \frac{\| U \|_{2\to\infty}}{\tilde \lambda_k} \|U_J\t \Sigma_{JJ} (\tilde U_J - U_J U_J\t \tilde U_J) \| \\
     &\leq \frac{\| U \|_{2\to\infty}}{\tilde \lambda_k} \|\Lambda U_J\t (\tilde U_J \tilde U_J\t- U_J U_J\t ) \|.
\end{align*}
On the other hand,
\begin{align*}
     \| U_J U_J\t \Sigma_{JJ} (\tilde U_J - U_J U_J\t \tilde U_J) \tilde \Lambda\inv \|_{2\to\infty} &\leq \frac{\| U \Lambda^{1/2}\|_{2\to\infty}}{\tilde \lambda_k} \| \Lambda^{1/2} U_J\t (\tilde U_J - U_J U_J\t \tilde U_J) \| \\
     &\leq \frac{\sqrt{ \| \Sigma\|_{\max}}}{\tilde \lambda_k} \| \Lambda^{1/2} U_J\t (\tilde U_J  \tilde U_J\t - U_J U_J\t ) \| ,
\end{align*}
where the term $\| \Sigma\|_{\max}$ comes from the fact that
\begin{align*}
  | U_J U_J\t \Sigma_{i,j} | &= | \langle (U \Lambda^{1/2})_i , (U \Lambda^{1/2})_j \rangle |,
\end{align*}
and hence that
\begin{align*}
    \| U |\Lambda|^{1/2} \|_{2\to\infty} &= \max_i \sqrt{\langle (U \Lambda^{1/2})_i , (U \Lambda^{1/2})_i \rangle}  \\
    &\leq \max_i \sqrt{ | (U_J U_J\t \Sigma)_{ii}|} \\
    &\leq \max_{i,j} \sqrt{ | \Sigma_{ij} |},
\end{align*}
since the eigenvalues of $\Sigma$ are all positive.  Therefore,
\begin{align}
      \| U_J U_J\t \Sigma_{JJ} (\tilde U_J - U_J U_J\t \tilde U_J) \tilde \Lambda\inv \|_{2\to\infty} &\leq \frac{1}{\tilde \lambda_k} \min\bigg( \sqrt{\lambda_1}\| U\|_{2\to\infty}\|\Lambda^{1/2} U_J\t (\tilde U_J \tilde U_J\t- U_J U_J\t ) \|, \nonumber\\
      &\qquad \qquad \qquad \qquad\| \Sigma \|_{\max}^{1/2}\|\Lambda^{1/2} U_J\t (\tilde U_J \tilde U_J\t- U_J U_J\t ) \|\bigg) \label{main1}
\end{align}
Therefore, what remains is to analyze
\begin{align*}
    \| \Lambda^{1/2} U_J\t (\tilde U_J\tilde U_J\t - U_J U_J\t) \|.
\end{align*}

To find this bound, we will represent the difference $\tilde U_J\tilde U_J\t - U_J U_J\t$ using the holomorphic functional calculus as done in \citet{lei_unified_2019} for the spiked Wigner matrix setting.  This technique has been used extensively in studying eigenvector perturbation; e.g. \citet{mao_estimating_2020,lei_unified_2019,koltchinskii_perturbation_2016,xia_normal_2021,wahl_note_2019,wahl_perturbation_2019}.  More specifically, let $\mathcal{C}$ denote a contour on the complex plane with real part ranging from $\lambda_{k} - \eta$ to $\lambda_1 + \eta$, and with imaginary part ranging from $-\gamma$ to $\gamma$.  Then, for a proper choice of $\eta$, the top $k$ eigenvalues of both $\Sigma_{JJ}$ and $\hat \Sigma_{JJ}$ lie in $\mathcal{C}$, and one can write the difference of the spectral projections via a complex integral
\begin{align*}
    \tilde U_J\tilde U_J\t - U_J U_J\t &= - \bigg[ \frac{1}{2\pi i} \oint_{\mathcal{C}} (\hat \Sigma_{JJ} - z I)\inv dz - \frac{1}{2\pi i} \oint_{\mathcal{C}} (\Sigma_{JJ}- z I)\inv dz \bigg]
\end{align*}
by the residue theorem (e.g. \citep{greene_function_2006}).  Using the identity $A\inv - B\inv = B\inv( A- B)A\inv$, and assuming the real number $\eta$ is chosen appropriately so that the contours are the same, the integrals can be combined to arrive at the expression
\begin{align*}
     \tilde U_J\tilde U_J\t - U_J U_J\t  &= -\frac{1}{2\pi i} \oint_{\mathcal{C}} (\Sigma_{JJ} - z I)\inv (\hat \Sigma_{JJ} - \Sigma_{JJ}) (\hat \Sigma_{JJ} - z I)\inv dz.
\end{align*}
Premultiplying by $\Lambda^{1/2} U_J\t$ yields (formally) that
\begin{align*}
     \| \Lambda^{1/2} U_J\t (\tilde U_J\tilde U_J\t - U_J U_J\t) \| &= \frac{1}{2\pi} \bigg\| \oint_{\mathcal{C}} \Lambda^{1/2} U_J\t  (\Sigma_{JJ} - z I)\inv (\hat \Sigma_{JJ} - \Sigma_{JJ}) (\hat \Sigma_{JJ} - z I)\inv dz \bigg\|.
\end{align*}
Note that the matrix is diagonalizable by the same eigenvectors as $\Sigma_{JJ}$, so that
\begin{align*}
    U_J\t (\Sigma_{JJ} - z I)\inv &= U_J\t ( U_J (\Lambda - z I)\inv U_J\t) + U_J\t (U_{\perp} (\Lambda_{\perp} - z I)\inv U_{\perp}\t \\
    &= (\Lambda - z I)\inv U_J\t
\end{align*}
by orthonormality, where $U_{\perp}$ are defined as the $s \times s$ completion of $U_J$ such that $[U_J, U_{\perp}]$ is an $s \times s$ orthogonal matrix.  Therefore, we have
\begin{align*}
     \| \Lambda^{1/2} U_J\t (\tilde U_J\tilde U_J\t - U_J U_J\t) \| &= \frac{1}{2\pi} \bigg\| \oint_{\mathcal{C}} \Lambda^{1/2}  ( \Lambda - z I)\inv U_J\t (\hat \Sigma_{JJ} - \Sigma_{JJ}) [\tilde U , \tilde U_{\perp}] (\hat \Lambda_{all} - z I)\inv dz \bigg\|,
\end{align*}
where $\hat \Lambda_{all}$ is the diagonal matrix of all the eigenvalues of $\hat \Sigma_{JJ}$.

The rest of the proof mirrors closely that of Lemma A.8 in \citet{lei_unified_2019}.   Recall that in order to do all these manipulations, we required that the parameter $\eta$ was chosen such that the contour $\mathcal{C}$ contains the top $k$ eigenvalues of $\hat \Sigma_{JJ}$ and $\Sigma_{JJ}$.  In fact, Lemma \ref{lem:eigengaps} shows that the choice 
\begin{align*}
    \eta := \frac{\lambda_{k} - \lambda_{k+1}}{4}
\end{align*}
suffices.  To see this, note that by Lemmas \ref{lem:spectral_bound} and \ref{lem:eigengaps}, 
\begin{align*}
    | \tilde \lambda_k - \lambda_k | &\leq \frac{\lambda_k - \lambda_{k+1}}{8}; \\
    |\tilde \lambda_{k+1} - \lambda_{k+1}| &\leq \frac{\lambda_k - \lambda_{k+1}}{8},
\end{align*}
so that the interval $\lambda_k \pm \eta$ contains $\tilde \lambda_k$, the interval $\lambda_k \pm \eta$ does not contain $\tilde \lambda_{k+1}$, and both $\tilde \lambda_k$ and $\tilde \lambda_{k+1}$ satisfy
\begin{align*}
    |\lambda_k - \tilde \lambda_{k} - \eta| \geq \eta/2 \\
    |\lambda_k - \tilde \lambda_{k+1} - \eta| \geq \eta/2.
\end{align*}

Therefore, the top $k$ eigenvalues of $\hat \Sigma_{JJ}$ lie within $\mathcal{C}$ with high probability and the bottom eigenvalues lie outside of it.  With this particular choice of $\eta$, we can proceed to bound the integrand along the contour $\mathcal{C}$.  We will conduct the rest of the analysis assuming that this event holds; it does with probability at least $1 - O(p^{-4})$.  

Define $a := \lambda_k - \eta$ and $b := \lambda_1 + \eta$.   We decompose the contour $\mathcal{C}$ into the following sets
\begin{align*}
    \mathcal{C}_1 := \{z = a + x i, x \in (-\gamma,\gamma)\} &\qquad \mathcal{C}_2 := \{ z  = x + \gamma i: x \in [a,b]\} \\
     \mathcal{C}_3 := \{z = b + x i, x \in (-\gamma,\gamma)\} &\qquad \mathcal{C}_4 := \{ z  = x - \gamma i: x \in [a,b]\}.
\end{align*}
  Let $\mathcal{I}(z)$ be the integrand.  Observe that
\begin{align*}
    \bigg\| \oint_{\mathcal{C}} \mathcal{I}(z)dz \bigg\| &\leq \oint_{\mathcal{C}_1} \bigg\| \mathcal{I}(z) dz \bigg\| + \oint_{\mathcal{C}_2} \bigg\| \mathcal{I}(z) dz \bigg\| + \oint_{\mathcal{C}_4} \bigg\| \mathcal{I}(z) dz \bigg\| + \oint_{\mathcal{C}_4} \bigg\| \mathcal{I}(z) dz \bigg\|.
\end{align*}
Therefore, we bound the above integrals directly.  The tricky analysis will be along $\mathcal{C}_1$ and $\mathcal{C}_3$; we will show that the integral along $\mathcal{C}_2$ and $\mathcal{C}_4$ tend to zero for large $\gamma$.  To this end, we will focus on $\mathcal{C}_1$ first. 
Note that
\begin{align}
 \oint_{\mathcal{C}_1} \bigg\|\Lambda^{1/2} & ( \Lambda - z I)\inv U_J\t (\hat \Sigma_{JJ} - \Sigma_{JJ}) [\tilde U , \tilde U_{\perp}] (\hat \Lambda_{all} - z I)\inv \bigg\| dz \label{c1} \\
 &\leq \oint_{\mathcal{C}_1} \bigg\|\Lambda^{1/2}  ( \Lambda - z I)\inv \bigg\| \bigg\| U_J\t (\hat \Sigma_{JJ} - \Sigma_{JJ}) [\tilde U , \tilde U_{\perp}] \bigg\| \bigg\| (\hat \Lambda_{all} - z I)\inv \bigg\| dz  \nonumber\\
 &\leq \bigg\| U_J\t (\hat \Sigma_{JJ} - \Sigma_{JJ}) [\tilde U , \tilde U_{\perp}] \bigg\|\int_{-\gamma}^{\gamma} \bigg\|\Lambda^{1/2}  ( \Lambda - (a+xi) I)\inv \bigg\|  \bigg\| (\hat \Lambda_{all} - (a+xi) I)\inv \bigg\| dx.\nonumber
\end{align}
 First, recall the definition of $a := \lambda_k - \eta$.  The term on the right-most side satisfies
\begin{align*}
     \bigg\| (\hat \Lambda_{all} - (a+xi) I)\inv \bigg\| &\leq \frac{1}{\sqrt{(\eta)^2/4 + x^2}}
\end{align*}
for all $x$ since $(\hat \lambda_i - a) \geq \eta/2$.  Therefore, we are left to bound the middle term, for which we must bound
\begin{align*}
    \max_{1\leq i \leq k} \frac{\lambda_i^{1/2}}{\sqrt{(\lambda_i - a)^2 + x^2}}.
\end{align*}
Define the function 
\begin{align*}
    g(u;x,a) := \frac{u}{\sqrt{(u - a)^2 + x^2}}.
\end{align*}
Then
\begin{align*}
     \max_{1\leq i \leq k} \frac{\lambda_i^{1/2}}{\sqrt{(\lambda_i - a)^2 + x^2}} &\leq \sup_{u \geq a + \eta} \bigg(g(u;x,a)\bigg)^{1/2}\frac{1}{(\eta^2 + x^2)^{1/4}}.  
\end{align*}
The details of the function $g$ are carried out in \citet{lei_unified_2019}; the analysis therein implies
\begin{align*}
    \sup_{u\geq a + \eta}g(u;x,a) &\leq \frac{a + \eta}{\sqrt{\eta^2 + x^2}} \mathbb{I}_{|x| \leq \sqrt{a\eta}} + \sqrt{\frac{a + \eta}{\eta}}\mathbb{I}_{|x| > \sqrt{a\eta}}.
\end{align*}
Therefore the integral from \eqref{c1} satisfies
\begin{align*}
\int_{-\gamma}^{\gamma} &\| \Lambda^{1/2} ( \Lambda - (a + x i)I)\inv \| \| (\hat \Lambda_{all} - (a + x i)I)\inv \| dx \\ &\leq  \int_{-\gamma}^{\gamma} \frac{1}{\sqrt{\eta^2/4 + x^2}} \frac{1}{(\eta^2 + x^2)^{1/4}}  \bigg( \frac{a + \eta}{\sqrt{\eta^2 + x^2}} \mathbb{I}_{|x| \leq \sqrt{a\eta}} + \sqrt{\frac{a + \eta}{\eta}}\mathbb{I}_{|x| > \sqrt{a\eta}}\bigg)^{1/2}dx \\
    &\leq   \int_{|x| \leq \sqrt{a\eta}}  \frac{4}{(\eta^2 + x^2)^{3/4}} \bigg( \frac{a + \eta}{\sqrt{\eta^2 + x^2}} \bigg)^{1/2} dx + \int_{|x| > \sqrt{a\eta}} \frac{4}{(\eta^2 + x^2)^{3/4}} \bigg( \sqrt{\frac{a + \eta}{\eta}} \bigg)^{1/2} dx \\
    &\leq  4\sqrt{a + \eta} \int_{|x| \leq \sqrt{a\eta}} \frac{1}{\eta^2 + x^2} dx + 4 \bigg(\frac{a+\eta}{\eta}\bigg)^{1/4} \int_{|x| > \sqrt{a\eta}} \frac{1}{(\eta^2 + x^2)^{3/4}} dx \\
    &\leq  8\sqrt{a + \eta} \int_{0}^{\sqrt{a\eta}} \frac{1}{\eta^2 + x^2} dx + 8 \bigg(\frac{a+\eta}{\eta}\bigg)^{1/4} \int_{\sqrt{a\eta}}^{\infty} \frac{1}{(\eta^2 + x^2)^{3/4}} dx \\
    &\leq  8\frac{\sqrt{a + \eta}}{\eta} \int_{0}^{\sqrt{a/\eta}} \frac{1}{1 + u^2} du + 8 \bigg(\frac{a+\eta}{\eta}\bigg)^{1/4} \frac{1}{\eta^{1/2}} \int_{\sqrt{a/\eta}}^{\infty} \frac{1}{(1 + u^2)^{3/4}} du \\
    &\leq  8\frac{\sqrt{a + \eta}}{\eta} 2\pi  + 8 \bigg(\frac{a+\eta}{\eta}\bigg)^{1/4} \frac{1}{\eta^{1/2}} \int_{\sqrt{a/\eta}}^{\infty} \frac{1}{ u^{3/2}} du \\
    &\leq  16\pi \frac{\sqrt{a + \eta}}{\eta}   + 8 \bigg(\frac{a+\eta}{\eta}\bigg)^{1/4} \frac{1}{\eta^{1/2}} \frac{2}{(a/\eta)^{1/2}} \\
    &\lesssim  \frac{\sqrt{a + \eta}}{\eta} +\bigg( \frac{a+\eta}{a} \bigg)^{1/4} \frac{1}{a^{1/2}}.
\end{align*}
Recall that $a + \eta = \lambda_k $; $\eta = (\lambda_{k} - \lambda_{k+1})/4$.  With these, the bound becomes (up to constants)
\begin{align*}
 \oint_{\mathcal{C}_1} \bigg\|\Lambda^{1/2} (\Lambda - zI)\inv U_J\t &(\hat \Sigma_{JJ} - \Sigma_{JJ} (\tilde U, \tilde U_{\perp})(\hat \Lambda_{all} - zI)\inv\bigg\|  dz  \\
&\lesssim  \frac{\sqrt{\lambda_1}}{\lambda_k - \lambda_{k+1}} \| (\hat \Sigma_{JJ} - \Sigma_{JJ})U_J \|+ \kappa^{1/4} \frac{1}{\lambda_k^{1/2}}\| (\hat \Sigma_{JJ} - \Sigma_{JJ})U_J \| \\
  &\lesssim \frac{ \sqrt{\lambda_1}}{\lambda_k-\lambda_{k+1}}\| (\hat \Sigma_{JJ} - \Sigma_{JJ})U_J \|.
\end{align*}
The exact same argument goes through for contour $\mathcal{C}_3$ as well. We will see that the contours along the imaginary axis tend to zero as $\gamma \to \infty$.  Assuming this for the moment, by Equation \eqref{main1}, we see that the final bound is of the form
\begin{align*}
\frac{1}{\lambda_k} \| \Lambda^{1/2} U_J\t(\tilde U_J\tilde U_J\t - U_J U_J\t) &\| \min\bigg( \| \Sigma\|_{\max}^{1/2}, \sqrt{\lambda_1}\|U\|_{2\to\infty}\bigg) \\
&\lesssim  \frac{\| (\hat \Sigma_{JJ} - \Sigma_{JJ})U_J \|}{\lambda_k} \bigg(  \frac{\sqrt{\lambda_1}}{\lambda_{k} - \lambda_{k+1}} \bigg) \min\bigg( \|\Sigma\|_{\max}^{1/2}, \sqrt{\lambda_1} \| U \|_{2\to\infty} \bigg)  
\end{align*}  
By Lemma \ref{lem:spectral_bound}, we have that the term $\| (\hat \Sigma_{JJ} - \Sigma_{JJ})U_J \|$ can be bounded via
\begin{align*}
    \lambda_1 \sqrt{\frac{s \log(p)}{n}}
\end{align*}
with probability at least $1 - O(p^{-4})$.  Therefore, the bound becomes
\begin{align*}
    \sqrt{\frac{s \log(p)}{n}} \frac{\lambda_1^{3/2}}{\lambda_k (\lambda_k - \lambda_{k+1})}  \min\bigg( \|\Sigma\|_{\max}^{1/2}, \sqrt{\lambda_1} \| U \|_{2\to\infty} \bigg),
\end{align*}
which is the desired bound.

It remains to show that the  integrals tend to zero along the curves $\mathcal{C}_2$ and $\mathcal{C}_4$. Let $\mathcal{I}(z)$ denote the integrand. Then for sufficiently large $\gamma$,
\begin{align*}
     \oint_{\mathcal{C}_2}  \| \mathcal{I}(z) \| dz  &=  \int_{a}^{b}\bigg\| \Lambda^{1/2} (\Lambda- (x+\gamma i) I)\inv U_J\t (\hat \Sigma_{JJ} - \Sigma_{JJ})[ \tilde U_J \tilde U_{\perp}](\hat \Lambda_{all} - (x+\gamma i)I)\inv \bigg\|dx  \\
     &\leq (b - a) \sup_{x \in [a,b]} \bigg\| \Lambda^{1/2} (\Lambda- (x+\gamma i) I)\inv U_J\t (\hat \Sigma_{JJ} - \Sigma_{JJ})[ \tilde U_J \tilde U_{\perp}](\hat \Lambda_{all} - (x+\gamma i)I)\inv \bigg \| \\
     &= O(\gamma^{-2}),
\end{align*}
which tends to zero as $\gamma \to\infty$.  \end{proof}

\subsection{Proof of Lemma \ref{lem:perp_bound}} \label{sec:perpboundproof}
First, recall the statement of Lemma \ref{lem:perp_bound}.
\uperp*

Recall the definition of $J_2$ via
\begin{align*}
    J_2 := U_{\perp} \Lambda_{\perp}  U_{\perp}\t \tilde U_J.
\end{align*}
Again $U_{\perp}$ is the matrix such that the $s\times s$ matrix $[U_J, U_{\perp}]$ is orthogonal. 

\begin{proof}[Proof of Lemma \ref{lem:perp_bound}]
Define the matrix $E := \hat \Sigma_{JJ} - U_J U_J\t \Sigma_{JJ} U_J U_J\t$.   Note that
\begin{align*}
    \tilde U_J \Lambda - E \tilde U_J = U_J U_J\t \Sigma_{JJ} U_J U_J\t \tilde U_J.
\end{align*}
Following \citet{cape_signal-plus-noise_2019} (see also \citet{xie_bayesian_2019,tang_asymptotically_2017,tang_eigenvalues_2018}), by Assumption \ref{assumption:eigenvalues}, the spectra of $E$ and $\Lambda$ are disjoint almost surely, so the matrix $\tilde U$ can be expanded as a matrix series (Theorem VII.2.2 in \citet{bhatia_matrix_1997}) via
\begin{align*}
    \tilde U_J &= \sum_{m=0}^{\infty} E^m (U_J\Lambda U_J\t) \tilde U_J \Lambda^{-(m+1)}.
\end{align*}
Therefore,
\begin{align*}
  J_2 =   U_{\perp} \Lambda_{\perp} U_{\perp}\t \tilde U_J &= U_{\perp} \Lambda_{\perp} U_{\perp}\t E U \Lambda U\t \tilde U \Lambda^{-2} + \sum_{m=2}^{\infty} U_{\perp} \Lambda_{\perp} U_{\perp}\t E^m U \Lambda U_J\t \tilde U_J \Lambda^{-(m+1)}
\end{align*}
since the $0$-th term cancels off because $U_{\perp}\t U_J = 0$.  Taking the first term and setting $R$ to be the rest of the series, we have that,
\begin{align}
  \|  U_{\perp} \Lambda_{\perp} U_{\perp}\t \tilde U_J \tilde U_J\t \|_{2\to\infty} &= \|U_{\perp} \Lambda_{\perp} U_{\perp}\t E U_J \Lambda U_J\t \tilde U \Lambda^{-2} \|_{2\to\infty}+ \|R\|_{2\to\infty}, \label{residual}
\end{align}
where $R$ is the residual to be bounded.  We first bound the leading term.  We have that
\begin{align}
\|U_{\perp} \Lambda_{\perp} U_{\perp}\t E U_J \Lambda U_J\t \tilde U_J \Lambda^{-2} \|_{2\to\infty} &\leq \|U_{\perp} \Lambda_{\perp} U_{\perp}\t E U_J\|_{2\to\infty} \lambda_k\inv \kappa . \label{noresid}
\end{align}
We note that since $U_{\perp}\t U_J = 0$, then
\begin{align*}
    EU_J = (\hat \Sigma_{JJ} - U_J U_J\t\Sigma_{JJ} U_J U_J\t)U_J = (\hat \Sigma_{JJ} - \Sigma_{JJ})U_J.
\end{align*}
Define $\Sigma_{JJ}^{\perp} :=U_{\perp} \Lambda_{\perp} U_{\perp}\t.$  In light of the block structure in \eqref{blocks}, we see that we can write $\sjp EU_J$ via the sum of the terms
\begin{align*}
\frac{1}{n} (\sjp) \bigg((\Sigma^{1/2})_{JJ}& (Y_J\t Y_J -nI) (\Sigma^{1/2})_{JJ} + \Sigma_{JJ^c}^{1/2} Y_{J^c}\t Y_J (\Sigma^{1/2})_{JJ} \\
&+ (\Sigma^{1/2})_{JJ} Y_J\t Y_{J^c} (\Sigma_{J J^c}^{1/2})\t 
+ \Sigma_{JJ^c}^{1/2} (Y_{J^c}\t Y_{J^c} - nI) (\Sigma_{JJ^c}^{1/2})\t\bigg)U_J.
\end{align*}
Recalling that $(\Sigma_{JJ^c}^{1/2})\t U_J = 0$, this yields the only the terms
\begin{align*}
\frac{1}{n} (\sjp) \bigg((\Sigma^{1/2})_{JJ} (Y_J\t Y_J -nI) (\Sigma^{1/2})_{JJ} + \Sigma_{JJ^c}^{1/2} Y_{J^c}\t Y_J (\Sigma^{1/2})_{JJ} \bigg)U_J &= \sjp (\Sigma^{1/2})_{JJ}\bigg( \frac{ Y_J\t Y_J}{n} - I\bigg) U_J \Lambda^{1/2} \\&\qquad + \sjp \Sigma_{JJ^c}^{1/2} \frac{Y_{J^c}\t Y_J}{n} U_J \Lambda^{1/2}.
\end{align*}
Define $A_{3/2} := \sjp (\Sigma^{1/2})_{JJ}$, which satisfies $\|A_{3/2} \| \leq \sqrt{\lambda_1} \lambda_{k+1}$.   In $2\to\infty$ norm, we have that
\begin{align*}
  \|  A_{3/2}\bigg( \frac{ Y_J\t Y_J}{n} - I\bigg) U_J \Lambda^{1/2} \|_{2\to\infty} &\leq \sqrt{k\lambda_1} \max_{i,j} \bigg| \bigg( A_{3/2}\bigg( \frac{ Y_J\t Y_J}{n} - I\bigg) U_J\bigg)_{ij} \bigg|.
\end{align*}
Define the matrix $M$ via $M_{kl} := (A_{3/2})_{ik} U_{lj}$.  Fixing $i$ and $j$, note that we can write the $i,j$ entry above as
\begin{align*}
\bigg| \sum_{k,l} M_{kl} \bigg(\frac{1}{n} ( \sum_{q=1}^{n} (Y_{ql} Y_{qk} - \E Y_{ql} Y_{qk}) \bigg) \bigg| &=\frac{1}{n} \bigg| \sum_{q}  \sum_{k,l} M_{kl}\bigg( Y_{ql} Y_{qk} - \E Y_{ql} Y_{qk} \bigg) \bigg| \\
&\leq \frac{1}{n}  \sum_{q}  \bigg| \sum_{k,l} M_{kl}\bigg( Y_{ql} Y_{qk} - \E Y_{ql} Y_{qk} \bigg) \bigg| \\
&\leq \max_{q}  \bigg| \sum_{k,l} M_{kl}\bigg( Y_{ql} Y_{qk} - \E Y_{ql} Y_{qk} \bigg) \bigg|,
\end{align*}
which is a quadratic form in the random variables $Y_{ql}$ (for fixed $q$).   To bound this, we will apply the Hanson-Wright inequality (Theorem \ref{thm:hw} in Appendix \ref{sec:bernstein}), which requires bounding the Frobenius norm of $M$.  Note that we can bound the Frobenius norm of $M$ via
\begin{align*}
\| M \|_F^2 &= \sum_{k,l} M_{kl}^2 \\
&= \sum_{k,l} (A_{3/2})_{ik}^2 U_{lj}^2 \\
&= \| A_{3/2}\|_{2\to\infty}^2 \\
&\leq \bigg(\sqrt{\lambda_1} \lambda_{k+1} \bigg)^2.
\end{align*}
Therefore, applying the Hanson-Wright inequality shows that
\begin{align*}
\p\bigg( \bigg| \sum_{k,l} M_{kl}\bigg( Y_{ql} Y_{qk} - \E Y_{ql} Y_{qk} \bigg) \bigg| > t \bigg) \leq 2 \exp\bigg( -c \min\bigg\{ \frac{t^2}{\|M\|_F^2}, \frac{t}{\|M\|} \bigg\} \bigg).
\end{align*}
Set $t := C\sqrt{\frac{\log(s)+\log(k) + 5\log(p)}{n}} \sqrt{\lambda_1} \lambda_{k+1}$.  Then since $\frac{\log(p)}{n} = o(1)$, we see that with probability at least $1 - O(s\inv k\inv p^{-5})$ that
\begin{align*}
\bigg| \sum_{k,l} M_{kl}\bigg( Y_{ql} Y_{qk} - \E Y_{ql} Y_{qk} \bigg) \bigg| &\lesssim \sqrt{\lambda_1} \lambda_{k+1} \sqrt{\frac{\log(p)}{n}}.
\end{align*}
Taking a union bound over all $n$ random variables shows that with probability at least $1 - O(s\inv k\inv p^{-4})$,
\begin{align*}
\sqrt{k\lambda_1} \bigg| \bigg( A_{3/2}\bigg( \frac{ Y_J\t Y_J}{n} - I\bigg) U_J\bigg)_{ij} \bigg| &\lesssim \lambda_1 \lambda_{k+1} \sqrt{\frac{k\log(p)}{n}}.
\end{align*}
Taking a union bound over all $s$ rows and $k$ columns yields that with probability at least $1 - O(p^{-4})$,
\begin{align}
     \|  A_{3/2}\bigg( \frac{ Y_J\t Y_J}{n} - I\bigg) U_J \Lambda^{1/2} \|_{2\to\infty}  &\lesssim \lambda_{k+1}\lambda_1  \sqrt{\frac{k \log(p)}{n}}. \label{r1}
\end{align}
For the other term, proceeding similarly,
\begin{align*}
\| \sjp \Sigma_{JJ^c}^{1/2} \frac{Y_{J^c}\t Y_J}{n} U_J \Lambda^{1/2} \|_{2\to\infty} &\leq \sqrt{\lambda_1 k} \max_{i,j} \bigg| \bigg( (\Sigma_{JJ}^{\perp} \Sigma_{JJ^c}^{1/2}) \frac{Y_{J^c}\t Y_J}{n} U_J \bigg)_{ij} \bigg| \\
&\leq \sqrt{\lambda_1 k}  \max_{i,j} \max_q \bigg|\sum_{k=s+1}^{p} \sum_{l=1}^{s} (\Sigma_{JJ}^{\perp} \Sigma_{JJ^c}^{1/2})_{ik} Y_{qk} Y_{ql} (U_J)_{lj} \bigg|.
\end{align*}
For fixed $q$, $i$, and $j$, note that $k$ ranges from $s+1$ to $p$ and $l$ ranges from $1$ to $s$, so this is a sum of independent exponential random variables. We will bound these using  Bernstein's inequality (Theorem \ref{thm:genbernstein} in Appendix \ref{sec:bernstein}).  Note that the $\ell_2$ norm of the coefficients is bounded by 
\begin{align*}
\sum_{k=s+1}^{p} \sum_{l=1}^{s} (\Sigma_{JJ}^{\perp} \Sigma_{JJ^c}^{1/2})_{ik}^2 (U_J)_{lj}^2 &\leq \| \sjp \Sigma_{JJ^c}^{1/2} \|_{2\to\infty}^2 \leq (2 \sqrt{\lambda_1} \lambda_{k+1})^2.
\end{align*}
Similarly,
\begin{align*}
\max_{k,l}| (\Sigma_{JJ}^{\perp} \Sigma_{JJ^c}^{1/2})_{ik} (U_J)_{lj} | &\leq \|U_J\|_{2\to \infty} \max_{i,k}| e_i\t (\Sigma_{JJ}^{\perp} \Sigma_{JJ^c}^{1/2}) e_k | \\
&\leq 2 \| U_J \|_{2\to\infty} \sqrt{\lambda_1} \lambda_{k+1}.
\end{align*}
By the generalized Bernstein Inequality (Theorem \ref{thm:genbernstein}), we have for any fixed $i$,$j$, and $q$ that 
\begin{align*}
    \p\bigg(  \bigg|\sum_{k=s+1}^{p} \sum_{l=1}^{s} (\Sigma_{JJ}^{\perp} \Sigma_{JJ^c}^{1/2})_{ik} Y_{qk} Y_{ql} (U_J)_{lj} \bigg| > t\bigg) &\leq 2 \exp\bigg[-c \min\bigg( \frac{t^2}{(\sqrt{\lambda_1} \lambda_{k+1})^2}, \frac{t}{\|U\|_{2\to\infty}\sqrt{\lambda_1} \lambda_{k+1}} \bigg) \bigg].
\end{align*}
Taking $t = \sqrt{\lambda_1} \lambda_{k+1} \sqrt{\frac{\log(s) +\log(k) 5\log(p)}{n}}$ shows that this holds with probability at least $1- O(s\inv k\inv p^{-5})$. Taking a union bound over $s$ rows, $k$ columns, and $n$ different random variables shows that with probability at least $1 - O(p^{-4})$ that
\begin{align}
\| \sjp \Sigma_{JJ^c}^{1/2} \frac{Y_{J^c}\t Y_J}{n} U_J \Lambda^{1/2} \|_{2\to\infty} &\leq \sqrt{\lambda_1 k} \max_{i,j} \bigg| \bigg( (\Sigma_{JJ}^{\perp} \Sigma_{JJ^c}^{1/2}) \frac{Y_{J^c}\t Y_J}{n} U_J \bigg)_{ij} \bigg| \nonumber \\
&\lesssim \lambda_{k+1} \lambda_1 \sqrt{\frac{k\log(p)}{n}} \label{r2}
\end{align}
Combining  \eqref{r2} and \eqref{r1} with \eqref{noresid} yields that
\begin{align}
   \| U_{\perp} \Lambda_{\perp} U_{\perp}\t E U_J \Lambda  U_J\t \tilde U_J \Lambda^{-2} U_J\t  \|_{2\to\infty} &\lesssim \frac{\kappa}{\lambda_k} \|U_{\perp} \Lambda_{\perp} U_{\perp}\t E U_J\|_{2\to\infty} \nonumber \\
   &\lesssim \frac{\kappa }{\lambda_k} \bigg( \lambda_1 \lambda_{k+1} \sqrt{\frac{k\log(p)}{n}} \bigg) \nonumber\\
   &\lesssim \kappa^2 \lambda_{k+1} \sqrt{\frac{k\log(p)}{n}}. \label{term1bound}
\end{align}
So what remains is to bound the residual term $R$ in \eqref{residual}.  Recall the definition of $R$ via
\begin{align*}
    R := \sum_{m=2}^{\infty} U_{\perp} \Lambda_{\perp} U_{\perp}\t E^m U_J \Lambda U_J\t \tilde U_J \Lambda^{-(m+1)}.
\end{align*}
We will bound for each $m$, but for clarity, we will first study the case $m = 2$.  We have that
\begin{align*}
    U_{\perp} \Lambda_{\perp} U_{\perp}\t E^2 U_J &= U_{\perp} \Lambda_{\perp}U_{\perp}\t ( \hat \Sigma_{JJ} - U_J U_J\t \Sigma_{JJ} U_J U_J\t )( \hat \Sigma_{JJ} - U_J U_J\t \Sigma_{JJ} U_J U_J\t )U_J \\
    &= U_{\perp} \Lambda_{\perp} U_{\perp}\t( \hat \Sigma_{JJ} - U_J U_J\t \Sigma_{JJ} U_J U_J\t )( \hat \Sigma_{JJ} - \Sigma_{JJ} )U_J \\
    &= U_{\perp} \Lambda_{\perp} U_{\perp} (\hat \Sigma_{JJ} - \Sigma_{JJ})^2 U_J + (U_{\perp} \Lambda_{\perp} U_{\perp})^2 (\hat \Sigma_{JJ} - \Sigma_{JJ})U_J.
\end{align*}
The first term is readily bounded  by Lemma \ref{lem:spectral_bound}, and the second term can be bounded using the techniques in the previous part of the proof of this Lemma.  

We now generalize this strategy for each $m$, by first providing a similar identity to the one above. 
Define $\Delta := \hat \Sigma_{JJ} - \Sigma_{JJ}$.  Note that by definition $E = \Delta + U_{\perp} \Lambda_{\perp} U_{\perp}\t$ and that $E U_J = \Delta U_J$.  Then we have that
\begin{align}
   U_{\perp} \Lambda_{\perp} U_{\perp}\t E^m U_J &= U_{\perp} \Lambda_{\perp}\t U_{\perp}E^{m-1} \Delta U_J \nonumber \\
    &= U_{\perp} \Lambda_{\perp} U_{\perp}\t E^{m-2} (\Delta + U_{\perp} \Lambda_{\perp} U_{\perp}) \Delta U_J \nonumber \\
   &= U_{\perp} \Lambda_{\perp} U_{\perp}\t E^{m-2} \Delta U_J + U_{\perp} \Lambda_{\perp} U_{\perp}\t E^{m-2} U_{\perp} \Lambda_{\perp} U_{\perp}\t \Delta U_J. \label{induction}
\end{align}
%
%
Let $\mathfrak{s}(m)$ be the set of indices such that $s_1 + ... + s_{m} = m$.  Then for all $m$ we have that 
\begin{align*}
   U_{\perp} \Lambda_{\perp} U_{\perp}\t E^m U_J &= U_{\perp} \Lambda_{\perp} U_{\perp}\t \bigg[ \sum_{\mathfrak{s}(m)} \Delta^{s_1} (U_{\perp} \Lambda_{\perp} U_{\perp}\t)^{s_2} \Delta^{s_3} (U_{\perp} \Lambda_{\perp} U_{\perp}\t)^{s_4}\cdots (U_{\perp} \Lambda_{\perp} U_{\perp}\t)^{s_{m-1}}\Delta^{s_m} \bigg] U_J,
\end{align*}
which is essentially a noncommutative Binomial Theorem.

First, consider the case that $s_1, ..., s_m$ has only single powers of $\Delta$ appearing.   If $\Delta$ appears all the way on the right hand side; that is, $s_m = 1$, then for any integer $m_0$, we have that
\begin{align*}
\| U_{\perp} \Lambda_{\perp}^{m_0} U_{\perp}\t \Delta U_J \|_{2\to\infty} &\leq C \lambda_{k+1}^{m_0} \bigg( \lambda_1 \sqrt{\frac{k\log(p))}{n}}  \bigg),
\end{align*}
with probability at least $1- O(p^{-4})$ using analogous techniques to the steps leading up to Equation \eqref{term1bound} (i.e. the case $m_0 =1$).  If $\Delta$ is not on the right hand side, suppose that its index is $s_{g} = 1$.  Then this term is of the form
\begin{align*}
   (U_{\perp} \Lambda_{\perp} U_{\perp}\t)^{1+ s_1 + s_2 + ... + s_{g-1}} \Delta  (U_{\perp} \Lambda_{\perp} U_{\perp}\t)^{s_{g+1} + ... + s_{m_0}}U_J \equiv 0
\end{align*}
since $U_{\perp}\t U_J = 0$. So the only terms that  have at most one factor of $\Delta$ appearing are those that show up as $\Delta U_J$.

Next, if $s_1, ..., s_m$ is a set of integers and at least two of the terms $s_i$ that appear on the $\Delta$ factor are greater than 1, then
\begin{align*}
    \| U_{\perp} \Lambda_{\perp} U_{\perp}\t \Delta^{s_1} (U_{\perp} \Lambda_{\perp} U_{\perp}\t)^{s_2} \Delta^{s_3} (U_{\perp} \Lambda_{\perp} U_{\perp}\t)^{s_4}\cdots (U_{\perp} \Lambda_{\perp} U_{\perp}\t)^{s_{m-1}}\Delta^{s_m} U_J \|_{2\to\infty} &\leq \| \Delta \|^{2} \lambda_{k+1}^{m-1},
\end{align*}
provided that $\|\Delta \| < \lambda_{k+1}$, which happens by Assumption \ref{assumption:dimensions} and the spectral norm concentration in Lemma \ref{lem:spectral_bound} with probability at least $1 - O(p^{-4})$.  Fix this event.  Then for any $m$, there are at most $2^m$ ways to select exponents with a power of at least two on the term  $\|\Delta\|$. Therefore, this implies that for fixed $m$
\begin{align*}
 \|  U_{\perp } \Lambda_{\perp} U_{\perp}\t E^m U_J \|_{2\to\infty} &\leq \| U_{\perp} \Lambda_{\perp}^{m} U_{\perp}\t \Delta U_J \|\\&\qquad  + \sum_{\{m: \text{exponent on $\|\Delta\|$ is at least 2}\}}   \| U_{\perp} \Lambda_{\perp} U_{\perp}\t \Delta^{s_1}\cdots (U_{\perp}\Lambda_{\perp} U_{\perp}\t)^{s_{m-1}}\Delta^{s_m} U_J \|_{2\to\infty} 
 \\
&\leq C \lambda_{k+1}^m \bigg( \lambda_1 \sqrt{\frac{k\log(p))}{n}} \bigg) + 2^m \lambda_{k+1}^{m-1} \| \Delta \|^2.
\end{align*}
This bound corresponds to one such $m$, and hence is its own event.  In order to bound for all $m$, we follow a strategy in \citet{xia_statistical_2020}.  Let $\tilde m := \lceil \log(p) \rceil$.  Then
\begin{align*}
  \|  \sum_{m=2}^{\infty} &U_{\perp} \Lambda_{\perp} U_{\perp}\t E^m U_J \Lambda U_J\t \tilde U_J \Lambda^{-(m+1)} \|_{2\to\infty} \\
  &\leq \sum_{m=2}^{\infty} \| U_{\perp} \Lambda_{\perp} U_{\perp}\t E^m U_J \|_{2\to\infty} \frac{\lambda_1}{\lambda_k^{m+1}}
 \\
 &\leq 
    \sum_{m=2}^{{\tilde m}} \| U_{\perp} \Lambda_{\perp} U_{\perp}\t E^m U_J \|_{2\to\infty} \frac{\lambda_1}{\lambda_k^{m+1}} +  \sum_{m=\tilde m}^{\infty} \| U_{\perp} \Lambda_{\perp} U_{\perp}\t E^m U_J \|_{2\to\infty} \frac{\lambda_1}{\lambda_k^{m+1}} \\
   &\leq 
  \sum_{m=2}^{\tilde m} \bigg(C \lambda_{k+1}^m \bigg( \lambda_1 \sqrt{\frac{k\log(p))}{n}} \bigg) \frac{\lambda_1}{\lambda_{k}^{m+1}} \\&\qquad +  \sum_{m=2}^{\tilde m} \bigg(2^m \lambda_{k+1}^{m-1} \| \Delta \|^2 \bigg) \frac{\lambda_1}{\lambda_{k}^{m+1}} \\
   &\qquad +  \sum_{m=\tilde m}^{\infty } \frac{\lambda_1}{\lambda_k^{m+1}} \| \Delta \| \lambda_{k+1}^{m+1}.
  \end{align*}
Define 
\begin{align*}
    \delta_1 :&= C \kappa \bigg( \lambda_1 \sqrt{\frac{k\log(p))}{n}} \bigg) \\
    \delta_2 :&= \kappa \lambda_k\inv \| \Delta \|^2
\end{align*}
Then the three  sums above can be written as
\begin{align*}
  \delta_1   \sum_{m=2}^{\tilde m} \frac{\lambda_{k+1}^m}{\lambda_k^{m}} + \delta_2\sum_{m=2}^{\tilde m}  \frac{2^m \lambda_{k+1}^{m-1}}{\lambda_k^{m-1}} + \lambda_1 \|\Delta \| \sum_{m=\tilde m}^{\infty } \frac{\lambda_{k+1}^{m+1}}{\lambda_k^{m+1}} &\lesssim   \delta_1 \frac{\lambda_{k+1}^2}{\lambda_k^2} + \delta_2 (1+\eps)\frac{\lambda_{k+1}}{\lambda_k} + \lambda_1 \| \Delta \| \bigg( \frac{\lambda_{k+1}}{\lambda_k} \bigg)^{\log(p)} \\
  &\lesssim  \delta_1 \frac{\lambda_{k+1}^2}{\lambda_k^2}  +  \delta_2 \frac{\lambda_{k+1}}{\lambda_k} + \lambda_1^2 \sqrt{\frac{s\log(p)}{n}} (1-\eps)^{\log(p)}.
\end{align*}
Here, the penultimate inequality follows from the fact that by Assumption \ref{assumption:eigenvalues}, we have that for some $\eps > 1/64$, $2\lambda_{k+1}/\lambda_k < 1 - \eps$, and hence the second term's geometric series converges.  The final inequality follows from the assumption $\lambda_{k+1}/\lambda_k < (1-\eps)$. Note that this event holds with probability at least 
$1 - O(\log(p) p^{-4}) \geq 1 -O(p^{-3})$.
Noting that
\begin{align*}
    \| \Delta \| \lesssim \lambda_1 \sqrt{\frac{s\log(p)}{n}}
\end{align*}
by Lemma \ref{lem:spectral_bound}, we see that the resulting bound for the residual satisfies
\begin{align*}
\| R \|_{2\to\infty} &\lesssim \delta_1 \big( \frac{\lambda_{k+1}}{\lambda_k} \big)^2 + \delta_2\frac{\lambda_{k+1}}{\lambda_k} + \lambda_1^2 \sqrt{\frac{s\log(p)}{n}} (1-\eps)^{\log(p)} \\
&\lesssim  \big( \frac{\lambda_{k+1}}{\lambda_k} \big)^2 \kappa  \bigg( \lambda_1 \sqrt{\frac{k\log(p))}{n}} \bigg) + \frac{\lambda_{k+1}}{\lambda_k}\kappa \lambda_k\inv \| \Delta \|^2 + \lambda_1^2 \sqrt{\frac{s\log(p)}{n}} (1-\eps)^{\log(p)} \\
&\lesssim \big( \frac{\lambda_{k+1}}{\lambda_k} \big)^2 \kappa  \bigg( \lambda_1 \sqrt{\frac{k\log(p))}{n}}  \bigg) + \frac{\lambda_{k+1}}{\lambda_k}\kappa \lambda_k\inv \| \Delta \|^2 \\
&\lesssim \big( \frac{\lambda_{k+1}}{\lambda_k} \big)^2 \kappa  \bigg( \lambda_1 \sqrt{\frac{k\log(p))}{n}}\bigg)+ \frac{\lambda_{k+1}}{\lambda_k}\kappa \lambda_k\inv \lambda_1^2 \frac{s\log(p)}{n} \\
&\lesssim \kappa^2 \lambda_{k+1} \sqrt{\frac{k\log(p)}{n}} + \lambda_{k+1} \kappa^3 \frac{s\log(p)}{n}
\end{align*}
with probability at least $1 - O(p^{-3})$ by the assumption $\eps > \frac{1}{64}$.  Combining with our initial bound in \eqref{term1bound}, we see that
\begin{align*}
    \| J_2 \|_{2\to\infty} 
    &\lesssim \kappa^2 \lambda_{k+1} \sqrt{\frac{k \log (p)}{n}} + 
    \lambda_{k+1}\kappa^3 \frac{s\log(p)}{n}
\end{align*}
with probability at least $1 - O(p^{-3})$ as desired.
\end{proof}

\subsection{Proof of Lemmas \ref{lem:K1} and \ref{lem:K2}} \label{sec:final_bounds}
Recall the statement of Lemma \ref{lem:K1}.

\kone*

Recall $K_1$ is given by
\begin{align*}
    K_1 :&= \frac{1}{n}(\Sigma^{1/2})_{JJ} (Y_J\t Y_J - n I)(\Sigma^{1/2})_{JJ} \tilde U.
\end{align*}

\begin{proof}[Proof of Lemma \ref{lem:K1}]
Note that since $U_JU_J\t + U_{\perp} U_{\perp}\t = I$, we have that
\begin{align}
   \| \frac{1}{n}(\Sigma^{1/2})_{JJ} (Y_J\t Y_J - n I)(\Sigma^{1/2})_{JJ} \tilde U_J \|_{2\to\infty}&\leq   \| \frac{1}{n}(\Sigma^{1/2})_{JJ} (Y_J\t Y_J - n I)(\Sigma^{1/2})_{JJ} U_J U_J\t \tilde U_J \|_{2\to\infty} \nonumber\\
   &\qquad + \|\frac{1}{n}(\Sigma^{1/2})_{JJ} (Y_J\t Y_J - n I)(\Sigma^{1/2})_{JJ}U_{\perp} U_{\perp}\t \tilde U_J  \|_{2\to\infty} \nonumber\\
   &\leq \| \frac{1}{n}(\Sigma^{1/2})_{JJ} (Y_J\t Y_J - n I)(\Sigma^{1/2})_{JJ} U_J \|_{2\to\infty} \| U_J\t \tilde U_J \| \nonumber\\
   &\qquad + \|\frac{1}{n}(\Sigma^{1/2})_{JJ} (Y_J\t Y_J - n I)(\Sigma^{1/2})_{JJ}U_{\perp} \|_{2\to\infty} \| U_{\perp}\t \tilde U_J \|\nonumber\\
   &\leq  \sqrt{k} \| \frac{1}{n}(\Sigma^{1/2})_{JJ} (Y_J\t Y_J - n I)(\Sigma^{1/2})_{JJ} U_J  \|_{\max}\nonumber\\
   &\qquad + \sqrt{s} \|\frac{1}{n}(\Sigma^{1/2})_{JJ} (Y_J\t Y_J - n I)(\Sigma^{1/2})_{JJ}U_{\perp} \|_{\max} \| U_{\perp}\t \tilde U_J \|, \label{bigbadboy}
\end{align}
We bound each term inside the max norm, using a strategy similar to the beginning of the proof of Lemma \ref{lem:perp_bound}.    For the first term, note that we can write the absolute value of its $i,j$ entry via
\begin{align*}
\bigg| \frac{1}{n} \sum_{q} \sum_{k,l}  &\bigg( (\Sigma^{1/2})_{JJ} \bigg)_{ik} ( Y_{qk} Y_{ql} - \E Y_{qk} Y_{ql} )  ) \bigg( (\Sigma^{1/2})_{JJ} U_J\bigg)_{kj} \bigg| \\
&\leq \max_{q} \bigg|  \sum_{k,l}  \bigg( (\Sigma^{1/2})_{JJ} \bigg)_{ik} ( Y_{qk} Y_{ql} - \E Y_{qk} Y_{ql} )  ) \bigg( (\Sigma^{1/2})_{JJ} U_J\bigg)_{lj} \bigg|.
\end{align*}
We focus on bounding for fixed $q$. This is a quadratic form in the random variable $\{Y_{qk}\}_{k=1}^{s}$.  Define the matrix $M$ via
\begin{align*}
M_{kl} := \bigg( (\Sigma^{1/2})_{JJ} \bigg)_{ik}\bigg( (\Sigma^{1/2})_{JJ} U_J\bigg)_{lj}.
\end{align*}
Note that 
\begin{align*}
\| M \|_F^2 &= \sum_{k,l} \bigg( (\Sigma^{1/2})_{JJ} \bigg)_{ik}^2 \bigg( (\Sigma^{1/2})_{JJ} U_J\bigg)_{lj}^2 \\
&\leq \lambda_1 \| (\Sigma^{1/2})_{JJ} \|_{2\to\infty}^2 \\
&\leq \lambda_1^2.
\end{align*}
Therefore, for any fixed $q$, $i$, and $j$, applying the Hanson-Wright inequality (Theorem \ref{thm:hw} in Appendix \ref{sec:bernstein}),
\begin{align*}
\p\bigg( \bigg|  \sum_{k,l}  \bigg( (\Sigma^{1/2})_{JJ} \bigg)_{ik} ( Y_{qk} Y_{ql} - \E Y_{qk} Y_{ql} )  ) \bigg( (\Sigma^{1/2})_{JJ} U_J\bigg)_{lj} \bigg| > t \bigg) &\leq 2 \exp\bigg( -c \min\bigg\{ \frac{t^2}{\lambda_1^2}, \frac{t}{\| M \|} \bigg\} \bigg).
\end{align*}
Setting $t =C \lambda_1 \sqrt{\frac{\log(s) + \log(k) + 5\log(p)}{n}}$ and taking a union bound for all $n$ random variables shows that with probability at least $1 - O(s\inv k\inv p^{-4})$ that
\begin{align*}
\max_q \bigg|  \sum_{k,l}  \bigg( (\Sigma^{1/2})_{JJ} \bigg)_{ik} ( Y_{qk} Y_{ql} - \E Y_{qk} Y_{ql} )  ) \bigg( (\Sigma^{1/2})_{JJ} U_J\bigg)_{lj} \bigg| &\lesssim \lambda_1 \sqrt{\frac{\log(p)}{n}}.
\end{align*}
Therefore, taking a union bound over all $s$ rows and $k$ columns shows that  with probability at least $1 - O(p^{-4})$ that
\begin{align}
\| \frac{1}{n}(\Sigma^{1/2})_{JJ} (Y_J\t Y_J - n I)(\Sigma^{1/2})_{JJ} U_J  \|_{\max} &\lesssim \lambda_1 \sqrt{\frac{\log(p)}{n}}. \label{bigboy1}
\end{align}
The exact same argument yields with the same probability that
\begin{align}
\|\frac{1}{n}(\Sigma^{1/2})_{JJ} (Y_J\t Y_J - n I)(\Sigma^{1/2})_{JJ}U_{\perp} \|_{\max}&\lesssim \lambda_1 \sqrt{\frac{\log(p)}{n}}. \label{bigboy2}
\end{align}
Combining \eqref{bigbadboy} with \eqref{bigboy1} and \eqref{bigboy2} yields
\begin{align*}
 \| \frac{1}{n}(\Sigma^{1/2})_{JJ} (Y_J\t Y_J - n I)(\Sigma^{1/2})_{JJ} \tilde U_J \|_{2\to\infty}&\leq \sqrt{k} \| \frac{1}{n}(\Sigma^{1/2})_{JJ} (Y_J\t Y_J - n I)(\Sigma^{1/2})_{JJ} U_J  \|_{\max}\nonumber\\
   &\qquad + \sqrt{s} \|\frac{1}{n}(\Sigma^{1/2})_{JJ} (Y_J\t Y_J - n I)(\Sigma^{1/2})_{JJ}U_{\perp} \|_{\max} \| U_{\perp}\t \tilde U_J \| \\
   &\lesssim \lambda_1 \sqrt{\frac{k\log(p)}{n}} + \lambda_1 \sqrt{\frac{s\log(p)}{n}} \| U_{\perp}\t \tilde U_J \|.
   \end{align*}
    So what remains is to bound the term $\|U_{\perp}\t \tilde U_J\|$,  However, we note that this is simply (by a factor of $\sqrt{2}$) the $\sin\Theta$ distance between the subspace $U_J U_J\t$ and $\tilde U_J\tilde U_J\t$ (see Lemma \ref{lem:sintheta} in Appendix \ref{sec:bernstein}).  Therefore, by Proposition \ref{lem:dk}, we have that this can be bounded by
\begin{align*}
\|U_{\perp}\t \tilde U_J\| \lesssim \frac{\lambda_{1}}{\lambda_{k}-\lambda_{k+1}} \sqrt{\frac{s \log ( p)}{n}}.
\end{align*}
Putting it all together, this yields that with probability at least $1 - O(p^{-4})$ that
\begin{align*}
    \| K_1 \|_{2\to\infty} &=  \| \frac{1}{n}(\Sigma^{1/2})_{JJ} (Y_J\t Y_J - n I)(\Sigma^{1/2})_{JJ} \tilde U_J \|_{2\to\infty} \\
    & \lesssim \frac{\lambda_1^2}{\lambda_k - \lambda_{k+1}} \frac{ s \log(p)}{n} + \lambda_1   \sqrt{\frac{k\log(p)}{n}},
\end{align*}
which is the desired bound.
\end{proof}

Again, we repeat the statement of Lemma \ref{lem:K2}.

\ktwo*

Recall that
\begin{align*}
     K_2 :&= \Sigma_{JJ^c}^{1/2} Y_{J^c}\t Y_J (\Sigma^{1/2})_{JJ} \tilde U_J
\end{align*}
\begin{proof}[Proof of Lemma \ref{lem:K2}]
We have that
\begin{align}
    \|\frac{1}{n} \Sigma_{JJ^c}^{1/2} Y_{J^c}\t Y_J (\Sigma^{1/2})_{JJ} \tilde U_J \|_{2\to\infty} &\leq   \|\frac{1}{n} \Sigma_{JJ^c}^{1/2} Y_{J^c}\t Y_J  (\Sigma^{1/2})_{JJ} U_J\|_{2\to\infty} \nonumber \\
    &\qquad +  \|\frac{1}{n} \Sigma_{JJ^c}^{1/2} Y_{J^c}\t Y_J  (\Sigma^{1/2})_{JJ} U_{\perp} \|_{2\to\infty}  \| U_{\perp}\t \tilde U_J\| \nonumber \\
    &\leq \sqrt{k} \| \frac{1}{n} \Sigma_{JJ^c}^{1/2} Y_{J^c}\t Y_J  (\Sigma^{1/2})_{JJ} U_J\|_{\max} \nonumber \\
    &\qquad + \sqrt{s} \|\frac{1}{n} \Sigma_{JJ^c}^{1/2} Y_{J^c}\t Y_J  (\Sigma^{1/2})_{JJ} U_{\perp} \|_{\max}  \| U_{\perp}\t \tilde U_J\|. \label{bigbaby}
\end{align}
We bound each norm inside the max separately.  Define the random variable 
 $\eta_{ij}$ as the $i,j$ entry of the matrix $\Sigma_{JJ^c}^{1/2} Y_{J^c}\t Y_J  (\Sigma^{1/2})_{JJ} U_J$.  Then 
\begin{align*}
    \eta_{ij} &= \frac{1}{n} \sum_{q=1}^{n} \sum_{k=1}^{s} \sum_{l=1}^{p-s} [ \Sigma_{JJ^c}^{1/2}]_{il} \xi^{(q)}_{s+l,k}\bigg( (\Sigma^{1/2})_{JJ} U_J \bigg)_{kj},
\end{align*}
where $\xi^{(q)}_{s+l,k} := Y_{q,s+l} Y_{qk}$.  Following a strategy similar to the proof of Lemma \ref{lem:perp_bound}, we have to bound both the maximum and sum of squared $\psi_1$ norms of the random variable
\begin{align*}
  \alpha_{qlj}:=  \frac{1}{n} [ \Sigma_{JJ^c}^{1/2}]_{il} \xi^{(q)}_{s+l,k} \bigg( (\Sigma^{1/2})_{JJ} U_J \bigg)_{kj}.
\end{align*}
The squared entries satisfy
\begin{align*}
      \| \frac{1}{n} [ \Sigma_{JJ^c}^{1/2}]_{il} \xi^{(q)}_{s+l,k} \bigg( (\Sigma^{1/2})_{JJ} U_J \bigg)_{kj} \|_{\psi_1}^2 &\leq \frac{1}{n^2} ([ \Sigma_{JJ^c}^{1/2}]_{il})^2 \bigg( (\Sigma^{1/2})_{JJ} U_J \bigg)_{kj}^2.
\end{align*}
Summing up over $q,l,j$,  
\begin{align*}
    \sum_{q=1}^{n} \sum_{k=1}^{s} \sum_{l=1}^{p-s} \| \alpha_{qlj} \|_{\psi_1}^2 &\leq \frac{1}{n} \sum_{k=1}^s \sum_{l=1}^{p-s} ([ \Sigma_{JJ^c}^{1/2}]_{il})^2 \bigg( (\Sigma^{1/2})_{JJ} U_J \bigg)_{kj}^2 \\
    &\leq  \frac{1}{n} \sum_{k=1}^s \bigg( (\Sigma^{1/2})_{JJ} U_J \bigg)_{kj}^2 \| \Sigma_{JJ^c}^{1/2} \|_{2\to\infty}^2 \\
    &\leq \frac{\lambda_1 \| \Sigma_{JJ^c}^{1/2} \|_{2\to\infty}^2 }{n}.
\end{align*}
Also,
\begin{align*}
    \max_{q,l,j} \| \alpha_{qlj} \|_{\psi_1} &\leq \frac{1}{n} \sqrt{\lambda_1} \| \Sigma_{JJ^c}^{1/2} \|_{2\to\infty}.
\end{align*}
By the the Generalized Bernstein inequality (Theorem \ref{thm:genbernstein} in  Appendix \ref{sec:bernstein}),
\begin{align*}
    \p\bigg( |\eta_{ij}| > t \bigg) \leq 2 \exp\bigg( -cn \min\bigg[\frac{t^2}{\lambda_1\| \Sigma_{JJ^c}^{1/2} \|_{2\to\infty}^2 }, \frac{t}{ \sqrt{\lambda_1}\| \Sigma_{JJ^c}^{1/2} \|_{2\to\infty}}\bigg] \bigg).
\end{align*}
Again taking $t = C\| \Sigma_{JJ^c}^{1/2} \|_{2\to\infty} \sqrt{\lambda_1} \sqrt{\frac{\log(s) + \log(k) + 4\log(p)}{n}}$ shows that this holds with probability $1 - O(s\inv k\inv p^{-4})$.  Taking a union over all $s$ rows and $k$ columns of the matrix yields that
\begin{align*}
   \| \frac{1}{n} \Sigma_{JJ^c}^{1/2} Y_{J^c}\t Y_J  (\Sigma^{1/2})_{JJ} U_J\|_{\max} &\lesssim \| \Sigma_{JJ^c}^{1/2} \|_{2\to\infty}\sqrt{ \lambda_1} \sqrt{\frac{\log(p)}{n}} \\
    &\lesssim \lambda_1  \sqrt{\frac{\log(p)}{n}}.
\end{align*}
Applying precisely the same argument to the other term yields with probability $1 - O(p^{-4})$ that
\begin{align*}
 \|\frac{1}{n} \Sigma_{JJ^c}^{1/2} Y_{J^c}\t Y_J  (\Sigma^{1/2})_{JJ} U_{\perp} \|_{\max} &\lesssim \lambda_1 \sqrt{\frac{\log(p)}{n}}.
 \end{align*}
 Therefore, combining these bounds with the initial bound in \eqref{bigbaby} and Proposition \ref{lem:dk} and the equivalent expressions for the $\sin\Theta$ distances (Lemma \ref{lem:sintheta} in Appendix \ref{sec:bernstein}), we have that with probability at least $1 - O(p^{-4})$,
 \begin{align*}
 \|K_2\|_{2\to\infty} = 
 \|\frac{1}{n} \Sigma_{JJ^c}^{1/2} Y_{J^c}\t Y_J (\Sigma^{1/2})_{JJ} \tilde U_J \|_{2\to\infty}  &\leq \sqrt{k} \| \frac{1}{n} \Sigma_{JJ^c}^{1/2} Y_{J^c}\t Y_J  (\Sigma^{1/2})_{JJ} U_J\|_{\max} \\
    &\qquad + \sqrt{s} \|\frac{1}{n} \Sigma_{JJ^c}^{1/2} Y_{J^c}\t Y_J  (\Sigma^{1/2})_{JJ} U_{\perp} \|_{\max}  \| U_{\perp}\t \tilde U_J\| \\
    &\lesssim \lambda_1 \sqrt{\frac{k\log(p)}{n}} + \frac{\lambda_1^2}{\lambda_k - \lambda_{k+1}} \frac{s\log(p)}{n}
   \end{align*}
 as desired. 
\end{proof}

\subsection{Proof of Lemma \ref{lem:K3}} \label{sec:proofsk3k4}
It will be useful to collect some properties of the matrix $\Sigma_{JJ^c}^{1/2}$, which we state as a proposition. 

\begin{proposition}[Properties of the Matrix $\Sigma_{JJ^c}^{1/2}$] \label{prop:sigma_props}
The matrix $\Sigma_{JJ^c}^{1/2}$ satisfies
\begin{align*}
    \| \Sigma_{JJ^c}^{1/2} \| 
           &\leq 2\sqrt{\lambda_1}
\end{align*}
Furthermore, the left singular subspace of $\Sigma^{1/2}_{JJ^c}$ must contain columns of $U_{\perp}$.
\end{proposition}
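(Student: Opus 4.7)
The plan is to dispatch the two claims separately using only the spectral decomposition of $\Sigma$ together with the row-sparsity of $U$. For the operator-norm bound, I observe that $\Sigma^{1/2}_{JJ^c}$ is an off-diagonal block of the PSD matrix $\Sigma^{1/2}$, and the spectral norm of any submatrix is dominated by the spectral norm of the full matrix. Hence $\|\Sigma^{1/2}_{JJ^c}\| \leq \|\Sigma^{1/2}\| = \sqrt{\lambda_1} \leq 2\sqrt{\lambda_1}$; the extra factor of $2$ is harmless slack used for convenience in the main text.

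For the left-singular-subspace claim, I would expand $\Sigma$ in its eigenbasis. Write $\Sigma = U \Lambda U\t + V \Lambda_V V\t$, where $V \in \mathbb{O}(p, p-k)$ collects the trailing $p-k$ eigenvectors and $\Lambda_V$ their eigenvalues; since $U$ and $V$ share spectral axes, $\Sigma^{1/2} = U \Lambda^{1/2} U\t + V \Lambda_V^{1/2} V\t$. Restricting to the $(J, J^c)$ block and using the sparsity $U_{J^c} = 0$ kills the first term, yielding the factorization
\[
\Sigma^{1/2}_{JJ^c} = V_J \Lambda_V^{1/2} V_{J^c}\t.
\]
This identifies the column space (equivalently the left-singular subspace) of $\Sigma^{1/2}_{JJ^c}$ as a subspace of the column span of $V_J$.

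Next I would argue that each column of $V_J$ lies in the span of $U_\perp$. Orthogonality $V\t U = 0$ gives $V_J\t U_J + V_{J^c}\t U_{J^c} = 0$, and invoking $U_{J^c} = 0$ once more reduces this to $V_J\t U_J = 0$. Since $[U_J, U_\perp]$ is an orthogonal basis of $\R^s$, this forces every column of $V_J$ into the column space of $U_\perp$. Chaining the two inclusions completes the proof.

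There is no genuine technical obstacle here: the argument is entirely linear-algebraic. The only subtlety worth flagging is that the sparsity hypothesis $U_{J^c} = 0$ must be invoked twice — once to kill the leading contribution to the off-diagonal block, and once in the orthogonality identity between $V$ and $U$ — so that both the range of $\Sigma^{1/2}_{JJ^c}$ and the range of $V_J$ land inside the complementary subspace spanned by $U_\perp$.
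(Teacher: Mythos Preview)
Your proof is correct and arguably cleaner than the paper's. For the operator-norm bound, the paper writes the off-diagonal block as $\Sigma^{1/2}$ minus its block-diagonal part and applies a triangle inequality, which is where the factor $2$ comes from; your observation that any submatrix has spectral norm bounded by the full matrix gives $\|\Sigma^{1/2}_{JJ^c}\| \leq \sqrt{\lambda_1}$ directly, so the $2$ is indeed slack. For the singular-subspace claim, the paper reads off $(\Sigma^{1/2}_{JJ^c})^\top U_J = 0$ from the eigenvector equation $\Sigma^{1/2}\begin{pmatrix}U_J\\0\end{pmatrix} = \begin{pmatrix}U_J\\0\end{pmatrix}\Lambda^{1/2}$ and then runs an SVD argument, whereas you expand $\Sigma^{1/2}$ in its full eigenbasis and use $U_{J^c}=0$ twice to obtain the factorization $\Sigma^{1/2}_{JJ^c} = V_J \Lambda_V^{1/2} V_{J^c}^\top$ together with $V_J^\top U_J = 0$. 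These are two presentations of the same orthogonality fact; your route has the mild advantage of exhibiting an explicit factorization of the block, while the paper's route avoids introducing the trailing eigenvectors $V$ at all.
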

\begin{proof}[Proof of Proposition \ref{prop:sigma_props}]
First, we note that
\begin{align*}
    \| \Sigma_{JJ^c}^{1/2} \| 
   &= \bigg \| \begin{pmatrix} 0 & (\Sigma^{1/2})_{JJ^c}\\ ((\Sigma^{1/2})_{JJ^c})\t & 0 \end{pmatrix}\bigg\| \\
        &\leq \| \Sigma \|^{1/2} + \bigg\| \begin{pmatrix} (\Sigma^{1/2})_{JJ} & 0 \\ 0 & \Sigma_{J^c J^c}^{1/2} \end{pmatrix} \bigg\| \\
        &\leq 2\sqrt{\lambda_1},
\end{align*}
since eigenvalues bound eigenvalues of any principal submatrix. For the second claim, note that 
\begin{align*}
    \Sigma^{1/2} \begin{pmatrix}U_J \\0 \end{pmatrix} &= \begin{pmatrix}
    (\Sigma^{1/2})_{JJ}& (\Sigma^{1/2})_{JJ^c}\\ ((\Sigma^{1/2})_{JJ^c})\t &  \Sigma_{J^c J^c}^{1/2} \end{pmatrix}  \begin{pmatrix}U_J \\0 \end{pmatrix} \\
    &= \begin{pmatrix} U_J \\ 0 \end{pmatrix} \Lambda^{1/2}.
\end{align*}
This shows that the matrix $(\Sigma^{1/2}_{JJ^c})\t$ satisfies $(\Sigma^{1/2}_{JJ^c})\t U_J = 0$, so that its null space must contain the space spanned by $U_J$.  However, this also shows that since $(\Sigma^{1/2}_{JJ^c})\t \in \R^{(p-s)\times s}$, then its rank is at most $s -k$.  Hence, define $(\Sigma^{1/2}_{JJ^c})\t= V_1 D V_2\t$ as the reduced singular value decomposition of  $(\Sigma^{1/2}_{JJ^c})\t$.  Since its rank is at most $s- k$, we have that $V_1 \in \mathbb{O}(p-s,s-k)$, $V_2 \in \mathbb{O}(s,s-k)$, and $D$ is an $s -k \times s-k$ diagonal matrix of singular values.  

Since  $(\Sigma^{1/2}_{JJ^c})\t U_J = V_1 D V_2\t U_J =0$, the term $V_2 \in \mathbb{O}(s,s-k)$ must span a space perpendicular to $U_J$.  The only matrix up to choice of basis in $\mathbb{O}(s,s-k)$ satisfying $V_2 \t U_J = 0$ is the matrix $U_{\perp}$, which establishes the second claim.
\end{proof}

Therefore, all this shows that
\begin{itemize}
    \item The left singular subspace of $\Sigma^{1/2}_{JJ^c}$ contains columns of $U_{\perp}$;
    \item Its singular values are all uniformly bounded by $2\sqrt{\lambda_1}$.
\end{itemize}

We are now prepared to prove Lemma \ref{lem:K3}. 
\kthree*


\begin{proof}[Proof of Lemma \ref{lem:K3}]
Let $\Sigma_{JJ^c}^{1/2}$ have singular value decomposition $ U_{\perp}D V\t$, where $U_{\perp} \in \mathbb{O}(s,s-k)$, $D_{ii} \geq 0$, $1\leq i\leq s- k$, $V \in \mathbb{O}(p-s, s - k)$.  We will show the result for $D_{ii} > 0$, though the same proof goes through if $D_{ii} = 0$ for some $i$.  

Then the term $K_3$ satisfies
\begin{align}
    \| K_3 \|_{2\to\infty} &= \| (\Sigma^{1/2})_{JJ} \frac{Y_J\t Y_{J^c}}{n} (\Sigma_{JJ^c}^{1/2})\t \tilde U_J \|_{2\to\infty} \nonumber \\
    &\leq \| (\Sigma^{1/2})_{JJ} \frac{Y_J\t Y_{J^c}}{n}  V D U_{\perp}\t \tilde U_J\|_{2\to\infty} \nonumber \\
     &\leq \| (\Sigma^{1/2})_{JJ} \frac{Y_J\t Y_{J^c}}{n}  V D U_{\perp}\t U_{\perp}\|_{2\to\infty} \| U_{\perp}\t \tilde U_J\| \nonumber \\
     &\leq \| (\Sigma^{1/2})_{JJ} \frac{Y_J\t Y_{J^c}}{n}  V  \|_{2\to\infty} \sqrt{\lambda_1} \| U_{\perp}\t \tilde U_J\|.  \label{K31}
\end{align}
The term $\|U_{\perp}\t \tilde U_J\|$ can be bounded via Proposition \ref{lem:dk} and Lemma \ref{lem:sintheta} in Appendix \ref{sec:bernstein}.  So what remains is to bound the $2\to\infty$ norm in \eqref{K31}.  Note that the matrix $V$ is of column dimension at most $(s-k)$. Hence, each of the $s$ rows of the matrix
$(\Sigma^{1/2})_{JJ} Y_J Y_{J^c} V$ is of dimension at most $s- k$.  

Following  a strategy similar to that in Lemmas \ref{lem:K1} and \ref{lem:K2}, we have that
\begin{align*}
    \| (\Sigma^{1/2})_{JJ} \frac{Y_J\t Y_{J^c}}{n}  V \|_{2\to\infty} &\leq \sqrt{s-k} \max_{i,j} \bigg| (\Sigma^{1/2})_{JJ} \frac{Y_J\t Y_{J^c}}{n}  V  \bigg|_{i,j} \\
    &\leq \sqrt{s} \max_{i,j} \bigg| (\Sigma^{1/2})_{JJ} \frac{Y_J\t Y_{J^c}}{n}  V  \bigg|_{i,j}.
\end{align*}
By analogous arguments as in Lemma  \ref{lem:K2}, the $i,j$ entry is a sum of independent mean-zero subexponential random variables, each with $\psi_1$ norm bounded $\frac{1}{n} \sqrt{\lambda_1}$.  Therefore, by Bernstein's inequality, any $i,j$ entry is bounded by
\begin{align*}
    C \sqrt{\lambda_1} \sqrt{\frac{\log(p)}{n}}
\end{align*}
with probability at most $1 - O(p^{-3})$.  Combining with Proposition \ref{lem:dk}, we have the bound
\begin{align*}
     \| K_3 \|_{2\to\infty} &\lesssim \lambda_1 \sqrt{\frac{s\log(p)}{n}} \| U_{\perp}\t \tilde U_J \| \\
     &\lesssim \lambda_1 \sqrt{\frac{s \log(p)}{n}} \bigg(  \frac{\lambda_1}{\lambda_k- \lambda_{k+1}}\sqrt{\frac{s \log(p)}{n}}\bigg)\\
     &\lesssim \frac{\lambda_1^2 }{\lambda_k - \lambda_{k+1}} \frac{ s  \log(p)}{n}
\end{align*}
as desired.

For the term $K_4$, we see that 
\begin{align}
    \| K_4 \|_{2\to\infty} &= \| \Sigma^{1/2}_{JJ^c} \bigg( \frac{Y_{J^c}\t Y_{J^c}}{n} - I \bigg) VD U_{\perp}\t \tilde U_J\|_{2\to\infty} \nonumber \\
    &\leq  \| \Sigma^{1/2}_{JJ^c} \bigg( \frac{Y_{J^c}\t Y_{J^c}}{n} - I \bigg) V\|_{2\to\infty} \sqrt{\lambda_1} \| U_{\perp}\t \tilde U_J\| \nonumber \\
    &\leq \sqrt{s\lambda_1} \| \Sigma^{1/2}_{JJ^c} \bigg( \frac{Y_{J^c}\t Y_{J^c}}{n} - I \bigg) V\|_{\max}  \| U_{\perp}\t \tilde U_J\|. \label{bigboybigboy}
\end{align}
We will bound the term inside the max norm for fixed $i$ and $j$.   Observe that 
\begin{align*}
   \bigg| \bigg( \Sigma^{1/2}_{JJ^c} \bigg( \frac{Y_{J^c}\t Y_{J^c}}{n} - I \bigg) V \bigg)_{ij} \bigg| &= \max_{i,j} \bigg| \frac{1}{n} \sum_{q} \sum_{k,l} \bigg( \Sigma^{1/2}_{JJ^c}\bigg)_{ik} ( Y_{qk} Y_{ql} - \E Y_{qk} Y_{ql} ) V_{lj}\bigg| \\
  &\leq  \max_q \bigg| \sum_{k,l} \bigg( \Sigma^{1/2}_{JJ^c}\bigg)_{ik} ( Y_{qk} Y_{ql} - \E Y_{qk} Y_{ql} ) V_{lj}\bigg|.
\end{align*}
We will first bound the term inside the absolute value for fixed $q$ by Hanson-Wright (Theorem \ref{thm:hw} in Appendix \ref{sec:bernstein}).  Let $M$ be the matrix defined via
\begin{align*}
    M_{kl} := \bigg( \Sigma_{JJ^c}^{1/2} \bigg)_{ik} V_{lj}.
\end{align*}
Then
\begin{align*}
    \| M \|_F^2 &= \sum_{k,l} \bigg( \Sigma_{JJ^c}^{1/2} \bigg)_{ik}^2 V_{lj}^2 
    = \sum_k  \bigg( \Sigma_{JJ^c}^{1/2} \bigg)_{ik}^2 
    \leq \| \Sigma_{JJ^c}^{1/2} \|_{2\to\infty}^2 
    \leq 4\lambda_1.
\end{align*}
Therefore, by applying the Hanson-Wright inequality, for any fixed $q$ it holds that
\begin{align*}
  \p\bigg(  \bigg| \sum_{k,l} \bigg( \Sigma^{1/2}_{JJ^c}\bigg)_{ik} ( Y_{qk} Y_{ql} - \E Y_{qk} Y_{ql} ) V_{lj}\bigg| \geq t\bigg) &\leq 2 \exp\bigg( -c \min\bigg\{ \frac{t^2}{4\lambda_1 }, \frac{t}{\|M\|} \bigg\} \bigg).
\end{align*}
Setting $t = C \sqrt{\lambda_1} \sqrt{\frac{\log(s) + \log(k) + 5\log(p)}{n}}$ and taking a union bound over all $q$ random variables shows that for fixed $i$ and $j$, with probability at least $1 - O(s\inv k\inv p^{-4})$,
\begin{align*}
     \bigg| \bigg( \Sigma^{1/2}_{JJ^c} \bigg( \frac{Y_{J^c}\t Y_{J^c}}{n} - I \bigg) V \bigg)_{ij} \bigg| &\lesssim\sqrt{\lambda_1} \sqrt{\frac{\log(p)}{n}}.
\end{align*}
Taking a union bound over $s$ rows and  $k$ columns shows that with probability at least $1 - O(p^{-4})$,
\begin{align*}
    \| \Sigma^{1/2}_{JJ^c} \bigg( \frac{Y_{J^c}\t Y_{J^c}}{n} - I \bigg) V\|_{\max} &\lesssim \sqrt{\lambda_1} \sqrt{\frac{\log(p)}{n}}.
\end{align*}
Therefore, from the initial bound in \eqref{bigboybigboy} and Proposition \ref{lem:dk}, 
\begin{align*}
    \| K_4 \|_{2\to\infty} &= \| \Sigma^{1/2}_{JJ^c} \bigg( \frac{Y_{J^c}\t Y_{J^c}}{n} - I \bigg) VD U_{\perp}\t \tilde U_J\|_{2\to\infty} \\
    &\leq \sqrt{s\lambda_1} \| \Sigma^{1/2}_{JJ^c} \bigg( \frac{Y_{J^c}\t Y_{J^c}}{n} - I \bigg) V\|_{\max}  \| U_{\perp}\t \tilde U_J\| \\
    &\lesssim  \lambda_1  \sqrt{\frac{s\log(p)}{n}}\| U_{\perp}\t \tilde U_J\| \\
    &\lesssim \frac{s \log(p)}{n} \frac{ \lambda_1^2 }{\lambda_k - \lambda_{k+1}}
\end{align*}
as desired. 
\end{proof}


\section{Background Material on Orlicz Norms, Concentration, and Subspace Perturbation}\label{sec:bernstein}
Here we briefly discuss Orlicz $\psi_\alpha$ Norms and Bernstein's inequality for subexponential random variables.

The \emph{Orlicz Norm} of order $\alpha$ for a real-valued random variable $X$ is defined via
\begin{align*}
\| X \|_{\psi_{\alpha}} :=    \inf \{ t > 0: \E \exp(|X|^{\alpha}/t) \leq 1 \}.
\end{align*}
Random variables with finite $\psi_{2}$ norm are called \emph{subgaussian} and those with a finite $\psi_1$ norm are called \emph{subexponential}.  Generally speaking, if $X$ is subgaussian, then $X^2$ is subexponential and $\| X^2\|_{\psi_1} \lesssim \| X\|_{\psi_2}^2$.  One also has the ``Cauchy-Schwarz'' bound $\| XY\|_{\psi_1} \lesssim \| X\|_{\psi_2} \|Y\|_{\psi_2}$ \citep{vershynin_high-dimensional_2018}. 

For subexponential random variables, one has the following generalized Bernstein's inequality.  See Theorem 2.8.2 in \citet{vershynin_high-dimensional_2018} for the proof.
\begin{theorem}[Theorem 2.8.2 in \citet{vershynin_high-dimensional_2018}] \label{thm:genbernstein}
Let $X_1, ..., X_N$ be independent, mean zero subexponential random variables and let $a = (a_i)_{i=1}^{N}$.  Then there exists a universal constant $c >0$ such that for all $t \geq 0$, we have that
\begin{align*}
   \mathbb{P}\left\{\left|\sum_{i=1}^{N} a_{i} X_{i}\right| \geq t\right\} \leq 2 \exp \left[-c \min \left(\frac{t^{2}}{K^{2}\|a\|_{2}^{2}}, \frac{t}{K\|a\|_{\infty}}\right)\right]
\end{align*}
where $K = \max_{i} \| X_i\|_{\psi_1}$.
\end{theorem}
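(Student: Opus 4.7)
The plan is to follow the classical Chernoff-plus-moment-generating-function route. The key analytic ingredient will be a one-variable MGF bound for a mean-zero subexponential random variable, followed by tensorization via independence and an optimization over the Chernoff parameter $\lambda$. Concretely, I would first establish the following lemma: if $X$ is mean-zero with $\|X\|_{\psi_1} \le K$, then there exist absolute constants $c_0, C_0 > 0$ such that
\begin{equation*}
  \mathbb{E}\exp(\lambda X) \;\le\; \exp(C_0 \lambda^2 K^2) \qquad \text{whenever } |\lambda| \le c_0/K.
\end{equation*}
This is obtained by Taylor-expanding $e^{\lambda X}$, using $\mathbb{E}X = 0$ to kill the linear term, bounding $\mathbb{E}|X|^m \le m! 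K^m$ (the standard moment characterization of the $\psi_1$ norm), and summing the resulting geometric-type series in the regime where $|\lambda| K$ is small.

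Next I would apply this to each $a_i X_i$, whose $\psi_1$ norm is at most $|a_i|\,K$. By independence,
\begin{equation*}
  \mathbb{E}\exp\Bigl(\lambda \sum_{i=1}^N a_i X_i\Bigr) \;=\; \prod_{i=1}^N \mathbb{E}\exp(\lambda a_i X_i) \;\le\; \exp\bigl(C_0\,\lambda^2 K^2 \|a\|_2^2\bigr),
\end{equation*}
valid whenever $|\lambda|\,K\,|a_i| \le c_0$ for every $i$, i.e., for $|\lambda| \le c_0/(K\|a\|_\infty)$. Combining this with Markov's inequality in the Chernoff form gives
\begin{equation*}
  \mathbb{P}\!\left(\sum_i a_i X_i \ge t\right) \;\le\; \exp\bigl(-\lambda t + C_0 \lambda^2 K^2 \|a\|_2^2\bigr)
\end{equation*}
for $\lambda$ in that range.

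Then I would optimize over $\lambda$. The unconstrained minimizer is $\lambda^{\ast} = t/(2C_0 K^2 \|a\|_2^2)$. If $\lambda^\ast \le c_0/(K\|a\|_\infty)$ (the ``subgaussian regime''), plugging in yields the Gaussian tail $\exp(-t^2/(4C_0 K^2 \|a\|_2^2))$. Otherwise (the ``subexponential regime'') I would take $\lambda = c_0/(K\|a\|_\infty)$, which is the boundary of the admissible range; the quadratic term is then dominated by the linear term, producing the tail $\exp(-c_1 t/(K\|a\|_\infty))$ for some constant $c_1$. Writing both cases uniformly gives the $\min$-form of the exponent. A symmetric argument applied to $-\sum a_i X_i$ yields the same bound for the lower tail, and the factor of $2$ comes from the union bound over the two tails.

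I do not expect a serious obstacle here; the delicate point is just checking that the ``subexponential regime'' bound comes out correctly after truncating $\lambda$ at $c_0/(K\|a\|_\infty)$, which requires being careful that at the boundary the quadratic term is bounded by a constant multiple of the linear term. This is straightforward algebra once $\lambda$ is plugged in, and the constants can be consolidated into the single constant $c$ stated in the theorem.
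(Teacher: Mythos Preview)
Your proposal is correct and is essentially the standard Chernoff-method argument given in Vershynin's book. Note, however, that the paper itself does not supply a proof of this statement: it is included only as background material in the appendix, with the remark ``See Theorem 2.8.2 in \citet{vershynin_high-dimensional_2018} for the proof.'' Your outline matches that referenced proof.
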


We also make use of the Hanson-Wright Inequality.  See Theorem 6.2.1 in \citet{vershynin_high-dimensional_2018} for the proof.

\begin{theorem}[Hanson-Wright Inequality --Theorem 6.2.1 in \citet{vershynin_high-dimensional_2018}] \label{thm:hw}
Let $X_1, \dots, X_N$ be independent, mean-zero  subgaussian random variables.  Let $M$ be some fixed $N \times N$ matrix.  Then there exists a universal constant $c >0$ such that for all $t \geq 0$, we have that
\begin{align*}
    \p\bigg\{ \bigg| \sum_{k,l} M_{kl} X_{k} X_{l}  - \E M_{kl} X_{k} X_{l}  \bigg|  \geq t \bigg\} &\leq 2 \exp\bigg( -c \min\bigg\{ \frac{t^2}{K^4 \|M\|_F^2}, \frac{t}{K^2 \|M\|} \bigg\} \bigg),
\end{align*}
where $K = \max_i \| X_i \|_{\psi_2}$.
\end{theorem}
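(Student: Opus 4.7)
\bigskip

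\noindent\textbf{Proposal for the proof of Theorem \ref{thm:hw} (Hanson-Wright).}

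The plan is to bound the moment generating function of $Z := \sum_{k,l} M_{kl}(X_k X_l - \E X_k X_l)$ and then apply Chernoff. By homogeneity, I can assume $K = 1$; the general case follows by rescaling $X_i \mapsto X_i/K$, which pulls out a factor $K^2$ inside the exponent. The first step is to split
\begin{align*}
    Z \;=\; \underbrace{\sum_i M_{ii}(X_i^2 - \E X_i^2)}_{Z_{\mathrm{diag}}} \;+\; \underbrace{\sum_{k \neq l} M_{kl} X_k X_l}_{Z_{\mathrm{off}}},
\end{align*}
and bound each by separately applying Chernoff to $\pm t/2$ and taking a union bound. For $Z_{\mathrm{diag}}$, the summands are independent mean-zero subexponentials with $\|X_i^2 - \E X_i^2\|_{\psi_1} \lesssim \|X_i\|_{\psi_2}^2 \lesssim 1$, so the generalized Bernstein bound (Theorem \ref{thm:genbernstein}) applied with coefficients $a_i = M_{ii}$ gives the tail $2\exp(-c\min\{t^2/\|\mathrm{diag}(M)\|_2^2,\, t/\|\mathrm{diag}(M)\|_\infty\})$, which is majorized by the claimed bound since $\|\mathrm{diag}(M)\|_2 \leq \|M\|_F$ and $\|\mathrm{diag}(M)\|_\infty \leq \|M\|$.

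For $Z_{\mathrm{off}}$ the first key move is \emph{decoupling}: let $\delta_i$ be i.i.d.\ Bernoulli$(1/2)$ and let $X'$ be an independent copy of $X$. A standard decoupling identity (picking coordinates $I = \{i:\delta_i=1\}$) gives a factor-$4$ comparison of moment generating functions,
\begin{align*}
    \E \exp(\lambda Z_{\mathrm{off}}) \;\leq\; \E \exp\bigl(4\lambda \,X^\top M' X'\bigr),
\end{align*}
where $M'$ is $M$ with diagonal zeroed and with the same spectral and Frobenius norms controlled by those of $M$. This removes the dependence between the two factors inside each product $X_k X_l$. Next, I condition on $X'$: then $X^\top M' X'$ is linear in $X$ with coefficient vector $v := M' X'$, so sub-Gaussian Hoeffding yields
\begin{align*}
    \E_X \exp\bigl(4\lambda X^\top M' X'\bigr) \;\leq\; \exp\bigl(C \lambda^2 \|M' X'\|_2^2\bigr).
\end{align*}
Taking expectations over $X'$, I need to control $\E \exp(C\lambda^2 \|M' X'\|_2^2)$; this is the MGF of a quadratic form $X'^\top (M')^\top M' X'$ with parameter $C\lambda^2$, which I bound using the fact that $X'$ is sub-Gaussian with identity-bounded covariance proxy. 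A Gaussian-comparison/chaining argument (or the standard lemma that $\E \exp(s X'^\top A X') \leq \det(I-2sA)^{-1/2}$-type bound for sub-Gaussian $X'$ with small enough $s$) gives
\begin{align*}
    \E \exp(C\lambda^2 \|M' X'\|_2^2) \;\leq\; \exp\bigl(C'\lambda^2 \|M'\|_F^2\bigr)
\end{align*}
valid for $|\lambda| \leq c''/\|M'\|$, where the restriction on $\lambda$ is what produces the sub-exponential (linear in $t/\|M\|$) regime in the final bound.

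Combining the two MGF bounds, Chernoff's inequality with the standard two-regime optimization over $\lambda$ (quadratic regime $\lambda \asymp t/\|M\|_F^2$ when $t \lesssim \|M\|_F^2/\|M\|$, and linear regime $\lambda \asymp 1/\|M\|$ otherwise) delivers exactly $\exp(-c \min\{t^2/\|M\|_F^2,\, t/\|M\|\})$, and restoring the constant $K$ replaces $\|M\|_F^2$ and $\|M\|$ with $K^4\|M\|_F^2$ and $K^2\|M\|$. The main obstacle is the conditional quadratic-form MGF bound: showing that a sub-Gaussian (not Gaussian) $X'$ still yields $\E\exp(s\|M'X'\|_2^2) \leq \exp(Cs\|M'\|_F^2)$ for small $s$ requires either a careful diagonalization of $(M')^\top M'$ coupled with a one-dimensional sub-Gaussian squared-norm estimate, or an appeal to a Gaussian comparison via rotational invariance — this is the step where the correct scaling with $\|M'\|_F$ (not the worse $\|M'\| \cdot \mathrm{rank}$) has to be extracted and is where Hanson and Wright's original ingenuity lives.
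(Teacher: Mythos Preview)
The paper does not prove this theorem itself; it is stated as a background result with the proof deferred to Theorem 6.2.1 of \citet{vershynin_high-dimensional_2018}. Your sketch follows exactly the strategy used there---splitting into diagonal and off-diagonal parts, handling the diagonal via Bernstein, decoupling the off-diagonal into a bilinear form in independent copies, conditioning to reduce to a sub-Gaussian linear tail, and then controlling the MGF of $\|M'X'\|_2^2$ for small $\lambda$---so there is nothing further to compare beyond noting that your outline matches the cited reference.
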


We also use several notions from subspace perturbation theory.  Suppose $U$ and $\hat U$ are two $d_1 \times d_2$ matrices with orthonormal columns with $d_2 \leq d_1$.  The $\sin\Theta$ distance between the subspaces spanned by $U$ and $\hat U$ is defined as follows.  Let $I - UU\t = U_{\perp} U_{\perp}\t$.  Then the (spectral) $\sin\Theta$ distance is defined as
\begin{align*}
    \| \sin\Theta(U_1, U_2) \| :&= \| \hat U\t U_{\perp}\|.
\end{align*}
Throughout the supplementary material, we use several equivalent terms for the $\sin\Theta$ distance.  We present this here as a lemma, the statement of which is slightly modified from Lemma 1 of \citet{cai_rate-optimal_2018}.  

\begin{lemma}[Modified from Lemma 1 of \citet{cai_rate-optimal_2018}] \label{lem:sintheta}
The $\sin\Theta$ distance between two matrices satisfies
\begin{align*}
    \| \sin\Theta(\hat U, U)\| \leq \inf_{W: WW\t = I_{d_2}} \| \hat U - U W \| \leq  \sqrt{2} \| \sin\Theta(\hat U, U)\|; \\
    \| \sin\Theta(\hat U, U) \| \leq \| \hat U \hat U\t - UU\t \| \leq 2 \|\sin\Theta(\hat U, U) \|.\end{align*}
\end{lemma}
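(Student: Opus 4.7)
The plan is to diagonalize the cross-Gram matrix $U^\top \hat U$ and read the principal angles off its singular values. Write the compact SVD $U^\top \hat U = V_1 C V_2^\top$, where $C = \diag(\cos\theta_1,\dots,\cos\theta_{d_2})$ and the $\theta_i$ are the principal angles between the column spans of $U$ and $\hat U$. By the CS decomposition of the orthogonal matrix $[U, U_\perp]^\top [\hat U, \hat U_\perp]$, the off-diagonal blocks $U^\top \hat U_\perp$ and $U_\perp^\top \hat U$ both have singular values $\sin\theta_i$, so $\|\sin\Theta(\hat U, U)\| = \|U_\perp^\top \hat U\| = \|U^\top \hat U_\perp\| = \max_i|\sin\theta_i|$. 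This symmetric characterization is the one tool I will use throughout.

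For the Procrustes chain, the idea is to test with the Frobenius-optimal rotation $W_* := V_1 V_2^\top$. A direct expansion gives $(\hat U - U W_*)^\top (\hat U - U W_*) = 2 V_2 (I - C) V_2^\top$, whose spectral norm equals $2(1 - \cos\theta_{\max})$. The elementary trig identity $2(1-\cos\theta) = 2\sin^2\theta /(1+\cos\theta) \leq 2\sin^2\theta$ (valid since $\cos\theta \in [0,1]$) then yields $\inf_W \|\hat U - U W\| \leq \|\hat U - U W_*\| \leq \sqrt{2}\|\sin\Theta\|$. For the matching lower bound, I would use that for any orthogonal $W$ the projection $I - UU^\top$ is norm-contractive and annihilates $U$, so $\|\hat U - U W\| \geq \|(I - UU^\top)(\hat U - U W)\| = \|(I - UU^\top)\hat U\| = \|U_\perp^\top \hat U\| = \|\sin\Theta\|$.

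For the projection chain, the key is the decomposition $UU^\top - \hat U \hat U^\top = UU^\top(I - \hat U\hat U^\top) - (I - UU^\top)\hat U\hat U^\top = UU^\top \hat U_\perp \hat U_\perp^\top - U_\perp U_\perp^\top \hat U \hat U^\top$. Each term's spectral norm simplifies via $\|AA^\top B B^\top\| = \|A^\top B\|$ (when $A,B$ have orthonormal columns) to $\|U^\top \hat U_\perp\| = \|\sin\Theta\|$ and $\|U_\perp^\top \hat U\| = \|\sin\Theta\|$ respectively, so the triangle inequality delivers the upper bound $\|UU^\top - \hat U \hat U^\top\| \leq 2\|\sin\Theta\|$. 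The lower bound follows by premultiplying with $U_\perp^\top$: since $U_\perp^\top U = 0$, $\|UU^\top - \hat U\hat U^\top\| \geq \|U_\perp^\top(UU^\top - \hat U\hat U^\top)\| = \|U_\perp^\top \hat U\hat U^\top\| = \|U_\perp^\top \hat U\| = \|\sin\Theta\|$.

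No step here presents a genuine obstacle, as this is a classical bookkeeping result in subspace perturbation theory \citep{bhatia_matrix_1997,g._w._stewart_matrix_1990}. The only mildly subtle point is the symmetry $\|U^\top \hat U_\perp\| = \|U_\perp^\top \hat U\|$, which I would simply invoke from the CS decomposition rather than re-prove. Notably, the constants $\sqrt{2}$ and $2$ appearing above are both loose with respect to the sharp spectral-norm identities (one can show $\|UU^\top - \hat U\hat U^\top\| = \|\sin\Theta\|$ outright), but they suffice for every invocation of this lemma in the main text.
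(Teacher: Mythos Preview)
Your proof is correct and self-contained. However, note that the paper does not actually supply a proof of this lemma; it is stated in Appendix~\ref{sec:bernstein} as background material and attributed to Lemma~1 of \citet{cai_rate-optimal_2018}. There is therefore no paper proof to compare against, and your argument stands on its own as the standard principal-angles derivation via the CS decomposition and the Frobenius-optimal Procrustes alignment. Your closing remark that the constants $\sqrt{2}$ and $2$ are loose is also correct (in particular $\|UU^\top - \hat U\hat U^\top\| = \|\sin\Theta(\hat U,U)\|$ holds exactly in spectral norm), and indeed the paper only ever uses this lemma to pass between equivalent $\sin\Theta$ expressions up to constants.
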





\end{document}